\theoremstyle{theorem}
\newtheorem{theorem}{Theorem}[section]
\newtheorem{definition}[theorem]{Definition}
\newtheorem{proposition}[theorem]{Proposition}
\newtheorem{corollary}[theorem]{Corollary}
\newtheorem{lemma}[theorem]{Lemma}
\newtheorem{remark}[theorem]{Remark}
\numberwithin{equation}{section}
\newcommand{\R}{\mathbb R}
\newcommand{\N}{\mathbb N}
\newcommand{\Z}{\mathbb Z}
\def\S{{\mathcal S}}
\newcommand{\supp}{\text{supp}\,}
\begin{document}

\title[On the continuity of  strongly singular CZ on $H^p$]{On the continuity of  strongly singular Calder\'on-Zygmund-type operators on Hardy spaces}%

\author {Claudio Vasconcelos}
\address{Departamento de Matem\'atica, Universidade Federal de S\~ao Carlos, S\~ao Carlos, SP, 13565-905, Brazil}
\email{claudio.vasconcelos@estudante.ufscar.br}

\author {Tiago Picon}
\address{Departamento de Computa\c{c}\~ao e Matem\'atica, Universidade S\~ao Paulo, Ribeir\~ao Preto, SP, 14040-901, Brazil}
\email{picon@ffclrp.usp.br}

\thanks{Work supported in part by CAPES, CNPq (grant 311430/2018-0) and FAPESP (grant 18/15484-7)}

\subjclass{42B20; 42B30; 35S05}


\keywords{Calder\'on-Zygmund operators; Hardy spaces; Molecules; Muckenhoupt weight; Pseudodifferential operators; }

\maketitle

\begin{abstract}
	In this work, we establish results on the continuity of strongly singular Calder\'on-Zygmund operators of type $\sigma$  on Hardy spaces $H^p(\R^n)$ for $0<p\leq 1$ assuming a weaker $L^{s}-$type H\"ormander condition on the kernel. Operators of this type include appropriated classes of pseudodifferential operators $OpS^{m}_{\sigma,b}(\R^n)$ and operators associated to standard $\delta$-kernels of type $\sigma$ introduced by \'Alvarez and Milman. As application, we show that strongly singular Calder\'on-Zygmund operators are bounded from $H^{p}_{w}(\R^n)$ to $L^{p}_{w}(\R^n)$, where $w$ belongs to a special class of Muckenhoupt weight.
\end{abstract}

\section{Introduction}

Motivated by the studies of a special class of pseudodifferential operators, J. \'Alvarez and M. Milman in {\cite[Definition 2.1]{AlvarezMilman}} introduced a new class of Calder\'on-Zygmund operators associated to $\delta$-kernels of type $\sigma$, which are continuous functions away the diagonal on $\R^{2n}$ that satisfy
\begin{equation} \label{pontual_kernel_tipo_sigma}
	|K(x,y)-K(x,z)|+|K(y,x)-K(z,x)| \leq C \dfrac{|y-z|^{\delta}}{|x-z|^{n+ \frac{\delta}{\sigma}}},
\end{equation}
for all $|x-z| \geq 2|y-z|^{\sigma}$ some $0<\delta\leq1$ and $0<\sigma <1$. A linear and bounded operator $T: \mathcal{S}(\R^{n}) \rightarrow \mathcal{S}'(\R^{n})$ is called a \textit{strongly singular Calder\'on-Zygmund operator} if it is associated to a $\delta$-kernel of type $\sigma$ in the sense 
$$
\langle Tf,g \rangle = \int{\int{K(x,y)f(y)g(x)dy}dx}, \ \ \forall \ f,g \in \mathcal{S}(\R^n) \mbox{ with disjoint supports,} 
$$
has bounded extension from $L^2(\R^n)$ to itself and in addition $T$ and $T^{\ast}$ extend to a continuous operator from $L^q(\R^n)$ to $L^2(\R^n)$ where 
\begin{equation*}
	\dfrac{1}{q} = \dfrac{1}{2} + \dfrac{\beta}{n} \quad \text{for some} \,\,(1-\sigma) \dfrac{n}{2} \leq \beta < \dfrac{n}{2}.
\end{equation*}

These non-convolution operators are a natural extension of weakly-strongly singular integrals of convolution-type which were studied in the works \cite{Fefferman1970, FeffermanStein1972, Hirschman1959, Wainger1965}. The assumption $L^q-L^2$ continuity is motivated by convolution operators associated to kernels of type $\sigma$ that satisfy the uniform pointwise control  $|\widehat{K}(\xi)|\lesssim\footnote{{The notation $f \lesssim g $ means that there exists a constant $C>0$ such that $f(x)\leq C g(x)$ for all $x \in \R^n$}}(1+|\xi|)^{-\beta}$ for $\beta>0$ (see for instance \cite[Theorem 1]{FeffermanStein1972} with $\beta=n\sigma/2$). Thus, this assumption can be understood  as a suitable correction of the $L^{2}$ continuity due to action of  kernels of type $\sigma$ which are naturally more singular at the diagonal; this justify the nomenclature \textit{strongly singular integral operators}.


A natural question arises on investigating the boundedness on Hardy spaces $H^{p}(\R^n)$ for $0<p\leq 1$  of operators associated to weaker $\delta-$kernels of type $\sigma$ given by H\"ormander condition
\begin{equation} \label{hormander}
	\int_{|x-z| \geq 2|y-z|^{\sigma}}|K(x,y)-K(x,z)|+|K(y,x)-K(z,x)|dx \leq C \ 2^{-\delta}.
\end{equation}
This question for strongly singular type operators is still open in general, moreover it is known that for non-convolution operators and $\sigma=1$ the condition \eqref{hormander} is not sufficient to obtain boundedness on $H^{1}(\R^n)$, even provided $L^{2}(\R^n)$ continuity (see \cite[Theorem 2]{YangYanDeng-Hormandercondition}). Clearly $\delta-$kernels of type $\sigma$ satisfy \eqref{hormander}.    

In this work we continue the program proposed by J. \'Alvarez and M. Milman in \cite{AlvarezMilman} on strongly singular Calder\'on-Zygmund-type operators presenting news perspectives for continuity on Hardy spaces $H^{p}(\R^n)$ for $0<p\leq 1$ assuming a
$L^s-$ type H\"ormander condition of the associated kernel. We say that a kernel $K(x,y)$ associated to $T$ satisfies the $D_{s}$ condition for $s \geq 1$ if
\begin{equation} \label{s-hormander-1}
	\displaystyle{\left( \int_{C_j(z,r)}{|K(x,y)-K(x,z)|^s+|K(y,x)-K(z,x)|^sdx} \right)^{\frac{1}{s}} \lesssim |C_j(z,r)|^{\frac{1}{s}-1} \, 2^{-j\delta}} 
\end{equation}
for $r>1$ and
\begin{align} \label{s-hormander-2}
	&\left( \int_{C_j(z,r^{\rho})}{|K(x,y)-K(x,z)|^s+|K(y,x)-K(z,x)|^sdx} \right)^{\frac{1}{s}} \nonumber  \\  
	& \quad \quad \quad \quad \quad \quad \quad \quad \quad \quad \quad \quad \quad \quad \quad \quad \lesssim |C_j(z,r^{\rho})|^{\frac{1}{s}-1+\frac{\delta}{n} \left(\frac{1}{\rho}-\frac{1}{\sigma}  \right)}  2^{-\frac{j\delta}{ \rho}} 
\end{align}
for $r<1$, where  $C_j(z,\tilde{r})=\{ x \in \R^n: \ 2^j \tilde{r}<|x-z| \leq 2^{j+1}\tilde{r} \}$, $0< \rho \leq \sigma \leq 1$, $z \in \R^n$ and $|y-z|<r$. It is easy to verify that $D_{s_{1}}$ condition is stronger than a $D_{s_{2}}$  if $s_{1}>s_{2}$. We remark that a simple computation shows that $\delta-$kernels of type $\sigma$ satisfy $D_{s}$ conditions for every $1 \leq s<\infty$. {By simplicity, we use the nomenclature $D_s$ condition omitting the dependence of $\delta$. On the other hand, as necessary we emphasize that the kernel satisfies the $D_s$ condition with decay $\delta$ (see for instance Proposition \ref{exemplo_pseudo_2}).}

Estimates of this type are slightly different of $D_{s,\alpha}$ conditions presented in \cite[Definition 1.1]{DeFrancia19867} and they are naturally related to distributional kernels associated to some classes of  pseudodifferential operators in the H\"ormander class $OpS^{m}_{\sigma,b}(\R^n)$ with $0<\sigma \leq 1$ and $0\leq b <1$ (see \cite[Theorem 5.1]{AlvarezHounie}). In particular, {it has been shown that} if $b\leq \sigma$ and $m\leq -n(1-\sigma)/2$, the kernel $K(x,y)$ associated to $T \in OpS^{m}_{\sigma,b}(\R^n)$ satisfies the $D_{1}$ condition i.e., for $r>1$
\begin{equation} \label{s-hormander-1a}
	\displaystyle{\int_{C_j(z,r)}{|K(x,y)-K(x,z)|+|K(y,x)-K(z,x)|dx}  \lesssim \, 2^{-j\delta}, \quad \text{for} \quad r>1,}
\end{equation}
and for $r<1$
\begin{align} \label{s-hormander-2a}
	\int_{C_j(z,r^{\rho})}{|K(x,y)-K(x,z)|+|K(y,x)-K(z,x)|dx} \lesssim \, |C_j(z,r^{\rho})|^{\frac{\delta}{n} \left(\frac{1}{\rho}-\frac{1}{\sigma}  \right)} \, 2^{-\frac{j\delta}{ \rho}}.
\end{align}
{The previous $L^1-$ type H\"ormander conditions  \eqref{s-hormander-1a} and \eqref{s-hormander-2a}
are slightly stronger then \eqref{hormander} and it is still an open question the boundedness on $H^{1}(\R^n)$ from it. Integral estimates such as \eqref{s-hormander-1} and \eqref{s-hormander-2} supply a weaker condition compared to \eqref{pontual_kernel_tipo_sigma}, which has been extensively studied for convolution and non-convolution standard Calder\'on-Zygmund operators (see \cite[p. 315]{GarciaFranciaWeighted} and \cite[p. 23]{MeyerCoifman1997}).}   

Our first result is concerning the continuity of strongly singular Calder\'on-Zygmund operators on $H^p(\R^n)$ satisfying the $D_s$ condition, with emphasis on the relation of the parameters involved (previously omitted for $\delta-$kernels of type $\sigma$). The result is the following:


\begin{theorem} \label{teorema_s-hormander}
	Let $T: \mathcal{S}(\R^{n}) \rightarrow \mathcal{S}'(\R^{n})$ be a bounded linear operator and suppose that:
	\begin{enumerate}
		\item[\textnormal{(i)}] $T$ extends to a continuous operator from $L^2(\R^n)$ to itself;
		\item[\textnormal{(ii)}] $T$ is associated to a kernel satisfying the $D_{s_1}$ condition; 
		\item[\textnormal{(iii)}] For some $n(1-\sigma) \left( 1- \dfrac{1}{s_2} \right) \leq \beta < n\left( 1- \dfrac{1}{s_2} \right)$, $T$ extends to a continuous operator from $L^q(\R^n)$ to $L^{s_2}(\R^n)$, where $\dfrac{1}{q} = \dfrac{1}{s_2} + \dfrac{\beta}{n}$ and $s_{2}>1$.
	\end{enumerate}
	Under such conditions, if $T^{\ast}(x^{\alpha})=0$ for all $\alpha \in \Z^{n}_{+}$ such that $|\alpha| \leq \lfloor \delta \rfloor$, $1< s_1 \leq 2$ and $s_1 \leq s_2$, then the operator $T$ can be extended to a bounded operator from $H^p(\R^n)$ to itself for $p_{_0}<p \leq 1$, where
	\begin{equation}\label{pcritico}
		\frac{1}{p_{_0}}:= \dfrac{1}{s_2}+\dfrac{\beta \left[ \dfrac{\delta}{\sigma}+n\left(1-\dfrac{1}{s_2}\right)\right]}{n\left(\dfrac{\delta}{\sigma}-\delta+\beta\right)} \ .
	\end{equation}
	Moreover, if $T^{*}$ also satisfies (iii) then the conclusion holds for $1<s_{1}<\infty$ and $s_{1} \leq s_{2}$. The case $s_{1}=1$ also holds, however only for the range $p_{_0}<p<1$.
\end{theorem}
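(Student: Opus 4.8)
The plan is to proceed through the atomic and molecular theory of $H^p(\R^n)$. By density it suffices to produce a uniform bound $\|Ta\|_{H^p}\le C$ for an arbitrary $(p,s_2)$-atom $a$, supported on a ball $B=B(x_0,r)$ with $\|a\|_{L^{s_2}}\le|B|^{1/s_2-1/p}$ and $\int x^\alpha a(x)\,dx=0$ for $|\alpha|\le\lfloor n(1/p-1)\rfloor$; the passage from such uniform control to a bounded extension on all of $H^p$ is standard, using that $T$ is already well defined and controlled on the span of atoms via hypotheses (i) and (iii). After translating and dilating one may take $x_0=0$, but the first structural point is that the problem is \emph{not} scale invariant, since the kernel conditions \eqref{s-hormander-1}–\eqref{s-hormander-2} are governed by the exponent $\sigma$: the cases $r\ge 1$ and $r<1$ must be treated separately, the latter being the genuinely strongly singular regime in which the effective influence of the atom spreads out to the dilated ball $B(0,r^{\sigma})$.

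The core of the argument is to show that $Ta$ is, up to the uniform constant $C$, an $H^p$-molecule centered at the origin and adapted to the $\sigma$-scaling, and then to invoke the molecule-implies-$H^p$ estimate. The moment conditions $\int x^\alpha\,Ta(x)\,dx=0$ for $|\alpha|\le\lfloor\delta\rfloor$ follow at once from the hypothesis $T^{\ast}(x^\alpha)=0$ together with the support and cancellation of $a$; matching this against the order $\lfloor n(1/p-1)\rfloor$ demanded by the molecular theory already forces the constraint $p>n/(n+\delta)$, which is one of the restrictions encoded in \eqref{pcritico}. For the size part one controls the two quantities whose weighted geometric mean is the molecular norm: the global norm $\|Ta\|_{L^{s_2}(\R^n)}$, which is immediate since an atom lies in $L^2\cap L^q$ with the expected norms and hence $Ta\in L^2\cap L^{s_2}$ by (i) and (iii); and a weighted tail of the form $\big\||x|^{\kappa}\,Ta\big\|$ for a suitable exponent $\kappa=\kappa(p,s_2,\beta,\delta,\sigma)$.

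To estimate the weighted tail one splits $\R^n$ into a local region and dyadic coronas $C_j$. When $r\ge 1$, the local region is a fixed dilate of $B$, handled by the $L^2\to L^2$ bound (or $L^q\to L^{s_2}$) followed by Hölder; on each corona $C_j(0,r)$, $j\ge 1$, one subtracts from $K(x,\cdot)$ its Taylor polynomial of degree $\lfloor\delta\rfloor$ at the origin, uses $\int y^\alpha a(y)\,dy=0$, and applies \eqref{s-hormander-1} together with Hölder in $x$ over $C_j$; the resulting series in $j$ converges and produces the correct power of $r$. When $r<1$ there is, in addition, a transitional corona $2r\lesssim|x|\lesssim r^{\sigma}$, outside $\supp a$ but too close for the kernel estimate: this is absorbed via the smoothing hypothesis (iii), writing $\|Ta\|_{L^{s_2}(|x|\sim R)}\le\|Ta\|_{L^{s_2}}\lesssim\|a\|_{L^q}$ and paying the measure of the region — precisely where the gain $r^{\beta}$ supplied by (iii) is spent — while for $|x|\gtrsim r^{\sigma}$ one reruns the corona argument with \eqref{s-hormander-2} in place of \eqref{s-hormander-1}. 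The free parameter $0<\rho\le\sigma$ in \eqref{s-hormander-2} and the exponent in the molecular norm are then chosen to optimize the bound, and $p_{_0}$ in \eqref{pcritico} is exactly the threshold at which the geometric series over the far coronas ceases to converge, equivalently at which the weighted tail ceases to be finite.

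The main obstacle, and the place where the hypotheses $1<s_1\le 2$, $s_1\le s_2$, and the symmetric assumption on $T^{\ast}$ enter, is reconciling integrability exponents: the kernel conditions \eqref{s-hormander-1}–\eqref{s-hormander-2} are $L^{s_1}$ statements, whereas the molecular norm is to be measured at the level of $L^{s_2}$, with $s_2\ge s_1$, so on a finite-measure corona Hölder runs the wrong way and one must interpolate the corona estimates against an a priori bound of larger exponent. For $s_1\le 2$ the $L^2$ bound of hypothesis (i) supplies exactly this, which is why only the mapping property (iii) for $T$ is required; for $s_1>2$ one must instead interpolate against the $L^q\to L^{s_2}$ estimate for $T^{\ast}$ (equivalently, a bound for $T$ with integrability exponent beyond $s_2$), whence the extra assumption in the ``moreover'' part. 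Finally, the endpoint $s_1=1$ degenerates at $p=1$: there the interpolation step has no room to spare and the relevant molecular integral is only conditionally convergent, which leaves the open range $p_{_0}<p<1$.
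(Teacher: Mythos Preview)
Your overall architecture is correct and matches the paper (atoms to generalized molecules, split $r\ge1$ versus $r<1$, corona decomposition of the tail via the $D_{s_1}$ condition, optimize the dilation parameter $\rho$), but you have misidentified where the exponent constraints enter. The molecule is \emph{not} measured in $L^{s_2}$: in the paper's Definition~\ref{s-molecule} the integrability parameter is $t=s_1$, so conditions (M1)--(M4) are $L^{s_1}$ statements and the corona bounds coming from the $D_{s_1}$ condition plug in directly, with no interpolation whatsoever on the tail. The constraint $1<s_1\le 2$ is used in the \emph{local} estimate, not the tail: for $r>1$ one needs $\int_{B(z,2r)}|Ta|^{s_1}\,dx\lesssim r^{n(1-s_1/p)}$, which follows from $\|Ta\|_{L^2}\lesssim\|a\|_{L^2}$ by H\"older on the ball precisely when $s_1\le 2$. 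For $s_1>2$ that H\"older step fails and one instead needs $T$ bounded on $L^{s_1}$; the paper obtains this from the $L^\infty\to BMO$ bound of Theorem~\ref{theorem-Linf-BMO} (which is where the hypothesis on $T^\ast$ is spent) interpolated with $L^2$, cf.\ Remark~\ref{1.2.a}. Likewise $s_1\le s_2$ is used only to H\"older the $L^q\to L^{s_2}$ bound down to $L^{s_1}$ on the local ball $B(z,2r^\rho)$ when $r<1$. Your proposed ``interpolation of corona estimates against a larger-exponent a~priori bound'' would not work as stated: the global $L^2$ control of $Ta$ does not localize to a single corona $C_j$ with the required decay in $j$.

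Two smaller corrections. No Taylor polynomial of the kernel is subtracted in this theorem: the $D_{s_1}$ condition involves only $K(x,y)-K(x,z)$, and on the coronas one writes $Ta(x)=\int_B[K(x,y)-K(x,z)]a(y)\,dy$ using only the zeroth moment of the atom; higher-order Taylor remainders belong to the derivative version, Theorem~\ref{theorem-derivative-condition}. And the exclusion of $p=1$ when $s_1=1$ is not about conditional convergence: the molecular decomposition (Lemma~\ref{molecular_s_decomposition}) outputs $(p,t)$-atoms with $t=s_1$, and $(1,1)$-``atoms'' do not characterize $H^1$, so the lemma explicitly requires $t>1$ when $p=1$.
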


{This result extends \cite[Theorem 2.2]{AlvarezMilman} with additional advantage of considering boundedness of operators of type $\sigma$ on $H^p(\R^n)$ associated to kernels with weaker integral conditions. In addition, our approach enables us to include the $D_{1}$ condition for $p<1$, which represents the closest of H\"ormander condition we can get so far. In contrast to condition \eqref{pontual_kernel_tipo_sigma}, although any upper bound on $\delta$ is assumed on \eqref{s-hormander-1} and  \eqref{s-hormander-2}, naturally examples with $\delta>1$ are attended with suitable refinement of weaker integral conditions incorporating derivatives of the kernel (see Section \ref{section-derivative-conditions}). An application of Theorem \ref{teorema_s-hormander} is presented at Proposition \ref{exemplo_pseudo_2}. A particular weaker class of kernels of type $\sigma$ is presented at Section \ref{section-yabuta-operators}.}


The proof of Theorem \ref{teorema_s-hormander} follows by showing that $T$ maps $(p,\infty)$-atoms into an appropriate notion of  molecules associated to \eqref{s-hormander-1} and \eqref{s-hormander-2}. The molecules will be presented  in Section \ref{section-molecules}. 

The assumption on the $L^q-L^{s}$ boundedness of $T^{*}$ in the Theorem \ref{teorema_s-hormander} guarantees that $T$ is continuous from $L^\infty(\R^n)$ to $BMO(\R^n)$.

\begin{theorem} \label{theorem-Linf-BMO}
	Let $T$ be an operator satisfying (i) and (iii) as in Theorem \ref{teorema_s-hormander} with kernel satisfying the $D_1$ condition. Assume also that (iii) holds for $T^{\ast}$. Then $T$ is continuous from $L^{\infty}(\R^n)$ to $BMO(\R^n)$.
\end{theorem}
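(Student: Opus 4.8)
The plan is to show that for every $f \in L^\infty(\R^n)$ and every cube $Q = Q(x_0,r)$, there is a constant $c_Q$ with $\frac{1}{|Q|}\int_Q |Tf(x) - c_Q|\,dx \lesssim \|f\|_{L^\infty}$, with implied constant independent of $Q$ and $f$. Following the classical Peetre/Fefferman--Stein scheme, I would split $f = f_1 + f_2$ where $f_1 = f\,\chi_{2Q^*}$ and $f_2 = f\,\chi_{(2Q^*)^c}$, but \emph{with a twist dictated by the type-$\sigma$ geometry}: because the cancellation/kernel estimates in \eqref{s-hormander-1} and \eqref{s-hormander-2} switch behavior at scale $r = 1$, the size of the enlarged cube $Q^*$ on which we localize must be chosen as $r^{\sigma}$ (or a comparable power) rather than $r$ when $r < 1$, mirroring the appearance of $C_j(z,r^\rho)$ in the hypotheses. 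With this choice, $f_1$ is handled by the hypothesis (iii): since $f_1 \in L^\infty$ has compact support, it lies in $L^q(\R^n)$ with $\|f_1\|_{L^q} \lesssim |Q^*|^{1/q}\|f\|_{L^\infty}$, so $T$ maps it into $L^{s_2}$ and by H\"older on $Q$,
\[
\frac{1}{|Q|}\int_Q |Tf_1| \,dx \le |Q|^{-1}\,|Q|^{1-1/s_2}\,\|Tf_1\|_{L^{s_2}} \lesssim |Q|^{-1/s_2}\,|Q^*|^{1/q}\,\|f\|_{L^\infty},
\]
and one checks that the relation $1/q = 1/s_2 + \beta/n$ together with the admissible range of $\beta$ and the choice $|Q^*|\sim r^{n}$ (for $r\ge1$) or $|Q^*|\sim r^{n\sigma}$ (for $r<1$) makes this $\lesssim \|f\|_{L^\infty}$; the exponent bookkeeping is exactly the point where the lower bound $\beta \ge n(1-\sigma)(1-1/s_2)$ gets used.

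For the far piece $f_2$, choose $c_Q = T f_2(x_0)$ (interpreted via the kernel representation, which is legitimate since $f_2$ vanishes near $x_0$). Then for $x \in Q$,
\[
|Tf_2(x) - Tf_2(x_0)| \le \int_{(2Q^*)^c} |K(x,y) - K(x_0,y)|\,|f(y)|\,dy \le \|f\|_{L^\infty}\int_{|y-x_0|\ge 2r^{\gamma}} |K(x,y)-K(x_0,y)|\,dy,
\]
where $\gamma = 1$ if $r\ge1$ and $\gamma=\sigma$ (or the relevant $\rho$) if $r<1$. Decomposing the domain into the annuli $C_j(x_0, r)$ or $C_j(x_0,r^\sigma)$ and summing, the $D_1$ estimates \eqref{s-hormander-1a} and \eqref{s-hormander-2a} give a geometric series in $2^{-j\delta}$ (resp.\ $2^{-j\delta/\rho}$), hence a bound $\lesssim \|f\|_{L^\infty}$ uniformly in $Q$. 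Here one must be a little careful that the roles of the two variables in $K$ match the hypotheses: since we are differencing in the \emph{first} slot of $K(x,y)$, it is the second summand $|K(y,x)-K(z,x)|$ in \eqref{s-hormander-1a}--\eqref{s-hormander-2a} that is in force, with the dummy variable $x$ there playing the role of our integration variable $y$. Averaging $|Tf_2(x)-c_Q|$ over $x\in Q$ then costs nothing.

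The main obstacle, and the step I would spend the most care on, is the \emph{scale splitting}: verifying that a single choice of enlargement exponent ($\gamma=1$ versus $\gamma=\sigma$, and the transitional regime where $r$ is of order $1$) makes both the local estimate (via (iii)) and the tail estimate (via $D_1$) close simultaneously, and that the constants do not blow up as $r \to 1$ from either side. This is precisely where the constraint $\beta \ge n(1-\sigma)(1-1/s_2)$ — i.e. the "strong singularity" correction — is not merely convenient but necessary: without it the power of $|Q^*|$ produced by the $L^q \to L^{s_2}$ mapping overwhelms the $|Q|^{-1/s_2}$ normalization on small cubes. The hypothesis that (iii) holds for $T^*$ as well is used implicitly to know that $T$ is genuinely defined on $L^\infty$ with values in $BMO$ as a bounded operator (equivalently, to make the pairing $\langle Tf, a\rangle$ meaningful against $L^1$-atoms $a$ and to identify the two descriptions of $Tf$); alternatively one phrases the conclusion via the duality $(H^1)^* = BMO$ and Theorem \ref{teorema_s-hormander} applied to $T^*$. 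I would include a remark indicating that this duality route gives the cleanest proof: $T^*$ maps $H^1$ to $H^1 \subset L^1$ by Theorem \ref{teorema_s-hormander} (the case $s_1 = 1$, $p=1$ being excluded there, but $D_1 \Rightarrow D_{s_1}$ for any $s_1>1$ so the $p=1$ endpoint \emph{is} available for operators satisfying $D_1$ with both $T$ and $T^*$ satisfying (iii)), hence its adjoint $T$ maps $(L^1)^* \supset L^\infty$ boundedly into $(H^1)^* = BMO$.
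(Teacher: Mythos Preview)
Your decomposition and the treatment of the far piece $f_2$ match the paper's argument. The gap is in the local piece $f_1$: you invoke hypothesis (iii) for $T$ itself, i.e.\ $T:L^q\to L^{s_2}$, but the exponents do not close. For $r\ge 1$ with $|Q^*|\sim r^n$ your bound $|Q|^{-1/s_2}|Q^*|^{1/q}\sim r^{\beta}$ is unbounded; for $r<1$ with $|Q^*|\sim r^{n\sigma}$ you obtain $r^{\,n(\sigma/q-1/s_2)}$, and requiring $\sigma/q\ge 1/s_2$ rearranges to $\beta\ge n(1-\sigma)/(\sigma s_2)$, which is \emph{strictly stronger} than the assumed $\beta\ge n(1-\sigma)(1-1/s_2)$ whenever $\sigma(s_2-1)<1$ (in particular for every $s_2\le 2$ with $\sigma<1$). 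The paper's fix is to use (iii) for $T^{\ast}$: by duality this gives $T:L^{s_2'}\to L^{q'}$, and then H\"older on $B(z,r)$ with exponents $(q,q')$ together with $\|f_1\|_{L^{s_2'}}\lesssim r^{\,n\sigma/s_2'}\|f\|_{L^\infty}$ produces the exponent $1/q+\sigma/s_2'-1=\beta/n-(1-\sigma)(1-1/s_2)\ge 0$, which is \emph{exactly} the hypothesized lower bound. For $r>1$ the paper uses the $L^2$ boundedness (i), not (iii). So the assumption (iii) on $T^{\ast}$ is not a well-definedness technicality as you suggest; it is what supplies the correct mapping property $T:L^{s_2'}\to L^{q'}$ that makes the near-piece estimate work.

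Your alternative duality route also has problems. The claim ``$D_1\Rightarrow D_{s_1}$ for any $s_1>1$'' is backwards: the paper notes that $D_{s_1}$ implies $D_{s_2}$ for $s_1>s_2$, so $D_1$ is the \emph{weakest} of these conditions. And even ignoring that, the endpoint $p=1$ with $s_1=1$ is explicitly excluded both in Theorem~\ref{teorema_s-hormander} and in Theorem~\ref{th5.2}, so from the $D_1$ hypothesis alone you cannot extract $T^{\ast}:H^1\to L^1$ via those results and then dualize.
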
 

The statement is analogous to \cite[Theorem 2.1]{AlvarezMilman}  announced for kernels satisfying  \eqref{pontual_kernel_tipo_sigma}.
The assumption on $T^{*}$ satisfying (iii) can be weakened (see Section \ref{comment}) in the spirit of
\cite[Corollary 3.3]{AlvarezMilmanVectorValued} for vector valued operators assuming the $D_{1,\alpha}-$ condition   \cite[Definition 1.1]{DeFrancia19867}).
The next remark will be fundamental in the complement of Theorem \ref{teorema_s-hormander}.

\begin{remark}\label{1.2.a}
	As a direct consequence of Theorem \ref{theorem-Linf-BMO} and real Interpolation Theorem (see \cite[Theorem 6.8]{JavierFourier}), $T$ is a bounded operator from $L^p(\R^n)$ to itself for $2\leq p <\infty$.    
\end{remark}

The critical case $p=p_{_0}$ at Theorem  \ref{teorema_s-hormander} remains open, however, the conclusion continues to hold if we replace the target space by $L^{p}(\R^n)$. Our third result is a weighted continuity version for a special class of Muckenhoupt weight $A_{1}$.

\begin{theorem}\label{th5.2}
	Let $T: \mathcal{S}(\R^{n}) \rightarrow \mathcal{S}'(\R^{n})$ be a linear and bounded operator as in Theorem \ref{teorema_s-hormander}. Then, $T$ can be extended to a bounded operator from $H_{w}^{p}(\R^n)$ to $L_{w}^{p}(\R^n)$  for $p_{_0} \leq p\leq 1$, with $p_{_0}$ given by \eqref{pcritico}, provided $w \in A_1 \cap RH_{d}$ in which $d=\max\left\{\frac{s}{s-p}, \, \frac{s_1}{p(s_1-1)}\right\}$ and $s=\min \left\{2,s_{2} \right\}$. Moreover, if $s_{1}=1$ the conclusion holds for $p_{_0}\leq p<1$ and $d=\frac{1}{1-p}$.
\end{theorem}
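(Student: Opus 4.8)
The plan is to reduce, through the atomic description of weighted Hardy spaces, to a single uniform weighted bound for the image of an atom, and to prove that bound by running the near/far analysis behind Theorem~\ref{teorema_s-hormander} with the weight $w$ in place, the passage from unweighted Lebesgue estimates for $Ta$ to weighted ones being made through the reverse-Hölder inequality satisfied by $w$.

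Since $w\in A_1\subset A_\infty$, the space $H^p_w(\R^n)$ admits an atomic decomposition $f=\sum_k\lambda_k a_k$ with $\sum_k|\lambda_k|^p\lesssim\|f\|_{H^p_w}^p$ into weighted $(p,\infty)$-atoms (see \cite{GarciaFranciaWeighted}): $\supp a_k\subset Q_k$, $\|a_k\|_{L^\infty}\le w(Q_k)^{-1/p}$, and $\int a_k(x)x^\alpha\,dx=0$ for $|\alpha|\le\lfloor\delta\rfloor$ (the vanishing moments matching $T^{*}(x^\alpha)=0$). As $0<p\le1$ makes the $L^p_w$ quasi-norm $p$-subadditive, it suffices to prove $\|Ta\|_{L^p_w(\R^n)}\le C$ uniformly over weighted $(p,\infty)$-atoms $a$, with $C$ depending only on $n$, $p$, the operator data and the $A_1\cap RH_d$ constants of $w$; $T$ then extends from the atoms by density. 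This is where the image $Ta$ is recognized as a (weighted) molecule in the sense of Section~\ref{section-molecules}, the required estimate being precisely that such molecules lie in $L^p_w$.

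Fix an atom $a$ on $Q=Q(x_0,r)$ and split, as in the proof of Theorem~\ref{teorema_s-hormander}, according to $r\ge1$ or $r<1$. Let $Q^{*}=c\sqrt n\,Q$ when $r\ge1$, and let $Q^{*}$ be the cube concentric with $Q$ of side $\simeq r^{\rho}$, for a parameter $0<\rho\le\sigma$ still to be chosen, when $r<1$; in both cases $Q\subset Q^{*}$ and $\R^n\setminus Q^{*}=\bigcup_{j\ge1}C_j$, the $C_j$ being the dyadic shells at whose scales \eqref{s-hormander-1}--\eqref{s-hormander-2} are stated, so that $\|Ta\|_{L^p_w}^p=\int_{Q^{*}}|Ta|^p w+\sum_{j\ge1}\int_{C_j}|Ta|^p w$. \emph{Local term.} When $r\ge1$ one has $|Q^{*}|\simeq|Q|$ and it is enough to use (i): Hölder with exponent $2/p$, the reverse-Hölder bound $(\int_{Q^{*}}w^{(2/p)'})^{1/(2/p)'}\lesssim|Q^{*}|^{-p/2}w(Q^{*})$ valid since $w\in RH_{(2/p)'}$, the $A_1$-doubling $w(Q^{*})\simeq w(Q)$, and $\|Ta\|_{L^2}\lesssim\|a\|_{L^2}\le w(Q)^{-1/p}|Q|^{1/2}$ together give $\int_{Q^{*}}|Ta|^pw\lesssim1$. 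When $r<1$ the volume of $Q^{*}$ exceeds that of $Q$ by the factor $r^{-n(1-\rho)}$, and this excess is absorbed by the smoothing in (iii): $\|Ta\|_{L^{s_2}}\lesssim\|a\|_{L^q}\le w(Q)^{-1/p}|Q|^{1/q}$ with $1/q=1/s_2+\beta/n$, Hölder with exponent $s_2/p$, the reverse-Hölder bound with $w\in RH_{(s_2/p)'}$, the doubling estimate $w(Q^{*})\lesssim r^{-n(1-\rho)}w(Q)$, and the lower bound $\beta\ge n(1-\sigma)(1-1/s_2)$ combine to give $\int_{Q^{*}}|Ta|^pw\lesssim1$ once $\rho$ and $p$ lie in the admissible range. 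Since $\max\{(2/p)',(s_2/p)'\}=(\min\{2,s_2\}/p)'=\tfrac{s}{s-p}$ with $s=\min\{2,s_2\}$, this is the origin of the first entry of $d$.

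\emph{Global term.} Here the vanishing moments of $a$ let us replace $K(x,y)$ by the remainder of its degree-$\lfloor\delta\rfloor$ Taylor expansion in $y$ about $x_0$ — merely $K(x,y)-K(x,x_0)$ when $\delta\le1$, and the refined conditions of Section~\ref{section-derivative-conditions} when $\delta>1$ — so that Minkowski's inequality and the $D_{s_1}$ estimates yield a shell bound $\|Ta\|_{L^{s_1}(C_j)}\lesssim\|a\|_{L^1}|C_j|^{1/s_1-1}\,2^{-j\delta}$ when $r\ge1$, and the corresponding bound governed by \eqref{s-hormander-2} when $r<1$. Applying Hölder's inequality with exponent $s_1/p$ — legitimate exactly because $p<s_1$, which is why the case $s_1=1$ is restricted to $p<1$ — followed by the reverse-Hölder control of $w$ on $C_j$ and the $A_1$-doubling comparison of $w(C_j)$ with $w(Q)$ (for which only the crude growth $w(C_j)\lesssim 2^{jn}r^{-n(1-\rho)}w(Q)$ is available), reduces $\int_{C_j}|Ta|^pw$ to a quantity comparable to $2^{j\mu}\,r^{\nu}$, where $\mu$ and $\nu$ are explicit functions of $n,\delta,\sigma,\beta,s_1,s_2,p,\rho$ and of the reverse-Hölder index. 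Choosing $\rho$ strictly below $\sigma$ (needed to make the $r$-power non-negative, while $\rho$ too small over-inflates $Q^{*}$ in the local term) and the reverse-Hölder index equal to $\tfrac{s_1}{p(s_1-1)}$ (resp.\ $\tfrac{1}{1-p}$ when $s_1=1$), one finds that the shell series converges and the $r$-power is harmless exactly when $p_{_0}\le p$, with $p_{_0}$ given by \eqref{pcritico}. Adding the local and global contributions gives $\|Ta\|_{L^p_w}\le C$, whence $T$ is bounded from $H^p_w(\R^n)$ to $L^p_w(\R^n)$ for $p_{_0}\le p\le1$ (and $p_{_0}\le p<1$ when $s_1=1$). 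The endpoint $p=p_{_0}$, excluded from Theorem~\ref{teorema_s-hormander}, is now admissible because the target is only $L^p_w$: at $p=p_{_0}$ the borderline term of the shell series need merely be estimated in $L^p_w$, not reassembled into an $H^p_w$-molecule. The crux of the argument is this joint calibration of $\rho$ and of the reverse-Hölder exponent $d$ — the shells must begin far enough out that the kernel conditions of type $\sigma$ apply, yet close enough in that $w(C_j)$, controlled only by the $A_1$ factor $2^{jn}$ relative to $w(Q)$, does not overwhelm the molecular decay inherited from \eqref{s-hormander-1}--\eqref{s-hormander-2} — and it is this balance that simultaneously fixes $p_{_0}$ and $d$.
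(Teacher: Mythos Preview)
Your proposal is correct and follows essentially the same route as the paper's proof: atomic reduction, a near/far split at scale $r$ (when $r\ge1$) or $r^\rho$ (when $r<1$), with the near part controlled by (i) or (iii) together with the reverse-H\"older inequality and $A_1$-doubling, and the far part by the $D_{s_1}$ kernel estimate combined with the $A_1$-comparison $w(B_{j+1})\lesssim(|B_{j+1}|/|Q|)\,w(Q)$, the constraint $\rho_1\le\rho\le\rho_2$ then yielding exactly $p\ge p_{_0}$. The only (harmless) technical differences are that in the far term the paper applies H\"older to $|K(x,y)-K(x,z)|$ against $w^{1/p}$, which is what actually produces the index $s_1/(p(s_1-1))$ --- your ordering (first Minkowski to $\|Ta\|_{L^{s_1}(C_j)}$, then H\"older with exponent $s_1/p$) would only need the smaller index $s_1/(s_1-p)$ --- and your appeal to a Taylor remainder when $\delta>1$ is unnecessary, since Theorem~\ref{teorema_s-hormander} uses the plain $D_{s_1}$ condition and the proof only invokes the zeroth moment of the atom.
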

In particular the previous result covers the \cite[Theorem 2]{LiLu2006}  due to J. Li and S. Lu taking $s_1=s_2=2$, since  $\delta-$kernels of type $\sigma$ satisfy the $D_{2}$ condition.

The organization of the paper is as follows. In Section \ref{preli} we present general aspects
of Hardy spaces and some properties on $T$ associated to kernels satisfying the $D_s$ condition,  in particular the well definition of $T^{*}(x^{\alpha})$ for $|\alpha|\leq \lfloor \delta \rfloor$. The Section \ref{section-molecules} is devoted to present the appropriated molecules that will be used in the proof of Theorem  \ref{teorema_s-hormander}, in special an atomic decomposition result for these molecules (see Theorem  \ref{molecular_s_decomposition}). In Section \ref{secfour}  we present the proof of Theorems  \ref{teorema_s-hormander}  and \ref{theorem-Linf-BMO}, including some comments and remarks. Also, in Section \ref{section-derivative-conditions} we provide a version of Theorem \ref{teorema_s-hormander} assuming some  weaker integral derivative conditions. In Section \ref{section_aplication}, we recall some basic definitions on weights considered at Theorem \ref{th5.2} and its proof. Lastly, we show that pseudodifferential operators in the H\"ormander class $OpS^{\,m}_{\sigma, b}(\R^n)$ where  $b \leq \sigma$ and $m \leq -n(1-\sigma)/2$ satisfies the $D_{s}$ integral derivative condition, in particular also satisfies \eqref{s-hormander-1} and \eqref{s-hormander-2}, for all $1 \leq s\leq 2$, extending the \cite[Theorem 2.1]{AlvarezHounie}. In Section \ref{section-yabuta-operators}, we provide some generalizations and extensions to operators satisfying Dini-type modulus of continuity.

\section{Preliminaries}\label{preli}

Let $\varphi \in \S(\R^n)$ be such that $\int_{{\R^n}}{\varphi(x)dx} \neq 0$. We define the maximal operator by
$$
\mathcal{M}_{\varphi}f(x):=\sup_{t>0}{|f \ast \varphi_t(x)|}
$$
where $\varphi_t(x)=t^{-n} \varphi(x/t)$. We say that a tempered distribution $f$ belongs to the Hardy space $H^p(\R^n)$ for $p>0$ if there exists $\varphi$ as before such that $\mathcal{M}_{\varphi}f \in L^p(\R^n)$. The functional $\|u\|_{H^p}$ defines a quasi-norm for $0<p<1$ and a norm for $p \geq1$ (we refer as a norm for $0<p<\infty$ by simplicity). In particular $H^p(\R^n)=L^p(\R^n)$ for $p>1$ with equivalent norms and $H^1(\R^n) \subsetneq L^1(\R^n)$ with continuous inclusion. Moreover  $H^p(\R^n)$ is a complete metric space with the distance $d(u,v)=\|u-v\|_{H^p}^p$ with  $u,v\in  H^p(\R^{n})$ for $0<p \leq 1$.   

For $0<p\leq 1$ the space $H^p(\R^n)$ can be decomposed into special functions called atoms, that we present in the sequence. 

\begin{definition}
	Let $0<p\leq1$ and $1\leq s \leq \infty$ with $p<s$. We say that a measurable function $a(x)$ is a $(p,s)-$atom in $H^p(\R^n)$ if there exist a ball $B:=B(z,r)\subset \R^n$ such that
	\begin{equation*}
		\textnormal{(i)} \ \supp(a) \subset B; \quad \textnormal{(ii)} \ \| a \|_{L^s} \leq |B|^{\frac{1}{s}-\frac{1}{p}}; \quad \textnormal(iii) \ \int{a(x)x^{\alpha}}dx=0 \ \textnormal{for all} \ |\alpha|\leq N_p
	\end{equation*}
	where $N_p:= \lfloor n(1/p-1) \rfloor$. For the limit case $s=\infty$, condition (ii) is $\| a \|_{L^\infty} \leq |B|^{-1/p}$.
\end{definition}

\begin{theorem}\cite[Theorem 4.10]{GarciaFranciaWeighted}
	Let $0<p\leq 1$, $1\leq s \leq \infty$, $p<s$ and $f \in H^p(\R^n)$. Then, there exist a sequence $\{a_j\}_{j \in \N}$ of $(p,s)-$atoms and complex scalars $\{\lambda_j\}_{j \in \N}$ such that in $H^p(\R^n)$ we have
	$
	f=\sum_{j=0}^{\infty}{\lambda_ja_j}
	$
	with $\| f \|_{H^p} \approx \text{inf}\,\left( \sum{|\lambda_j|^p} \right)^{1/p}$.
\end{theorem}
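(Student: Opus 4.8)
The statement to prove is the atomic decomposition of $H^p(\R^n)$ (Theorem 4.10 of \cite{GarciaFranciaWeighted}): every $f \in H^p(\R^n)$, $0<p\le 1$, can be written $f=\sum_{j} \lambda_j a_j$ in $H^p$ with $(p,s)$-atoms $a_j$ and $\|f\|_{H^p} \approx \inf\left(\sum |\lambda_j|^p\right)^{1/p}$. Since this is quoted verbatim from \cite{GarciaFranciaWeighted}, the natural plan is to reproduce the classical Calder\'on--Zygmund--type construction. First I would fix a suitable $\varphi \in \S(\R^n)$ with $\int \varphi \ne 0$ (using the grand maximal function characterization of $H^p$, so that the choice of $\varphi$ is immaterial up to equivalent norms), and for $f \in H^p(\R^n)$ set $\Omega_k := \{x \in \R^n : \mathcal{M}_\varphi f(x) > 2^k\}$ for $k \in \Z$. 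Each $\Omega_k$ is open with $|\Omega_k| < \infty$ (since $\mathcal{M}_\varphi f \in L^p$), and $\bigcap_k \Omega_k$ has measure zero while $f$ is supported, in the distributional sense, on $\bigcup_k \Omega_k$.

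The core of the argument is the Whitney decomposition: for each $k$ write $\Omega_k = \bigcup_i Q_{k,i}$ as a union of dyadic cubes with bounded overlap whose side lengths are comparable to their distance to $\Omega_k^c$, and take an associated partition of unity $\{\eta_{k,i}\}$ subordinate to a mild dilation of the $Q_{k,i}$. One then defines the Calder\'on--Zygmund decomposition of the distribution $f$ at height $2^k$, $f = g_k + \sum_i b_{k,i}$, where the ``bad'' pieces $b_{k,i}$ are obtained by subtracting from $f\eta_{k,i}$ its projection onto polynomials of degree $\le N_p$ with respect to the measure $\eta_{k,i}\,dx$ (this enforces the moment conditions (iii)), and $g_k$ is the ``good'' part. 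Telescoping in $k$, one obtains $f = \sum_{k} (g_{k+1} - g_k)$, and after regrouping, $g_{k+1}-g_k = \sum_i A_{k,i}$ where each $A_{k,i}$ is supported in a fixed dilate of $Q_{k,i}$, has vanishing moments up to order $N_p$, and — here is the quantitative heart — satisfies the pointwise bound $\|A_{k,i}\|_{L^\infty} \lesssim 2^k$. Normalizing, $a_{k,i} := A_{k,i}/(c\,2^k |Q_{k,i}|^{1/p})$ is a $(p,\infty)$-atom and $\lambda_{k,i} := c\,2^k|Q_{k,i}|^{1/p}$; since a $(p,\infty)$-atom is in particular a $(p,s)$-atom for every $s$ with $p<s\le\infty$ (condition (ii) with exponent $s$ follows from the $L^\infty$ bound and the support condition by H\"older), this gives the decomposition with atoms of any admissible exponent $s$.

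Next I would verify the norm estimate. The lower bound $\inf\left(\sum|\lambda_j|^p\right)^{1/p} \lesssim \|f\|_{H^p}$ follows from $\sum_{k,i} |\lambda_{k,i}|^p \lesssim \sum_k 2^{kp}\sum_i |Q_{k,i}| \lesssim \sum_k 2^{kp}|\Omega_k| \lesssim \|\mathcal{M}_\varphi f\|_{L^p}^p = \|f\|_{H^p}^p$, using the bounded overlap of the Whitney cubes and the distributional inequality $\sum_k 2^{kp}|\Omega_k| \lesssim \int (\mathcal{M}_\varphi f)^p$. The reverse inequality $\|f\|_{H^p} \lesssim \inf\left(\sum|\lambda_j|^p\right)^{1/p}$ is the general subadditivity fact that for any decomposition into $(p,s)$-atoms one has $\|\sum \lambda_j a_j\|_{H^p}^p \le \sum |\lambda_j|^p \|a_j\|_{H^p}^p \lesssim \sum|\lambda_j|^p$, which reduces to the uniform bound $\|a\|_{H^p} \lesssim 1$ for a single atom; the latter is proved by splitting $\mathcal{M}_\varphi a$ into the part over the (dilated) supporting ball, handled by the $L^s$--$L^p$ H\"older inequality and the $L^s \to L^s$ boundedness of $\mathcal{M}_\varphi$, and the part far from the ball, handled by using the moment conditions to gain decay via a Taylor expansion of $\varphi_t$. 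Finally, one must check convergence of the series $\sum_{k,i}\lambda_{k,i}a_{k,i}$ to $f$ in $H^p$ (not merely in $\S'$): this follows once the partial sums are shown to be Cauchy in the $H^p$ quasi-norm, again via the atomic estimate applied to tails of the series.

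\textbf{Main obstacle.} The delicate point — and the part I expect to require the most care — is establishing the uniform pointwise bound $\|A_{k,i}\|_{L^\infty}\lesssim 2^k$ for the regrouped pieces of the telescoping sum $g_{k+1}-g_k$. This requires controlling, on a given Whitney cube $Q_{k,i}$ of generation $k$, the sum of contributions of the finitely many overlapping cubes $Q_{k+1,i'}$ of the next generation, together with the polynomial-projection correction terms; the estimate hinges on the fact that outside $\Omega_{k+1}$ one controls $f$ pointwise by $\mathcal{M}_\varphi f \le 2^{k+1}$ in an averaged/distributional sense, and on precise geometric properties of the Whitney cubes (comparability of sizes of overlapping cubes from adjacent generations, finite overlap). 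The bookkeeping for the polynomial reproducing kernels — showing the projection operators are uniformly bounded on $L^\infty$ with respect to the relevant measures and that the corrections telescope correctly — is the technically heaviest ingredient. Since this is a standard (if lengthy) construction, in the paper I would simply invoke \cite[Theorem 4.10]{GarciaFranciaWeighted} and refer the reader there for the details.
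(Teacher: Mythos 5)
The paper offers no proof of this statement---it is quoted verbatim from \cite[Theorem 4.10]{GarciaFranciaWeighted}---and your outline faithfully reproduces the standard construction used there (grand maximal level sets, Whitney decomposition, Calder\'on--Zygmund decomposition of the distribution with polynomial projections, telescoping, the key bound $\|A_{k,i}\|_{L^\infty}\lesssim 2^k$, and the uniform $H^p$ bound for atoms), so you are taking essentially the same route as the cited source, which you correctly say you would simply invoke. One minor point to adjust: for the reverse inequality with $s=1$ (allowed when $p<1$), the bound $\|a\|_{H^p}\lesssim 1$ cannot use $L^s$-boundedness of $\mathcal{M}_\varphi$; on the dilated ball one instead combines the weak $(1,1)$ estimate with Kolmogorov's inequality.
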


\begin{definition}\cite[p. 23]{MeyerCoifman1997}
	Let $m \in \mathbb{N} \cup \{0\}$. We say that an operator $T$ satisfies $T^{*}(x^{\alpha})=0$ for $|\alpha|\leq m$ if
	\begin{equation} \label{meyer}
		\int_{\R^{n}}x^{\alpha}Ta(x)dx=0, \quad \forall \ a \in L^{2}_{\#,m}(\R^{n})
	\end{equation}
	where $L^{2}_{\#,m}(\R^{n})$ is the set of functions in $L_{c}^{2}(\R^{n})$ such that $\int_{\R^{n}}x^{\alpha}f(x)dx=0$ for all $|\alpha|\leq m$. Here $L_{c}^{2}(\R^{n})$ is the set of functions in $L^{2}(\R^{n})$ with compact support.  
\end{definition}

\begin{remark}
	If $a(x)$ is a $(p,\infty)-$atom, then $a \in L^{2}_{\#,m}(\R^{n})$ with $m:=N_{p}$.
\end{remark}

Next we show that $T^{\ast}(x^{\alpha})$ is well defined where $T$ is given by Theorems \ref{teorema_s-hormander} and \ref{theorem-Linf-BMO}.

\begin{proposition} \label{estimative-L1-moments}
	Let $T$ be a linear and bounded operator on $L^2(\R^n)$ whose kernel associated to it satisfies the $D_1$ condition. Then $x^{\alpha}Ta(x) \in L^1(\R^n)$ for all $a \in L^{2}_{\#,m}(\R^n)$ with $|\alpha| \leq m < \delta$.
\end{proposition}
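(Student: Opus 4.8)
The plan is to split $Ta$ into a "local" piece near the support of $a$ and a "far" piece, and to use the $D_1$ condition together with the cancellation of $a$ to control the far piece, while the local piece is handled by the $L^2$-boundedness of $T$. Fix $a \in L^2_{\#,m}(\R^n)$ supported in a ball $B = B(z,r)$, and assume without loss of generality that $z=0$ (the general estimate follows by translation, with constants depending only on $|z|$, which is harmless since we only need $x^\alpha Ta \in L^1$). Write $\R^n = 2B^* \cup (2B^*)^c$ where $B^* = B(z, r^\rho)$ or $B(z,r)$ according to whether $r<1$ or $r>1$, chosen so that the relevant case of the $D_1$ condition (either \eqref{s-hormander-1a} or \eqref{s-hormander-2a}) applies on the dyadic annuli $C_j = C_j(z, \tilde r)$ that exhaust $(2B^*)^c$.

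First I would treat the local part. On $2B^*$ the factor $|x^\alpha|$ is bounded by a constant depending on $r$ (and $\rho$), so by Cauchy--Schwarz and the $L^2\to L^2$ boundedness of $T$,
\[
\int_{2B^*} |x^\alpha|\,|Ta(x)|\,dx \lesssim |2B^*|^{1/2}\, \|Ta\|_{L^2} \lesssim |2B^*|^{1/2}\, \|a\|_{L^2} < \infty.
\]
Next, for the far part, I would use the moment cancellation of $a$: for $x \notin 2B^*$ write
\[
Ta(x) = \int_B K(x,y)\, a(y)\, dy = \int_B \Bigl( K(x,y) - P_{x}(y) \Bigr)\, a(y)\, dy,
\]
where $P_x(y)$ is the degree-$m$ Taylor polynomial of $y \mapsto K(x,y)$ about $y=z$; since $a$ kills polynomials of degree $\le m$, subtracting $P_x$ is legitimate. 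The Taylor remainder is estimated, for $|\alpha|=m$ and $|y-z|<r$, by $|K(x,y)-P_x(y)| \lesssim |y-z|^m \sup_{|\eta|\le m}|\partial_y^\eta K(x,\zeta) - \partial_y^\eta K(x,z)|$ for $\zeta$ on the segment from $z$ to $y$; but since we only have the integral $D_1$ condition on differences of $K$ (not pointwise derivative bounds), I would instead argue directly on dyadic annuli: decompose $(2B^*)^c = \bigcup_{j\ge 1} C_j$, bound $|x^\alpha| \lesssim (2^j \tilde r)^m$ on $C_j$, and replace the Taylor-polynomial device by iterating the first-difference estimate $m$ times — equivalently, one shows $\int_{C_j} |K(x,y) - P_x(y)|\,dx \lesssim (2^j\tilde r)^{-m}\, \|\,\cdot\,\|$-type decay by combining $m$-fold applications of \eqref{s-hormander-1a}/\eqref{s-hormander-2a}. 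Summing,
\[
\int_{(2B^*)^c} |x^\alpha|\,|Ta(x)|\,dx \le \sum_{j\ge 1} (2^j\tilde r)^m \int_{C_j}\!\!\int_B |K(x,y)-P_x(y)|\,|a(y)|\,dy\,dx \lesssim \|a\|_{L^1} \sum_{j\ge 1} 2^{-j(\delta - m)} < \infty,
\]
the geometric series converging precisely because $m < \delta$; and $\|a\|_{L^1} \lesssim |B|^{1/2}\|a\|_{L^2} < \infty$.

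The main obstacle is the step replacing the Taylor-polynomial subtraction: because we only assume the $L^1$-type integral $D_1$ condition on first differences of $K$ — rather than pointwise control of the derivatives $\partial_y^\eta K$ — one cannot literally Taylor expand $K(x,\cdot)$ and bound the remainder pointwise. The correct substitute is to subtract, against the vanishing moments of $a$, a polynomial built from \emph{averages} of $K(x,\cdot)$ (or to telescope the difference through a chain $z = y_0, y_1, \dots, y_m = y$ and apply \eqref{s-hormander-1a}/\eqref{s-hormander-2a} at each link, absorbing the intermediate factors into the $|y-z|^m$ gain). Carrying this out so that the annular integrals genuinely produce the $2^{-j\delta}$ decay — and checking that the $r<1$ case, where one must use \eqref{s-hormander-2a} with its extra measure factor $|C_j(z,r^\rho)|^{\frac{\delta}{n}(\frac1\rho - \frac1\sigma)}$, still sums — is the only delicate point; everything else is the routine local/global splitting above.
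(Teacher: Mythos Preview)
Your local/global split and the handling of the local piece by Cauchy--Schwarz and $L^2$-boundedness are exactly what the paper does. The gap is in the far piece: you over-complicate the argument by reaching for a degree-$m$ Taylor subtraction, then correctly observe that the $D_1$ hypothesis gives no pointwise control on $\partial_y^\eta K$, and finally propose a vague ``iterate the first-difference $m$ times / telescope through a chain'' fix that is not justified and not obviously implementable with only the integral condition.

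The point you are missing is that no higher-order cancellation is needed. The hypothesis is $|\alpha|\le m<\delta$, so in particular $|\alpha|<\delta$. Since $a\in L^2_{\#,m}$ has in particular vanishing zeroth moment, you may subtract just the constant $K(x,z)$:
\[
Ta(x)=\int_{B(z,r)}\bigl(K(x,y)-K(x,z)\bigr)a(y)\,dy,\qquad x\notin 2B^*.
\]
Now a \emph{single} application of the $D_1$ condition on each annulus gives, for $r\ge 1$,
\[
\int_{C_j(z,r)}|x|^{|\alpha|}|Ta(x)|\,dx \lesssim (2^jr)^{|\alpha|}\,\|a\|_{L^1}\,2^{-j\delta},
\]
and the sum $\sum_j 2^{j(|\alpha|-\delta)}$ converges precisely because $|\alpha|<\delta$. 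For $r<1$ the same subtraction together with \eqref{s-hormander-2a} yields $\sum_j 2^{j(|\alpha|-\delta/\sigma)}<\infty$, again since $|\alpha|<\delta\le \delta/\sigma$. This is the paper's proof.

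In short, the ``main obstacle'' you flag is self-created: the $D_1$ condition already carries the full $2^{-j\delta}$ decay from a zeroth-order subtraction, which beats the $(2^j)^{|\alpha|}$ growth of the polynomial weight. No Taylor expansion, telescoping, or iteration is required.
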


\begin{proof}
	By simplicity suppose that $\supp(a) \subset B(0,r)$ and  $r \geq 1$. Write
	$$
	\int_{\R^n}{|x^{\alpha}Ta(x)|dx} = \int_{B(0,2r)}{|x^{\alpha}Ta(x)|dx}+\int_{\R^n \setminus B(0,2r)}{|x^{\alpha}Ta(x)|dx}.
	$$
	From H\"older inequality and boundedness of $T$ on $L^2(\R^n)$ we get
	\begin{align*}
		\int_{B(0,2r)}{|x^{\alpha}Ta(x)|dx}  \leq  \| x^{\alpha} \|_{L^{\infty}(B(0,2r))} \ |B(0,2r)|^{\frac{1}{2}} \ \| Ta \|_{2} 
		\lesssim   r^{|\alpha|+\frac{n}{2}}  \| a \|_{2} <\infty.
	\end{align*}
	For the second integral, since $a \in L^{2}_{\#,m}(\R^n)$ we may estimate
	\begin{align*}
		|Ta(x)| = \left| \int_{B(0,r)}{[K(x,y)-K(x,0)]a(y)dy} \right|  \leq \int_{B(0,r)}{|K(x,y)-K(x,0)||a(y)|dy}.
	\end{align*}
	Then
	\begin{align*}
		\int_{\R^n \setminus B(0,2r)}{|x^{\alpha}Ta(x)|dx} 
		& \leq  \int_{B(0,r)}{|a(y)| \int_{\R^n \setminus B(0,2r)}{|x|^{|\alpha|} \ |K(x,y)-K(x,0)|dx}   dy} \nonumber \\
		& \leq  \sum_{j=0}^{\infty}{{(2^jr)^{|\alpha|}}\int_{B(0,r)}{|a(y)| \int_{C_j(0,r)}{ |K(x,y)-K(x,0)|dx}   dy}} \nonumber \\
		& \lesssim   \sum_{j=0}^{\infty} (2^jr)^{|\alpha|}  \|a\|_{2} \ |B(0,r)|^{\frac{1}{2}} \  2^{-j\delta} \\
		& \leq  r^{|\alpha|+\frac{n}{2}} \ \| a \|_{2} \  \sum_{j=0}^{\infty}{(2^j)^{|\alpha|-\delta}}<\infty
	\end{align*}
	since $|\alpha| < \delta$. For $r<1$ we may write
	$$
	\int_{\R^n}{|x^{\alpha}Ta(x)|dx} = \int_{B(0,2r^{\rho})}{|x^{\alpha}Ta(x)|dx}+\int_{\R^n \setminus B(0,2r^{\rho})}{|x^{\alpha}Ta(x)|dx},
	$$
	for some $0<\rho\leq \sigma <1$. The estimate for the first integral is the same as the previous case and for the second
	\begin{align}
		\int_{\R^n \setminus B(0,2r^{\rho})}{|x^{\alpha}Ta(x)|dx} & \leq \sum_{j=0}^{\infty}{(2^jr^{\rho})^{|\alpha|}\int_{B(0,r)}{|a(y)|  \int_{C_j(0,r^{\rho})}{|K(x,y)-K(x,0)|dx}  dy }} \nonumber \\
		& \lesssim  r^{|\alpha| \rho +\frac{n}{2} + \delta - \frac{\rho \delta}{\sigma}} \ \| a \|_{2} \ \sum_{j=0}^{\infty}{(2^j)^{|\alpha|-\frac{\delta}{\sigma}}}<\infty \nonumber 
	\end{align}
	since $|\alpha| < \delta$ that implies $|\alpha| < {\delta}/{\sigma}$.
\end{proof}

\section{Generalized Molecules} \label{section-molecules}

M. Taibleson and G. Weiss introduced in \cite{Taibleson-Weiss} the molecular structure in $H^p(\R^n)$ and a useful method to establish continuity on Hardy spaces for certain classes of linear operators by showing that atoms are mapped into molecules i.e., special mensurable functions on $H^p(\R^n)$ with peculiar control in $L^{s}-$norm \cite[p. 71]{Taibleson-Weiss}. In particular, this is used to show continuity of standard Calder\'on-Zygmund operators (see \cite[Theorem 1.1]{AlvarezMilman}). However, for operators associated to $\delta-$ kernels of type $\sigma$ the classical definition of molecules does not apply in an effective way for this purpose and a notion of molecules that best fit these types of operators is necessary. Such ideas were originally explored by B. Bordin \cite{Bordin}, J. \'Alvarez and M. Milman \cite{AlvarezMilman}. 

Next we define a slight generalization of molecules presented in \cite[Definition 2.2]{AlvarezMilman}.

\begin{definition} \label{s-molecule}
	Let $0<p, \rho <1<q<s<\infty$, $1\leq t <\infty$ and $t\leq s$ such that
	$$
	n\left(\frac{t}{p}-1\right)< \lambda \leq n \left( \frac{t}{s}-1 \right)+ \frac{n\,t}{1-\rho}\left(\frac{1}{q}-\frac{1}{s}\right).
	$$
	If $p=1$, we restrict $1<t<\infty$. We say that a function $M(x)$ is a $(p, \rho,  q,  \lambda, s,t)-$molecule if there exist a ball $B(z,r) \subset \R^n$ and a constant $C>0$ such that for $r>1$
	\begin{enumerate}
		\item[\textnormal{M1.}] $\displaystyle \int_{\R^n}{|M(x)|^{t}dx} \leq C \ r^{n \left( 1-\frac{t}{p} \right)}$;
		\item[\textnormal{M2.}] $\displaystyle \int_{\R^n}{|M(x)|^{t}|x-z|^{\lambda}dx \leq C \ r^{\lambda + n \left( 1-\frac{t}{p} \right)}}$,
	\end{enumerate}
	and for $r\leq 1$
	\begin{enumerate}
		\item[\textnormal{M3.}] $\displaystyle \int_{\R^n}{|M(x)|^{t}dx} \leq C \ r^{n \left[\rho \left( 1-\frac{t}{s} \right)+ t \left(\frac{1}{q}-\frac{1}{p}\right) \right]}$;
		\item[\textnormal{M4.}] $\displaystyle \int_{\R^n}{|M(x)|^{t}|x-z|^{\lambda}dx \leq C \ r^{\, \rho \lambda + n \left[\rho \left( 1-\frac{t}{s} \right)+ t \left( \frac{1}{q}-\frac{1}{p} \right) \right]}}$.
	\end{enumerate}
	Besides that, $M(x)$ satisfies for all $r>0$
	\begin{enumerate}
		\item[\textnormal{M5.}] $\displaystyle \int_{\R^n}{M(x)x^{\alpha}dx}=0 \quad \forall \,\,|\alpha| \leq N_p$.
	\end{enumerate}
\end{definition}

\begin{remark} \label{remark-molecules}
	\begin{itemize}
		\item[\textnormal{(a)}] \textnormal{If $t=s=2$ we recover the molecules defined by \cite[Definition 2.2]{AlvarezMilman};}
		\item[\textnormal{(b)}] \textnormal{In order to show that $M(x)$ satisfies conditions (M1) and (M2) it is sufficient verify simultaneously} 
		\begin{enumerate}
			\item[\textnormal{M1a.}] $\displaystyle{ \int_{B(z,r)}{|M(x)|^{t}dx} \leq C \ r^{n \left( 1-\frac{t}{p} \right)}}$;
			\item[\textnormal{M2a.}] $\displaystyle{ \int_{\R^n \backslash B(z,r)}{|M(x)|^{t}|x-z|^{\lambda}dx \leq C \ r^{\lambda + n \left( 1-\frac{t}{p} \right)}}}.$
		\end{enumerate}
		\textnormal{Similar idea applies to (M3) and (M4) integrating on $B(z,r^{\rho})$ and $\R^n \setminus B(z,r^{\rho})$ respectively.}
	\end{itemize}
\end{remark}

Next we verify that condition (M5) is well defined for $M(x)$ satisfying (M1) to (M4).

\begin{proposition} \label{molecule-local-integrable}
	Suppose $M(x)$ is a function satisfying (M1) and (M2) or (M3) and (M4). Then $x^{\alpha}M(x)$ is an absolutely integrable function.
\end{proposition}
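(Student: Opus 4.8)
The plan is to split each integral $\int_{\R^n}|x^\alpha M(x)|\,dx$ according to the ball $B:=B(z,r)$ attached to $M$ in Definition~\ref{s-molecule} — using $B(z,r)$ when $r>1$ and $B(z,r^\rho)$ when $r\leq1$ — and to estimate the ``near'' piece and the ``far'' piece separately: the near piece by H\"older's inequality together with (M1) (resp. (M3)), the far piece by H\"older's inequality together with (M2) (resp. (M4)). Since we only need $|\alpha|\leq N_p\leq n(1/p-1)$, the whole matter reduces to controlling, over the complement of the ball, the two integrals $\int|M|\,dx$ and $\int|x-z|^{|\alpha|}|M(x)|\,dx$; the decay built into the lower bound $\lambda>n(t/p-1)$ for the molecular exponent is exactly what makes the relevant power weights integrable at infinity.

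Concretely, for $r>1$ one has $|x^\alpha|\leq|x|^{|\alpha|}$, which is bounded by a constant depending on $z$ and $r$ on $B(z,r)$, so H\"older with exponents $t$ and its conjugate $t'$ together with (M1) gives $\int_{B(z,r)}|x^\alpha M(x)|\,dx\lesssim\big(\int_{B(z,r)}|M|^t\big)^{1/t}|B(z,r)|^{1/t'}<\infty$. On $\R^n\setminus B(z,r)$ use $|x|^{|\alpha|}\lesssim(|x-z|+|z|)^{|\alpha|}\lesssim 1+|x-z|^{|\alpha|}$, with constants depending on $z$; it then remains to see that both $\int_{\R^n\setminus B(z,r)}|M|\,dx$ and $\int_{\R^n\setminus B(z,r)}|x-z|^{|\alpha|}|M(x)|\,dx$ are finite. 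For the second, write $|x-z|^{|\alpha|}|M|=|x-z|^{|\alpha|-\lambda/t}\,\big(|x-z|^{\lambda}|M|^t\big)^{1/t}$ and apply H\"older to bound it by
\[
\Big(\int_{|x-z|>r}|x-z|^{(|\alpha|-\lambda/t)t'}\,dx\Big)^{1/t'}\Big(\int_{\R^n}|x-z|^{\lambda}|M(x)|^t\,dx\Big)^{1/t};
\]
the second factor is finite by (M2), and the first is finite as soon as $(|\alpha|-\lambda/t)t'<-n$, i.e. $\lambda>t|\alpha|+n(t-1)$, which holds because $t|\alpha|+n(t-1)\leq t\,n(1/p-1)+n(t-1)=n(t/p-1)<\lambda$ by Definition~\ref{s-molecule}. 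The integral $\int_{\R^n\setminus B(z,r)}|M|$ is the case $|\alpha|=0$ of the same computation, for which the required inequality $\lambda>n(t-1)$ again follows from $\lambda>n(t/p-1)$ since $p\leq1$. (When $t=1$ one simply uses $|x-z|^{|\alpha|-\lambda}\leq r^{|\alpha|-\lambda}$ on the complement, the condition on $\lambda$ being the same.)

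The case $r\leq1$ is handled identically after replacing $B(z,r)$ by $B(z,r^\rho)$ and using (M3) and (M4) in place of (M1) and (M2); the exponent conditions to be verified are precisely the ones above and do not involve $\rho$, so the argument goes through verbatim. There is no genuine obstacle in this proposition: the only point needing care is the bookkeeping of the admissible range of $\lambda$ from Definition~\ref{s-molecule}, to confirm that the power $|x-z|^{(|\alpha|-\lambda/t)t'}$ is integrable near infinity for every $|\alpha|\leq N_p$, and this is exactly what the lower bound $\lambda>n(t/p-1)$ provides.
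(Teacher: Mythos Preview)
Your proposal is correct and follows essentially the same approach as the paper: split the integral at the ball attached to $M$, control the near piece by H\"older and (M1)/(M3), and control the far piece by H\"older and (M2)/(M4), the crucial point in both arguments being that $\lambda>n(t/p-1)$ and $|\alpha|\leq N_p\leq n(1/p-1)$ force the power $|x-z|^{(|\alpha|-\lambda/t)t'}$ to be integrable at infinity. The only cosmetic differences are that the paper splits at $B(z,r)$ in both regimes (rather than $B(z,r^\rho)$ when $r\leq1$) and handles $x^\alpha$ via the multinomial expansion $x^\alpha=\sum_{\gamma\leq\alpha}\binom{\alpha}{\gamma}(x-z)^\gamma z^{\alpha-\gamma}$ rather than the cruder bound $|x|^{|\alpha|}\lesssim_{z}1+|x-z|^{|\alpha|}$; neither change affects the substance of the argument.
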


\begin{proof}
	{Suppose $1<t<\infty$.} Let $M$ be a function satisfying (M3) and (M4) for $r\leq1$. Split
	$$
	\int_{\R^n}{|x^{\alpha}M(x)|dx}=\int_{B(z,r)}{|x^{\alpha}M(x)|dx}+\int_{\R^n \setminus B(z,r)}{|x^{\alpha}M(x)|dx}.
	$$
	For the first integral, from H\"older inequality and (M3) we have
	\begin{align*}
		\int_{B(z,r)}{|x^{\alpha}M(x)|dx} &\leq \| x^{\alpha} \|_{L^{\infty}(B(z,r))} \, |B(z,r)|^{1-\frac{1}{t}} \, \left(\int_{\R^n}{|M(x)|^tdx}\right)^{\frac{1}{t}} <\infty
	\end{align*}
	and for the second 
	\begin{align*}
		\int_{\R^n \setminus B(z,r)}|& x^{\alpha}M(x)|dx \leq \sum_{|\gamma| \leq |\alpha|} C_{_{|\alpha|, |\gamma|}} \, |z|^{|\alpha|-|\gamma|} \int_{\R^n \setminus B(z,r)}{|M(x)| \, |x-z|^{\frac{\lambda}{t}+\left(|\gamma|-\frac{\lambda}{t}\right)}dx} \\	
		&\leq \| M \, |\cdot -z|^{\frac{\lambda}{t}} \|_{L^t} \,\sum_{|\gamma| \leq |\alpha|} C_{_{|\alpha|, |\gamma|, |z|}} \left( \int_{\R^n \setminus B(z,r)}{|x-z|^{\left( |\gamma|-\frac{\lambda}{t} \right)\frac{t}{t-1}}} \right)^{1-\frac{1}{t}}<\infty	
	\end{align*}
	where the integrability of $|x-z|^{ \left( |\gamma|-\frac{ \lambda}{t} \right)\frac{t}{t-1}}$ on $\R^n \backslash B(z,r)$ is guaranteed by $\lambda>n \left( t/p-1 \right)$ and $|\gamma|\leq |\alpha| \leq n(1/p-1)$. The estimate assuming (M1) and (M2) for $r>1$ follows analogously {as well as the case $t=1$.}
\end{proof}

\begin{lemma} \label{molecular_s_decomposition}
	Let $M(x)$ be a $(p,\rho, q, \lambda,s,t)-$molecule. Then 
	$
	M = \sum_{j=0}^{\infty}{\gamma_j a_j}
	$
	in  $\mathcal{S}'(\R^n)$ 
	where $a_j(x)$ is a $(p,t)-$atom and $\{ \gamma_j \}_{j \in \N}$ a sequence of complex scalars such that $\sum_{j}{|\gamma_j|^p}< \infty$. In particular, there exists a constant $C>0$ independent of $M$ such that $\|M\|_{H^{p}}\leq C$.
\end{lemma}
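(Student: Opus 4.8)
The plan is to carry out a Calderón–Zygmund-type decomposition of the molecule $M$ over dyadic annuli of the distinguished ball $B(z,r)$, adapted to whether $r>1$ or $r\le1$, and then to argue that the resulting pieces — after subtracting suitable polynomial corrections to restore the moment conditions (M5) — are (up to a fixed normalising constant) $(p,t)$-atoms with a controllable $\ell^p$-sum of coefficients. Write $B_k = 2^k B = B(z, 2^k r)$ for $k\ge 0$ (replacing $r$ by $r^\rho$ throughout when $r\le1$, as in the statement of Definition 3.2), put $E_0 = B_0$ and $E_k = B_k\setminus B_{k-1}$ for $k\ge1$, and set $M_k = M\chi_{E_k}$. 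Each $M_k$ is supported in $B_k$ but need not have vanishing moments; following Taibleson–Weiss, correct it by a polynomial $P_k$ of degree $\le N_p$ supported on $B_k$ so that $b_k := M_k - P_k$ satisfies $\int b_k(x)x^\alpha\,dx = 0$ for $|\alpha|\le N_p$. The telescoping of the corrections (the $P_k$ are chosen from an $L^t(B_k)$-orthonormal-type basis so that $\sum_k P_k$ reassembles consistently) uses (M5) for $M$ itself to guarantee that $\sum_k b_k = M$ in $\mathcal S'$; this is the standard bookkeeping step and I would phrase it exactly as in \cite{Taibleson-Weiss} or \cite{AlvarezMilman}.

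The quantitative heart is estimating $\|M_k\|_{L^t(B_k)}$, hence $\|b_k\|_{L^t}$, and this is where conditions M1–M4 enter. For $k\ge1$ one has $|x-z|\ge 2^{k-1}r$ on $E_k$, so M2 (for $r>1$) or M4 (for $r\le1$) gives
$$
\|M_k\|_{L^t}^t \le (2^{k-1}r)^{-\lambda}\int_{\R^n}|M(x)|^t|x-z|^\lambda\,dx \lesssim 2^{-k\lambda}\, r^{n(1-t/p)}
$$
in the first case, and analogously $\lesssim 2^{-k\lambda} r^{\rho\lambda + n[\rho(1-t/s)+t(1/q-1/p)]}\cdot r^{-\rho\lambda}$ in the second; for $k=0$ one uses M1 (resp. M3) directly. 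The polynomial correction $P_k$ obeys $\|P_k\|_{L^t(B_k)}\lesssim |B_k|^{-1/t'}\int_{B_k}|M_k|\lesssim \|M_k\|_{L^t(B_k)}$ by Hölder and the uniform boundedness of the dual basis, so $\|b_k\|_{L^t}\lesssim \|M_k\|_{L^t}$ with the same decay. Normalising, define $a_k = b_k / (C\,\gamma_k)$ with $\gamma_k$ chosen so that $\|a_k\|_{L^t}\le |B_k|^{1/t-1/p}$; unwinding the exponents, in the case $r>1$ this forces (up to constants) $\gamma_k \approx 2^{-k[\lambda/t - n(1/t - 1/p)]}$, and the admissible range $\lambda > n(t/p - 1)$ in Definition 3.2 is exactly what makes the bracket positive, so $\sum_k|\gamma_k|^p<\infty$ (a convergent geometric series). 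For $r\le1$ the analogous computation produces $\gamma_k \approx r^{n(1/p - 1/q) - \rho\lambda/t + \dots}\,2^{-k[\rho\lambda/t - \dots]}$, and here the \emph{upper} bound $\lambda \le n(t/s-1) + \frac{nt}{1-\rho}(1/q-1/s)$ is what guarantees the $r$-power is nonnegative (so it stays bounded as $r\to 0$), while the lower bound again handles the geometric decay in $k$. These two inequalities on $\lambda$ are precisely the reason Definition 3.2 is stated the way it is.

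The main obstacle — and the step I would be most careful with — is the bound $r\le1$ case: one must check that the $r$-exponent appearing in $\sum_k|\gamma_k|^p$ is genuinely nonnegative under the stated constraints on $\lambda$, $\rho$, $q$, $s$, $t$, $p$, since otherwise the $H^p$-norm would blow up for small balls (this is the delicate feature distinguishing these ``strongly singular'' molecules from classical ones, and the whole point of introducing the parameter $\rho$ and the two-sided restriction on $\lambda$). A secondary technical point is justifying convergence $\sum_k\gamma_k a_k = M$ in $\mathcal S'$ rather than merely formally: since $\sum_k|\gamma_k|^p<\infty$ and $0<p\le1$ also $\sum_k|\gamma_k|<\infty$, and each $a_k$ is an $L^t$ function supported in $B_k$ with $\|a_k\|_{L^1}\le |B_k|^{1-1/p}\,$-controlled mass, so the partial sums form a Cauchy sequence in $\mathcal S'$; combined with the telescoping identity for the $P_k$ this gives the claimed decomposition. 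The final sentence $\|M\|_{H^p}\le C$ then follows immediately from the atomic characterisation (Theorem 2.2 in the excerpt), since $\|M\|_{H^p}^p \lesssim \sum_k|\gamma_k|^p \le C$.
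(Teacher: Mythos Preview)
Your plan follows essentially the same route as the paper's proof (which itself adapts \cite[Theorem 7.16]{GarciaFranciaWeighted} and \cite[Lemma 2.1]{AlvarezMilman}): decompose over dyadic annuli $E_j$, subtract polynomial projections $P_j$ built from a dual basis $\{\phi^j_\gamma\}$ on $E_j$, and control $\|M_j\|_{L^t}$ via (M2)/(M4). Two points deserve tightening.

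First, the claim ``$\sum_k b_k = M$'' is not what actually happens: the corrections $P_j$ do \emph{not} sum to zero. The paper writes $M=\sum_j(M_j-P_j)+\sum_j P_j$ and then decomposes $\sum_j P_j$ \emph{separately} into atoms by an Abel summation. Setting $N^j_\gamma=\sum_{k\ge j}\int_{E_k}M(x)x^\gamma\,dx$, one rewrites $\sum_j P_j=\sum_j\sum_{|\gamma|\le N_p}\psi^j_\gamma$ with $\psi^j_\gamma=N^{j+1}_\gamma\bigl(|E_{j+1}|^{-1}\phi^{j+1}_\gamma-|E_j|^{-1}\phi^j_\gamma\bigr)$; the boundary term $N^0_\gamma=\int M\,x^\gamma\,dx$ vanishes by (M5), and each $\psi^j_\gamma$ is a multiple of a $(p,\infty)$-atom with coefficient $\lesssim (2^j)^{-\lambda/t+n(1/p-1/t)}$. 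This is the step you gloss as ``standard bookkeeping,'' but your phrasing suggests the $P_k$ cancel rather than being re-packaged.

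Second, the paper uses $B_j=B(z,2^jr)$ for \emph{both} regimes $r>1$ and $r\le1$ --- it does not switch to $r^\rho$. For $r\le1$ the estimate from (M4) gives
\[
\|M_j\|_{L^t}\lesssim |B_j|^{1/t-1/p}\,(2^j)^{-\lambda/t+n(1/p-1/t)}\cdot r^{-\frac{\lambda}{t}(1-\rho)+n[\rho(1/t-1/s)+1/q-1/t]},
\]
and the extra $r$-power is nonnegative precisely because of the upper bound $\lambda\le n(t/s-1)+\frac{nt}{1-\rho}(1/q-1/s)$; this is where that bound is used directly. Your $r^\rho$-variant also works, but then the $r$-exponent in $\gamma_k$ is $n\rho(1/p-1/s)+n(1/q-1/p)$, which is $\lambda$-free; its nonnegativity follows instead from the \emph{non-emptiness} of the admissible $\lambda$-interval (equivalently $\rho>\frac{1/p-1/q}{1/p-1/s}$), not from the upper bound alone as you wrote. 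Either route closes, but be precise about which inequality does which job.
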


\begin{proof}
	This proof is inspired in \cite[Theorem 7.16]{GarciaFranciaWeighted} and \cite[Lemma 2.1]{AlvarezMilman}. Let $M(x)$ be a $(p,\rho,q , \lambda,s,t)-$molecule associated to a ball $B=B(z,r)\subset \R^n$ and define $B_j=B(z,2^jr)$, $E_j=B_j \setminus B_{j-1}$ and $M_j(x)=M(x) \chi_{_{E_j}}(x)$ for each $j \in \N \cup \{0\}$. Consider $\alpha \in \Z^{n}_{+}$ a multi-index such that $|\alpha| \leq N_p$, $P_{N}$ the finite-dimensional vector space of polynomials in $\R^n$ with degree at most $N_p$ and $P_{N,j}$ its restriction on the set $E_j$. By the Gram-Schmidt orthogonalization process on the Hilbert space $\mathcal{H}:=L^{2}(E_{j},|E_{j}|^{-1}dx)$, considering $P_{N,j}$ as a subspace of $\mathcal{H}$, with respect to the base $\{ x^{\beta} \}_{|\beta|\leq N_p}$ there exist polynomials $\phi_{\gamma}^{j}(x)$ defined on $P_{N,j}$ uniquely determined such that
	\begin{equation} \label{AF-2}
		\dfrac{1}{|E_j|} \int_{E_j}{\phi_{\gamma}^{j}(x) \  x^{\beta}dx}=\delta_{\gamma,\beta}=\left\{\begin{array}{rc}
			1 &\mbox{if}\quad \gamma=\beta \\
			0 &\mbox{if}\quad \gamma \neq \beta.
		\end{array}\right.
	\end{equation}
	In addition, these polynomials satisfies the estimate $(2^jr)^{|\gamma|} |\phi_{\gamma}^{j}(x)| \leq C,\,\, \forall \, x \in E_{j}$, uniformly on $j$ ({see \cite[p. 332]{GarciaFranciaWeighted}}). Set
	$$
	M_{j}^{\gamma}=\frac{1}{|E_j|} \int_{E_j}{M(x)x^{\gamma}dx}, \quad P_j(x)=\sum_{|\gamma| \leq N_p}{M_{j}^{\gamma}\phi_{\gamma}^{j}(x)},
	$$ 
	and  split
	$$
	M =  \sum_{j=0}^{\infty}{M_j} = \sum_{j=0}^{\infty}{(M_j-P_j)}+ \sum_{j=0}^{\infty}{P_j}
	$$
	where the convergence is given in $L^t(\R^n)$. 	We will show that $(M_j-P_j)$ is a multiple of a $(p,t)-$atom and $P_j$ can be written as a finite linear combination of $(p,\infty)-$atoms for each $j$.
	
	Suppose first $r \leq 1$. Since $M_j$ and $P_j$ are both supported in $E_j \subset B_j$ clearly $\supp(M_j-P_j) \subset B_j$ and also for all $|\alpha| \leq N_p$ it has the right cancellation property from
	\begin{align}
		&\int_{\R^n}\left( M_j(x)-P_j(x)\right) x^{\alpha}dx = \int_{E_j}{\left( M_j(x) - \sum_{|\gamma| \leq N_p}{M_{\gamma}^{j} \phi_{\gamma}^{j}(x)} \right)x^{\alpha}dx} \nonumber \\
		& \quad =  \int_{E_j}{M(x)x^{\alpha}dx}-\sum_{|\gamma|\leq N_p}{\left( \int_{E_j}{M(y)y^{\gamma}dy} \right)\frac{1}{|E_j|} \int_{E_j}{\phi_{\gamma}^{j}(x)x^{\alpha}dx}} = 0 \label{AF-4}
	\end{align}
	In order to estimate the $L^t-$norm of $M_{j}$, follows from condition (M4) that
	{\begin{align}
			\| M_j \|_{L^t} & \leq (2^jr)^{-\frac{\lambda}{t}} \left( \int_{E_j}{|M(x)|^t|x-z|^\lambda dx}\right)^{\frac{1}{t}} \nonumber \\
			& \lesssim \, |B_j|^{\frac{1}{t}-\frac{1}{p}} \,\, (2^j)^{-\frac{\lambda}{t}+n \left( \frac{1}{p}-\frac{1}{t} \right)} \,\, r^{-\frac{\lambda}{t}(1-\rho) + n \left[ \rho \left( \frac{1}{t}-\frac{1}{s} \right)+ \frac{1}{q}-\frac{1}{t} \right]} \nonumber \\
			& \lesssim  \, |B_j|^{\frac{1}{t}-\frac{1}{p}} \,\, (2^j)^{-\frac{\lambda}{t}+n \left( \frac{1}{p}-\frac{1}{t} \right)}  \label{M-j-estimation} 
	\end{align}}
	since {$\lambda \leq n \left( \frac{t}{s}-1 \right)+ \frac{n\,t}{1-\rho}\left(\frac{1}{q}-\frac{1}{s}\right)$}. On the other hand
	\begin{align}
		|P_j(x)|  &\leq  \sum_{|\gamma|\leq d} |\phi_{\gamma}^{j}(x)|  {\dfrac{1}{|E_j|}\int_{E_j}{|M(y)| \,\, |y|^{|\gamma|}} \,\,}dy \nonumber \\
		& \leq  \left( \sum_{|\gamma|\leq d}{(2^jr)^{|\gamma|}| \phi_{\gamma}^{j}(x)|} \right) \frac{1}{|E_j|} \int_{E_j}{|M(y)|dy} \nonumber \\
		& \lesssim  \frac{1}{|E_j|} \int_{E_j}{|M_j(y)|dy} \nonumber \\
		& \leq  |E_j|^{-\frac{1}{t}} \ \| M_j \|_{L^t}. \label{AF-5}
	\end{align}
	From estimates \eqref{M-j-estimation} and \eqref{AF-5} we obtain
	$$
	\| M_j-P_j \|_{L^t} \leq 2 \|M_j\|_{L^t} \lesssim |B_j|^{\frac{1}{t}-\frac{1}{p}} \,\, (2^j)^{-\frac{\lambda}{t}+n \left( \frac{1}{p}-\frac{1}{t} \right)}.
	$$
	Finally, write $(M_j-P_j)(x) = d_j \, A_j(x)$ where $d_j = \| M_j-P_j \|_{L^t} \, |B_j|^{\frac{1}{p}-\frac{1}{t}}$ and 
	$$
	A_j(x)=\dfrac{M_j(x)-P_j(x)}{\| M_j-P_j \|_{L^t}} \, |B_j|^{\frac{1}{t}-\frac{1}{p}}.
	$$
	By the previous considerations it is clear that $A_j$ is a $(p,t)-$atom and in addition
	\begin{align}
		\sum_{j=0}^{\infty}{|d_j|^p} = \sum_{j=0}^{\infty}{\|M_j-P_j\|_{L^t}^{p} \, |B_j|^{1-\frac{p}{t}}}
		& \leq  \sum_{j=0}^{\infty}{\left[ |B_j|^{\frac{1}{t}-\frac{1}{p}} \, (2^j)^{-\frac{\lambda}{t}+n \left( \frac{1}{p}-\frac{1}{t} \right)} \right]^p |B_j|^{1-\frac{p}{t}}} \nonumber \\
		& = \sum_{j=0}^{\infty}{(2^j)^{-\frac{\lambda p}{t}+n \left( 1-\frac{p}{t} \right)}} < \infty \nonumber 
	\end{align}
	since $\lambda> n \left( {t}/{p}-1 \right)$.
	
	We show now that $P_j$ is a finite linear combination of $(p,\infty)-$atoms. Define for each $j \in \N \cup \{ 0 \}$
	$$
	N_{\gamma}^{j} :=|E_k|\sum_{k=j}^{\infty}{M^{k}_{\gamma}}=\sum_{k=j}^{\infty}{\int_{E_k}{M(x)x^{\gamma}dx}}
	$$
	and
	$$
	\psi_{\gamma}^{j}(x):=N_{\gamma}^{j+1} \left[ |E_{j+1}|^{-1} \phi_{\gamma}^{j+1}(x)-|E_j|^{-1}\phi_{\gamma}^{j}(x) \right].
	$$
	Then, we can represent $P_j(x)$ in the following way
	\begin{align*}
		\sum_{j=0}^{\infty}{P_j(x)} & = \sum_{j=0}^{\infty}{ \sum_{|\gamma|\leq N_p}{(M_{\gamma}^{j}|E_j|)\,(|E_j|^{-1}\phi_{\gamma}^{j}(x))}} \\
		& = \sum_{|\gamma|\leq N_p}{\left\{ \sum_{j=0}^{\infty}{\psi_{\gamma}^j(x)} + \sum_{j=0}^{\infty}{\left[ N_{\gamma}^{j}|E_j|^{-1}\phi_{\gamma}^j(x)-N_{\gamma}^{j+1}|E_{j+1}|^{-1}\phi_{\gamma}^{j+1}(x) \right]} \right\}} \nonumber \\
		& = \sum_{j=0}^{\infty}{\sum_{|\gamma|\leq N_p}{\psi_{\gamma}^{j}(x)}}, \nonumber
	\end{align*}
	since 
	$$
	\sum_{j=0}^{\infty}{\left[ N_{\gamma}^{j}|E_j|^{-1}\phi_{\gamma}^j(x)-N_{\gamma}^{j+1}|E_{j+1}|^{-1}\phi_{\gamma}^{j+1}(x) \right]}=N_{\gamma}^{0}=\int_{\R^n}{M(x)x^{\gamma}dx}=0
	$$ 
	for all $|\gamma|\leq N_p$. We claim that $\psi_{\gamma}^j(x)$ is a multiple of a $(p,\infty)-$atom. 
	By definition $\supp(\psi_{\gamma}^j) \subset E_j \subset B_j $ and moreover 
	\begin{equation} \label{AF-07}
		\int _{\mathbb{R}^n}\psi_{\gamma}^j(x)x^{\beta}dx = N_{\gamma}^{j+1}\left[\frac{1}{|E_{j+1}|}\int_{E_{j+1}}\phi_{\gamma}^{j+1}(x)x^{\beta}dx-\frac{1}{|E_{j}|}\int_{E_{j}}\phi_{\gamma}^{j}(x)x^{\beta}dx\right]=0
	\end{equation}
all $|\beta| \leq N_p$. In order to control $ \| \psi_{\gamma}^{j} \|_{L^\infty}$ it is enough to estimate $N_{\gamma}^{j+1}$.
	Then, by H\"older inequality and \eqref{M-j-estimation}  we have
	\begin{align*} 
		|N_{\gamma}^{j+1}|  =  \left|\sum_{k=j+1}^{\infty}{ \int_{E_k}{M(x)x^{\gamma}dx}} \right| 
		&\leq \sum_{k=j+1}^{\infty}{(2^kr)^{|\gamma|}\int_{E_k}{|M_k(x)|dx}} \\
		&\leq  \sum_{k=j+1}^{\infty}{(2^kr)^{|\gamma|} \ \| M_k \|_{L^t} \ |E_k|^{1-\frac{1}{t}}}\\
		& \lesssim  {r^{|\gamma|+n \left( 1-\frac{1}{p} \right)} \sum_{k=j+1}^{\infty}  (2^k)^{|\gamma|-\frac{\lambda}{t}+n\left(1-\frac{1}{t}\right)}} .
	\end{align*}
	Since $|\gamma|\leq n \left( {1}/{p}-1 \right)$ and $\lambda> n \left( {t}/{p}-1 \right)$ the series above converges and	
	\begin{align} \nonumber 
		|N_{\gamma}^{j+1}| \lesssim \sum_{k=j+1}^{\infty}{(2^kr)^{|\gamma|+n \left( 1-\frac{1}{p} \right) } \, (2^k)^{-\frac{\lambda}{t}+n \left( \frac{1}{p}-\frac{1}{t} \right)}} \lesssim  |B_j|^{1-\frac{1}{p}} \, (2^jr)^{|\gamma|} \, (2^j)^{-\frac{\lambda}{t}+n\left( \frac{1}{p}-\frac{1}{t} \right)} .
	\end{align}
	In that way, using that $(2^jr)^{|\gamma|} |\phi_{\gamma}^j(x)|\leq C$ uniformly in $j$ and the previous control follows
	$$
	|N^{j}_{\gamma}|E_j|^{-1}\phi_{\gamma}^{j}(x)| \lesssim  |B_j|^{-\frac{1}{p}}(2^j)^{-\frac{\lambda}{t}+n \left( \frac{1}{p}-\frac{1}{t} \right)}.
	$$
	Denote $\psi_{\gamma}^{j}(x)=h_{j} \,\, B_{j\gamma}(x)$ where $h_{j}=(2^j)^{-\frac{\lambda}{t}+n \left( \frac{1}{p}-\frac{1}{t} \right)}$ and $B_{j\gamma}(x)=k_j \, \psi_{\gamma}^{j}(x)$ for $k_j = (2^{j})^{\frac{\lambda}{t}-n \left( \frac{1}{p}-\frac{1}{t} \right)}$. It is clear that $B_{j\gamma}$ is a multiple of a $(p,\infty)$-atom since $\supp(B_{j\gamma}) \subset B_j$, $\|B_{j\gamma}(x)\|_{L^\infty} \lesssim |B_j|^{-\frac{1}{p}}$ and the moment condition follows immediately from \eqref{AF-07}. Clearly
	\begin{equation*}
		\sum_{j=0}^{\infty}{|h_{j}|^p} = \sum_{j=0}^{\infty}{(2^j)^{-\frac{\lambda p}{t}+n \left( 1-\frac{p}{t} \right)}}< \infty.
	\end{equation*}

	The case $r > 1$ follows the same steeps with crucial control 
	\begin{align*}
		\| M_j \|_{L^t}  \lesssim  (2^j)^{-\frac{\lambda}{t}} \ r^{\frac{\lambda}{t}+n \left( \frac{1}{t}-\frac{1}{p}\right)} 
		\lesssim  |B_j|^{\frac{1}{t}-\frac{1}{p}} \ (2^j)^{-\frac{\lambda}{t}+n \left( \frac{1}{p}-\frac{1}{t} \right)}
	\end{align*}
	from condition (M2). We point out that the case $t=1$ follows by the same argument as before with the restriction $0<p<1$.
	
	Summarizing we have $\displaystyle M(x) = \sum_{j =0}^{\infty}{\gamma_j a_j(x)}$ converges in $L^{t}(\R^n)$, consequently in $\mathcal{S}'(\R^n)$, where $a_{j}$ are $(p,t)-$atoms and  $\{ \gamma_j \}_{j \in \N}$ is a sequence of complex scalars such that $\left(\sum_{j}{|\gamma_j|^p}\right)^{1/p}\leq C$, with constant independent of $M$. Clearly the series converges in $H^p(\R^n)$ 
	and $\| M \|_{H^p} \leq C$.
\end{proof}

\section{Proof of Theorems \ref{teorema_s-hormander} and \ref{theorem-Linf-BMO} }\label{secfour}

We starting with a notation that will be useful in the proof of Theorem \ref{teorema_s-hormander}.

\begin{definition} \label{parametros_admissiveis_s}
	Let $\lambda>0$, $1\leq s_1 < \infty$, $1 < s_2 < \infty$, $s_1 \leq s_2$, $0<p, \rho < 1$, $0<\sigma \leq 1$ and $\beta$ such that $(1-\sigma)\left( 1-\dfrac{1}{s_2} \right)n \leq \beta < \left( 1-\dfrac{1}{s_2} \right)n$. We say that $(p,\rho, \beta , \lambda,s_1,s_2)$ are admissible parameters when the relation 
	$$
	\displaystyle{n \left( \frac{s_1}{p}-1 \right) < \lambda \leq n \left( \frac{s_1}{s_2}-1\right) + \frac{s_1\beta}{(1-\rho)} }
	$$
	holds. For $p=1$ we restrict $1<s_1<\infty$.
\end{definition}

\begin{proof}[Proof of Theorem \ref{teorema_s-hormander}]
	Let $a(x)$ be a $(p,\infty)-$atom supported on $B(z,r) \subset \R^n$. From Lemma \ref{molecular_s_decomposition}, it is enough to show that $Ta$ is a $(p,\rho, q , \lambda,s_2,s_1)-$molecule where 
	$$
	\frac{1}{q}=\frac{1}{s_2}+\frac{\beta}{n} \quad \quad   \text{and} \quad   \quad (p,\rho, \beta , \lambda,s_1,s_2) \,\,\, \text{are admissible parameters}
	$$
	for some $\rho \leq \sigma$ to be chosen. We will restrict ourselves to the case $0<p\leq 1$ and $1<s_1 \leq 2$. The comments for $s_1=1$ will be done at the end. Suppose first $r > 1$. From Remark \ref{remark-molecules}-(b), we will show that (M1a) and (M2a) holds. Since $T$ is bounded on $L^2(\R^n)$ we have
	\begin{align*} \label{Ls_molecula}
		\int_{B(z,2r)}{|Ta(x)|^{s_1}dx} &\leq |B(z,2r)|^{1-\frac{s_1}{2}} \, \| Ta \|_{L^2}^{s_1} 
		\lesssim  \, |B(z,2r)|^{1-\frac{s_1}{2}} \, \| a \|_{L^2}^{s_1} 
		\lesssim  \ r^{n \left(1-\frac{s_1}{p}  \right)}.
	\end{align*}
	To estimate (M2a), the moment condition of the atom allow us to write
	\begin{align*}
		\int_{\R^n \setminus B(z,2r)}|Ta(x)|^{s_1}&|x-z|^{\lambda}dx \\
		&= \sum_{j=0}^{\infty}{\int_{C_j(z,r)}{\left| \int_{B(z,r)}{[K(x,y)-K(x,z)]a(y)dy} \right|^{s_1} \, |x-z|^{\lambda}dx}}.
	\end{align*}
	Then, from Minkowski inequality for integrals, H\"older's inequality and the assumption \eqref{s-hormander-1} we may control the previous sum as 
	\begin{align*}
		& \sum_{j=0}^{\infty}{\left\{ \left[ \int_{C_j(z,r)}{ \left( \int_{B(z,r)}{|K(x,y)-K(x,z)|\,\, |a(y)| \,\, |x-z|^{\frac{\lambda}{s_1}}dy} \right)^{s_1}  dx}   \right]^{\frac{1}{s_1}} \right\}^{s_1}} \nonumber \\
		& \leq  \sum_{j=0}^{\infty}{ \left\{ \int_{B(z,r)}{ \left[ \int_{C_j(z,r)}{|K(x,y)-K(x,z)|^{s_1}\,\, |a(y)|^{s_1} \,\, |x-z|^{\lambda}dx} \right]^{\frac{1}{s_{1}}}dy } \right\}^{s_1}} \nonumber \\
		& \leq  \sum_{j=0}^{\infty}{ (2^jr)^{\lambda} \left\{ \int_{B(z,r)}{|a(y)| \left[ \int_{C_j(z,r)}{|K(x,y)-K(x,z)|^{s_1} dx} \right]^{\frac{1}{s_1}} dy} \right\}^{s_1}} \nonumber \\
		& \leq  \sum_{j=0}^{\infty}{(2^jr)^{\lambda} \,\, (2^jr)^{-n(s_1-1)} \,\, 2^{-j s_1\delta}} \,\,  \left( \int_{B(z,r)}{|a(y)|dy} \right)^{s_1} \nonumber \\
		& \leq  \sum_{j=0}^{\infty}{(2^jr)^{\lambda} \,\, (2^jr)^{-n(s_1-1)} \,\, 2^{-j s_1\delta}} \,\, r^{s_1n\left( 1-\frac{1}{p} \right)} \nonumber \\
		& = C \ r^{\lambda + n\left( 1-\frac{s_1}{p} \right)} \sum_{j=0}^{\infty}{2^{j\left[\lambda-n(s_1-1)-s_1\delta \right]}} \nonumber \\
		& =  C \ r^{\lambda + n\left( 1-\frac{s_1}{p} \right)} \nonumber 
	\end{align*}
	assuming $\lambda < n(s_1-1)+s_1\delta$ and this concludes the proof of (M1a) and (M2a) for $1<s_1\leq 2$. In the case $2<s_1<\infty$, assuming the additional hypothesis (iii) for $T^{\ast}$, we obtain from Remark \ref{1.2.a} the continuity in $L^{s_1}(\R^n)$. From this, the condition (M1a) follows by
	\begin{equation}\nonumber
		\int_{B(z,2r)}{|Ta(x)|^{s_1}dx} \lesssim  \| a \|_{L^{s_1}}^{s_1}  \lesssim  \ r^{n \left(1-\frac{s_1}{p}  \right)}
	\end{equation}
	and (M2a) follows analogously.
	
	Now, we are moving on to the case $r \leq 1$. Since $s_1 \leq s_2$ and $T$ is bounded from $L^q(\R^n)$ to $L^{s_2}(\R^n)$ we have
	\begin{align*} \label{Lq-Ls-ineq}
		\int_{B(z,r^{\rho})}{|Ta(x)|^{s_1}dx} &\leq |B(z,r^{\rho})|^{1-\frac{s_1}{s_2}} \| Ta \|_{L^{s_2}}^{s_1} 
		\lesssim |B(z,r^{\rho})|^{1-\frac{s_1}{s_2}} \| a \|_{L^q}^{s_1} \\
		& \lesssim r^{n \left[\rho \left(1-\frac{s_1}{s_2}\right)+s_1 \left(\frac{1}{q}-\frac{1}{p} \right) \right]}
	\end{align*}
	which proves (M3a). For (M4a), using the same argument as before and \eqref{s-hormander-2} follows
	\begin{align*}
		\int_{\R^n \setminus B(z,r^{\rho})}&{|Ta(x)|^{s_1}|x-z|^{\lambda}dx} \\
		& \leq \sum_{j=0}^{\infty}{ (2^jr^{\rho})^{\lambda} \left\{ \int_{B(z,r)}{|a(y)| \left[ \int_{C_j(z,r^{\rho})}{|K(x,y)-K(x,z)|^{s_1} dx} \right]^{\frac{1}{s_1}} dy} \right\}^{s_1}} \\
		& \lesssim \ \sum_{j=0}^{\infty}{(2^jr^{\rho})^{\lambda} \,\, \left(|C_j(z,r^{\rho})|^{{\frac{1}{s_1}-1+\frac{\delta}{n} \left(\frac{1}{\rho}-\frac{1}{\sigma} \right)}} \,\, 2^{-\frac{j\delta}{\rho}} \right)^{s_1} } \,\, \| a \|_{L^\infty}^{s_1} \,\, |B(z,r)|^{s_1} \nonumber \\
		& \lesssim \ \sum_{j=0}^{\infty}{(2^jr^{\rho})^{\lambda} \,\, \left(|C_j(z,r^{\rho})|^{{\frac{1}{s_1}-1+\frac{\delta}{n} \left(\frac{1}{\rho}-\frac{1}{\sigma} \right)}} \,\, 2^{-\frac{j\delta}{\rho}} \right)^{s_1} \,\, r^{s_1n\left( 1-\frac{1}{p} \right) }} \nonumber \\
		& = C \  r^{\rho\lambda + n \left[ s_1+\frac{s_1\delta}{n}-s_1\rho \left(1-\frac{1}{s_1}+\frac{\delta}{n\sigma}\right) -\frac{s_1}{p} \right]} \sum_{j=0}^{\infty}{2^{j\left[ \lambda-n(s_1-1)-\frac{s_1\delta}{\sigma} \right]}} \nonumber \\
		& \lesssim \ r^{\,\rho\lambda + n \left[ \rho \left(1-\frac{s_1}{s_2} \right)+s_1 \left(\frac{1}{q}-\frac{1}{p}\right) \right]}, \nonumber
	\end{align*}
	where the convergence of the series is evident since $0<\sigma\leq 1$, $ \lambda <n(s_1-1)+s_1\delta$ assumed previously and $\rho$ is chosen such that 
	$$ 
	s_1+\frac{s_1\delta}{n}-\rho \left(s_1-1+\frac{s_1\delta}{n\sigma}\right)= \rho \left( 1-\frac{s_1}{s_2} \right)+ \frac{s_1}{q} \ \Longleftrightarrow \ \rho := \frac{n \left( 1-\frac{1}{q}\right)+\delta}{n \left( 1-\frac{1}{s_2} \right)+\frac{\delta}{\sigma}}.
	$$ 
	We point out that $\rho \leq \sigma$ since $\beta \geq n(1-\sigma)\left( 1-{1}/{s_2} \right)$ is equivalent to ${1}/{q} \geq 1+\sigma \left({1}/{s_2}-1 \right)$ and then
	\begin{equation} \label{choice-rho}
	n \left( 1-\frac{1}{q} \right) + \delta \leq n \sigma \left( 1- \frac{1}{s_2} \right) + \delta \Longleftrightarrow \rho = \frac{n \left( 1-\frac{1}{q}\right)+\delta}{n \left( 1-\frac{1}{s_2} \right)+\frac{\delta}{\sigma}} \leq \sigma.
	\end{equation}
	Summing up, from the admissible parameters it follows that $n\left( {s_1}/{p}-1 \right)<\lambda \leq n \left( {s_1}/{s_2}-1\right) + {s_1\beta}/{(1-\rho)}$ and  $\lambda<n(s_1-1)+s_1\delta$. Note that ${s_2\beta}/{(1-\rho)}<(s_2-1)n+s_2\delta$ since
	\begin{align*}
		\beta< \frac{\delta \left( \frac{1}{\sigma}-1 \right) \left[ n\left( 1-\frac{1}{s_2} \right)+\delta  \right]}{\delta \left(\frac{1}{\sigma}-1\right)} 
		&\Longleftrightarrow  \frac{s_2\beta \left[n\left( 1-\frac{1}{s_2} \right)+\frac{\delta}{\sigma} \right]}{\beta+\frac{\delta}{\sigma}-\delta}< (s_2-1)n+s_2\delta \nonumber \\
	\end{align*}
	and this implies that
	\begin{align} \label{control-lambda}
		n \left( \frac{s_1}{s_2}-1\right) + \frac{s_1\beta}{(1-\rho)} < n(s_1-1)+s_1\delta.
	\end{align}
	This relation naturally implies a lower bound of $p$ given by 	
	\begin{equation*}
		n \left(\frac{s_1}{p}-1 \right)<n \left(\frac{s_1}{s_2}-1 \right)+ \frac{s_1\beta}{1-\rho} \Longleftrightarrow \frac{1}{p}< \frac{1}{s_2}+\frac{\beta \left[ \frac{\delta}{\sigma}+n\left(1-\frac{1}{s_2}\right)\right]}{n\left(\frac{\delta}{\sigma}-\delta+\beta\right)}:=\frac{1}{p_{_0}}.
	\end{equation*}
	The argument for $s_1=1$ and $0<p<1$ follows in the same way with minor changes.
	
	Clearly $n/(n+\delta)<p_{_0}$ thus for $p_{_0}<p\leq 1$ follows $n(1/p-1)<\delta$. Since $T^{\ast}(x^{\alpha})=0$ for $|\alpha| \leq \lfloor \delta \rfloor$, then (M5) is trivially valid.
\end{proof}

\begin{proof}[Proof of Theorem \ref{theorem-Linf-BMO}]
	Let $f \in L^{\infty}(\R^n)$. In order to prove that $T$ maps continuously $L^{\infty}(\R^n)$ into $BMO(\R^n)$ we need to show that for any ball $B \subset \R^n$ there exist a constant $a_B$ (may depend on $B$) such that
	$$
	\sup_{B}{ \frac{1}{|B|} \int_{B}|Tf(x)-a_B|dx} \leq C \, \| f \|_{L^{\infty}}
	$$
	where $C>0$ is a constant independent of $B$ (the left-hand side of the previous inequality define a norm in $BMO(\R^n)$). To do so, let $B:=B(z,r) \subset \R^n$ and suppose $r\leq 1$. Split $f$ into
	$$
	f = f \chi_{_{B(z,2r^{\sigma})}} + f \chi_{_{\R^n \setminus B(z,2r^{\sigma})}} := f_1+f_2.
	$$
	Since $T^{\ast}: L^q(\R^n) \rightarrow L^{s_2}(\R^n)$ is bounded, then $T: L^{s_2'}(\R^n) \rightarrow L^{q'}(\R^n)$ will also be bounded for $\dfrac{1}{s_2'}=1-\dfrac{1}{s_2}$ and $\dfrac{1}{q'}=\dfrac{1}{s_2'}-\dfrac{\beta}{n}$. In particular, since $f_1 \in L^{s_2'}(\R^n)$ then $Tf_1$ is well defined, belongs to $L^{q'}(\R^n)$ and 
	\begin{equation}
		\int_{B(z,r)}{|Tf_1(x)|dx} \leq  |B(z,r)|^{\frac{1}{q}}   \| Tf_1 \|_{L^{q'}} 
		\lesssim   |B(z,r)|^{\frac{1}{q}}  \| f_1 \|_{L^{s_{2'}}}
		\lesssim    |B(z,r)|^{\frac{1}{q}+\frac{\sigma}{s_{2'}}}  \| f \|_{L^\infty}.
	\end{equation}
	Since ${1}/{q}+{\sigma}/{s_2'}-1\geq 0$ and $r\leq 1$ we have $|B(z,r)|^{\frac{1}{q}+\frac{\sigma}{s_2'}} \lesssim1$. Then
	\begin{equation} \label{Linf_BMO1}
		\frac{1}{|B(z,r)|} \int_{B(z,r)}{|Tf_1(x)|dx} \leq C \ \| f \|_{L^\infty}.
	\end{equation}
	For $f_2$ we use condition \eqref{s-hormander-2a} to show 
	\begin{align*}
		\int_{\R^n}{|K(x,y)-K(z,y)| \,\, |f_2(y)|dy} & \leq \ \| f \|_{L^\infty} \int_{|y-z|>2r^{\sigma}}{|K(x,y)-K(z,y)|dy} \\
		& =  \ \| f \|_{L^\infty} \sum_{j=0}^{\infty}{\int_{C_j(z,r^{\sigma})}{|K(x,y)-K(z,y)|dy}} \\
		&\leq \ \| f \|_{L^\infty} \sum_{j=0}^{\infty}{2^{-\frac{j\delta}{\sigma}}} 
		\lesssim     \| f \|_{L^\infty}
	\end{align*}
	and from previous estimate
	\begin{equation} \label{Linf_BMO2}
		\frac{1}{|B(z,r)|} \int_{B(z,r)}{|Tf_2(x)-Tf_2(z)|dx} \leq C \ \| f \|_{L^\infty}.
	\end{equation}
	Hence, we choose $a_Q := Tf_2(z)$ and from \eqref{Linf_BMO1} and \eqref{Linf_BMO2} we conclude
	\begin{align*}
		\frac{1}{|B(z,r)|} \int_{B(z,r)}{|Tf(x)-Tf_2(z)|dx} &\leq \frac{1}{|B(z,r)|} \int_{B(z,r)}{|Tf_1(x)|+|Tf_2(x)-Tf_2(z)|dx} \\
		&\leq C \, \| f \|_{L^{\infty}}.
	\end{align*}
	The proof for $r>1$ is analogous if we split $f$ in $B(z,2r)$ and $\R^n \setminus B(z,2r)$. We point out that only \eqref{s-hormander-1a} and $L^2-$boundedness of $T$ is required for this case.
\end{proof}

\subsection{Comments and remarks} \label{comment}

The conclusion of Theorem \ref{teorema_s-hormander} is still open for $p=p_{_0}$, however under assumption $s=2$ and $D_{1}$ condition, the \cite[Theorem 3.9]{AlvarezMilmanVectorValued} asserts that  $T$ can be extended to a bounded operator from $H^{p}(\R^n)$ to $L^{p}(\R^n)$ for $p_{_{0}} \leq p\leq 1$. An extension of this result is presented at Corollary \ref{coro}.

We point out the condition  $T^{\ast}(x^{\alpha})=0$ for $|\alpha| \leq \lfloor \delta \rfloor$ assumed at Theorem  \ref{teorema_s-hormander}  can be refined to $|\alpha| \leq N_{p_{_0}}:= \lfloor n(1/p_{_0}-1) \rfloor$.  Furthermore, this assumption is a necessary condition at Theorem \ref{teorema_s-hormander}. Suppose that $T$ maps continuously $H^p(\R^n)$ to itself for all $p_{_0}<p\leq1$ and let $f \in L^{2}_{\#, N_{p_{_0}}}(\R^n)$. Since $f$ is a multiple of a $(p,2)-$atom follows $Tf$ satisfies the conditions (M1)-(M4) as a particular case of Proposition \ref{molecule-local-integrable}, thus $Tf \in L^1(\R^n) \cap H^p(\R^n)$. Hence, by \cite[Sec. 5.4 (c) p.128 ]{SteinHarmonic} it follows that
$$
\int{Tf(x)x^{\alpha}dx} = 0, \ \text{for all} \ |\alpha|\leq N_p \ \text{and} \ p_{_0}<p\leq1.
$$
Therefore it will also holds for $N_{p_{_0}}$ since $p \searrow p_{_0}$.

In Theorem \ref{theorem-Linf-BMO}, the crucial hypothesis that $T^{\ast}$ is a bounded operator from $L^q(\R^n)$ to $L^{s_2}(\R^n)$ may be weakened by the condition
$$
|B(z,r)|^{-\frac{1}{q}} \int_{B(z,r)}{|Tf(x)|dx} \leq C \, \| f \|_{L^{{s_2}' }}
$$
i.e. $T$ maps continuously $L^{{s_2}' }(\R^n)$ into $\mathcal{M}^{1}_{\lambda}(\R^n)$ where ${1}/{\lambda}={1}/{s_2'}-{\beta}/{n}$ and 
$$
\mathcal{M}^{1}_{\lambda}(\R^n)=\left\{ f \in L^{1}_{loc}(\R^n): \, \sup_{0<r \leq 1} |B(z,r)|^{\frac{1}{\lambda}-1} \int_{B(z,r)}{|f(x)|dx}< \infty \right\}
$$
denotes the local Morrey-space with $\lambda>1$.

\subsection{Weaker integral derivative conditions} \label{section-derivative-conditions}

In this section, we consider \textit{strongly singular Calder\'on-Zygmund operators of type $\sigma$} associated  kernels satisfying derivative conditions. Let $\delta >0 $ and $K \in C^{\lfloor \delta \rfloor}$ away the diagonal on $\R^{2n}$ satisfying 

\begin{equation} \label{derivative-condition}
	|\partial_{y}^{\gamma}K(x,y)-\partial_{y}^{\gamma}K(x,z)| + |\partial_{y}^{\gamma}K(y,x)-\partial_{y}^{\gamma}K(z,x)| \leq C \frac{|y-z|^{\delta-\lfloor \delta \rfloor}}{|x-z|^{n+\frac{\delta}{\sigma}}}, 
\end{equation}
for $\gamma \in \Z^{n}_{+}$ with  $|\gamma|=\lfloor \delta \rfloor$, $|x-z| \geq 2|y-z|^{\sigma}$ and $0<\sigma \leq 1$.

Condition \eqref{derivative-condition} is a natural generalization of derivative conditions usually assumed on standard $\delta-$kernels of type $\sigma$  
satisfying \eqref{pontual_kernel_tipo_sigma} (see \cite[p. 320]{GarciaFranciaWeighted} and \cite[p. 117]{SteinHarmonic}). In the same way, we may replace the weaker integral $D_{s}$ condition 
\eqref{s-hormander-1} and \eqref{s-hormander-2} by the \textit{derivative $D_{s}$ condition} 

\begin{align}\label{hh1}
	&\left( \int_{C_j(z,r)}{|\partial_{y}^{\gamma}K(x,y)-\partial_{y}^{\gamma}K(x,z)|^s+|\partial_{y}^{\gamma}K(y,x)-\partial_{y}^{\gamma}K(z,x)|^sdx} \right)^{\frac{1}{s}} \nonumber  \\  
	& \quad \quad \quad \quad \quad \quad \quad \quad \quad \quad \quad \quad \quad \quad \quad \quad \lesssim \, r^{-\lfloor \delta \rfloor} \, |C_j(z,r)|^{\frac{1}{s}-1} \, 2^{-j\delta} 
\end{align}
if $r>1$ and
\begin{align}\label{hh2}
	&\left( \int_{C_j(z,r^{\rho})}{|\partial_{y}^{\gamma}K(x,y)-\partial_{y}^{\gamma}K(x,z)|^s+|\partial_{y}^{\gamma}K(y,x)-\partial_{y}^{\gamma}K(z,x)|^sdx} \right)^{\frac{1}{s}} \nonumber  \\  
	& \quad \quad \quad \quad \quad \quad \quad \quad \quad \quad \quad \quad \quad \lesssim \, r^{-\lfloor \delta \rfloor} \, |C_j(z,r^{\rho})|^{\frac{1}{s}-1+\frac{\delta}{n} \left(\frac{1}{\rho}-\frac{1}{\sigma}  \right)}  2^{-\frac{j\delta}{ \rho}}
\end{align}
if $r<1$.

We announce the following self-improvement of Theorem \ref{teorema_s-hormander}:

\begin{theorem} \label{theorem-derivative-condition}
	Let $T: \mathcal{S}(\R^{n}) \rightarrow \mathcal{S}'(\R^{n})$ be a bounded linear operator and suppose that $T$ satisfies assumptions (i) and (iii) from Theorem \ref{teorema_s-hormander} and
	\begin{enumerate}
		\item[\textnormal{(ii)'}] $T$ is associated to a kernel satisfying the derivative $D_{s_{1}}$ condition \eqref{hh1} and \eqref{hh2}; 
	\end{enumerate}
	Then, if $T^{\ast}(x^{\alpha})=0$ for all $|\alpha| \leq \lfloor \delta \rfloor $, $1< s_1 \leq 2$ and $s_1 \leq s_2$, the operator $T$ is bounded from $H^p(\R^n)$ to itself for $p_{_0}<p \leq 1$, where $p_{0}$ is given \eqref{pcritico}. Moreover, if $T^{*}$ also satisfies (iii) then the conclusion holds for $1<s_{1}<\infty$ and $s_{1} \leq s_{2}$. The case $s_{1}=1$ also holds, however only for $p_{_{0}}<p<1$.
\end{theorem}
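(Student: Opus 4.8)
The plan is to run the proof of Theorem~\ref{teorema_s-hormander} essentially unchanged, substituting the derivative conditions \eqref{hh1}--\eqref{hh2} wherever \eqref{s-hormander-1}--\eqref{s-hormander-2} were used there. So, given a $(p,\infty)$-atom $a$ supported in a ball $B=B(z,r)$, with $1/q=1/s_2+\beta/n$, with $\rho$ chosen as in \eqref{choice-rho}, and with $(p,\rho,\beta,\lambda,s_1,s_2)$ admissible in the sense of Definition~\ref{parametros_admissiveis_s}, the goal is to show that $Ta$ is a $(p,\rho,q,\lambda,s_2,s_1)$-molecule and then invoke Lemma~\ref{molecular_s_decomposition}. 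Conditions (M1a) and (M3a) involve only the $L^2$- and the $L^{q}$--$L^{s_2}$-boundedness of $T$, so their verification is copied verbatim from Theorem~\ref{teorema_s-hormander}; condition (M5) follows, exactly as there, from the hypothesis $T^{\ast}(x^{\alpha})=0$ for $|\alpha|\le\lfloor\delta\rfloor$ together with $N_p\le\lfloor\delta\rfloor$ (valid because $p>p_{_0}>n/(n+\delta)$), the integrability of $x^{\alpha}Ta$ needed to make sense of this being obtained by the argument of Proposition~\ref{estimative-L1-moments} with \eqref{hh1}--\eqref{hh2} replacing the $D_1$ condition.

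The one genuinely new computation is that of (M2a) and (M4a). On $\R^n\setminus 2B$ (respectively $\R^n\setminus B(z,2r^{\rho})$) I would replace the first difference $K(x,y)-K(x,z)$ appearing in Theorem~\ref{teorema_s-hormander} by a Taylor expansion of $y\mapsto K(x,y)$ centred at $z$, taken to the order matched to the vanishing moments of $a$, using those moments to discard the Taylor polynomial and --- if necessary --- reducing the intermediate-order derivatives in the integral remainder to the top order $\lfloor\delta\rfloor$. One is then left with a quantity of the schematic form
\[
Ta(x)=\sum_{|\gamma|=\lfloor\delta\rfloor}c_{\gamma}\int_{B}a(y)\,(y-z)^{\gamma}\int_{0}^{1}(1-t)^{\lfloor\delta\rfloor-1}\bigl[\partial_{y}^{\gamma}K(x,z+t(y-z))-\partial_{y}^{\gamma}K(x,z)\bigr]\,dt\,dy .
\]
From here the argument duplicates the one in Theorem~\ref{teorema_s-hormander}: split $\R^n\setminus 2B$ into the annuli $C_j(z,r)$ (resp. $\R^n\setminus B(z,2r^{\rho})$ into the $C_j(z,r^{\rho})$), apply Minkowski's integral inequality and then Hölder's inequality in $y$, and use \eqref{hh1} (resp. \eqref{hh2}) on the innermost integral over $x\in C_j$, which is legitimate because $|(z+t(y-z))-z|=t|y-z|<r$. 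The factor $|(y-z)^{\gamma}|\le r^{\lfloor\delta\rfloor}$ absorbs precisely the extra $r^{-\lfloor\delta\rfloor}$ carried by \eqref{hh1}--\eqref{hh2}, so the resulting geometric sums, the constraint $\lambda<n(s_1-1)+s_1\delta$, the choice of $\rho$ in \eqref{choice-rho}, and hence the threshold $p_{_0}$ of \eqref{pcritico}, all come out exactly as in Theorem~\ref{teorema_s-hormander}. The range $2<s_1<\infty$, under the extra hypothesis (iii) for $T^{\ast}$, uses the $L^{s_1}$-boundedness of $T$ from Remark~\ref{1.2.a}, and the case $s_1=1$, $p_{_0}<p<1$, is the same minor variant as before.

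The main obstacle is making the Taylor step honest. A $(p,\infty)$-atom annihilates only polynomials of degree $\le N_p$, and $N_p$ can be strictly smaller than $\lfloor\delta\rfloor$ (indeed $N_p=0$ for $p$ near $1$), so one must take the expansion at the order dictated by $N_p$, then verify that the derivatives which actually survive in the remainder are governed by \eqref{hh1}--\eqref{hh2} --- disposing of the polynomial part using the moment conditions on $a$ together with the hypothesis $T^{\ast}(x^{\alpha})=0$ for $|\alpha|\le\lfloor\delta\rfloor$ --- and finally redo the power-of-$r$ bookkeeping so that (M2a) and (M4a) emerge with exactly the exponents prescribed in Definition~\ref{s-molecule}. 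Every other step is a transcription of the proof of Theorem~\ref{teorema_s-hormander}.
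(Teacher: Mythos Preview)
Your approach is essentially the paper's: Taylor-expand $K(x,\cdot)$ about $z$ to order $\lfloor\delta\rfloor$, use the atom's vanishing moments to kill the Taylor polynomial, and then rerun the proof of Theorem~\ref{teorema_s-hormander} with $\partial_y^{\gamma}K$ in place of $K$. The paper uses the Lagrange form of the remainder rather than the integral one, writing
\[
Ta(x)=\int_{B(z,r)}R(x,y)\,a(y)\,dy,\qquad R(x,y)=\sum_{|\gamma|=\lfloor\delta\rfloor}\frac{(y-z)^{\gamma}}{\gamma!}\bigl[\partial_y^{\gamma}K(x,\xi_y)-\partial_y^{\gamma}K(x,z)\bigr],
\]
with $\xi_y$ on the segment from $z$ to $y$; this is only a cosmetic difference from your displayed formula.

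The ``main obstacle'' you raise in your final paragraph is not a genuine one, and your proposed workaround (expanding only to order $N_p$ and then trying to reach $|\gamma|=\lfloor\delta\rfloor$ by other means) is unnecessary. The atomic decomposition of $H^p(\R^n)$ can always be taken with $(p,\infty)$-atoms carrying vanishing moments up to any fixed order $N\ge N_p$; the resulting Hardy space is the same (see e.g.\ \cite[Chapter III]{GarciaFranciaWeighted} or \cite[Chapter III]{SteinHarmonic}). Since $p>p_{_0}>n/(n+\delta)$ forces $N_p\le\lfloor\delta\rfloor$, simply work from the outset with atoms having vanishing moments up to order $\lfloor\delta\rfloor$. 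Then the full Taylor polynomial of degree $\lfloor\delta\rfloor$ integrates to zero against $a$, your displayed identity for $Ta(x)$ (equivalently the paper's) is exact, and the hypothesis $T^{\ast}(x^{\alpha})=0$ is used only where it was in Theorem~\ref{teorema_s-hormander}, namely to secure (M5). With that adjustment the rest of your bookkeeping is correct and matches the paper.
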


\noindent The proof of the previous result is analogous of Theorem \ref{teorema_s-hormander} since Taylor's formula allows us to write $Ta(x) = \int_{B(z,r)} R(x,y)a(y)dy$, in which 
$$
R(x,y) = \sum_{|\gamma|=M} \frac{(y-z)^{\gamma}}{\gamma !} \left[ \partial_{y}^{\gamma}K(x,\xi_{y})-\partial_{y}^{\gamma}K(x,z) \right]
$$
for some $\xi_{y}$ in the line segment between $y$ and $z$.

Examples of operators satisfying such kernel conditions will be discussed in Section \ref{subsection:pseudo}.
%
%

\section{Applications} \label{section_aplication}

\subsection{Weighted continuity} \label{subsection:weights}

In this section we show that, under the hypothesis of Theorem \ref{teorema_s-hormander}, strongly singular Calder\'on-Zygmund operators are bounded from $H^{p}_{w}(\R^n)$ to $L^{p}_{w}(\R^n)$, where $w$ belongs to a special class of Muckenhoupt weight. 

A non-negative measurable function $w(x)$ belongs to the class $A_1$ if {there exists $C>0$} such that for any ball $B \subset \R^n$ we have 
\begin{equation}
	\frac{1}{|B|} \int_{B}{w(y)dy} \leq C \, w(x), \quad \text{for a.e. } x \in B.
\end{equation}
In comparison with Lebesgue measure, if $w \in A_{1}$ then  there exists $c>0$ such that $ |E|w(B)\leq c |B|w(E)$ for any $E \subseteq B$ in which $B \subset \R^n$ and $w(B):=\int_{B}w(x)dx$ (see \cite[Chapter IV.2 - Theorem 2.1 (b)]{GarciaFranciaWeighted}). We say the weight $w(x)$ satisfies the reverse H\"older inequality for $1<r<\infty$, simply denoted by $w \in RH_{r}$, if there exists a constant $C>0$ such that 
\begin{equation*}
	\left( \frac{1}{|B|} \int_{B}w^{r}(x)dx \right)^{\frac{1}{r}} \leq \frac{C}{|B|} \int_{B} w(x)dx
\end{equation*}
for any ball $B\subset \R^n$. Clearly if $w \in RH_{r}$ then $w \in RH_{s}$ for all $1<s<r$. 


We denoted the weighted Lebesgue space $L^{p}_{w}(\R^n):=L^{p}(\R^n,w(x)dx)$ the set of all measurable functions such that
$$
\| f \|_{L^{p}_{w}} := \left( \int_{\R^n}{|f(x)|^p w(x)dx} \right)^{\frac{1}{p}} < \infty.
$$
In the same spirit, we define the weighted Hardy space, denoted by
$H^{p}_{w}(\R^n)$, to be the set of tempered distributions $f \in \S'(\R^n)$ such that  $\mathcal{M}_{\varphi}f \in L^{p}_{w}(\R^n)$ {and $\| f \|_{H^{p}_{w}} := \| M_{\varphi}f \|_{L^{p}_{w}}$ denotes its quasi-norm.} 
We refer \cite{strombergWeighted1989} for further details on the  weighted Hardy space.

\begin{definition}\cite[p. 112]{strombergWeighted1989}
	Let $0<p\leq1$ and $w \in A_1$. We say that a measurable function $a(x)$ is a $(w,p,\infty)-$ atom if there exists $B(z,r) \subset \R^n$ such that
	$$
	\supp(a)\subset B(z,r), \ \ \| a \|_{L^{\infty}} \leq w(B(z,r))^{-\frac{1}{p}} \ \ \text{and} \ \ \int{a(x)x^{\alpha}dx}=0 
	$$
	for any multi-index such that $|\alpha|\leq N_p$.
\end{definition}

{\noindent If $w \in A_1$ and $f \in H^{p}_{w}(\R^n)$, then there exist a sequence of coefficients $\{ \lambda_j\}_{j}$ and $(w,p,\infty)-$atoms $\{a_j\}_{j}$ such that $f=\sum \lambda_j a_j$, where the convergence is in $H^{p}_{w}-$norm. Moreover, $
	\inf \left\{ \left( \sum_{j \in \N}{|\lambda_j|^{p}} \right)^{{1}/{p}} \right\} \approx \| f \|_{H^{p}_{w}}
	$
	where the infimum is taken over all such atomic representations of $f$. In addiction, the converse of this result is also true. For a more general case of this result see \cite[Chapter VIII, Theorem 1]{strombergWeighted1989}}.

{Now we present the proof of Theorem \ref{th5.2}}


%



\begin{proof}[Proof of Theorem \ref{th5.2}]
	Let $a(x)$ be a $(p, \infty)-$atom in $H^{p}_{w}(\R^n)$ supported on $B(z,r)$. We will show that $Ta$ is uniformly bounded in $L^{p}_{w}-$norm. Suppose first $r>1$ and split
	\begin{align*}
		\| Ta \|_{L^{p}_{w}}^{p} = \int_{B(z,2r)}{|Ta(x)|^p w(x)dx}+ \sum_{j=1}^{\infty}{\int_{C_j(z,r)}{|Ta(x)|^p w(x)dx}}:=\textnormal{I}_1+\textnormal{I}_2.
	\end{align*}
	The first integral can be uniformly estimated from H\"older inequality with exponent $2/p$, the $L^2-$continuity of $T$ and from $w\in RH_{2/(2-p)}$. In fact,
	\begin{align*}
		\textnormal{I}_1 &\leq \left( \int{|Ta(x)|^2dx} \right)^{\frac{p}{2}} \left( \frac{1}{|B(z,2r)|} \int_{B(z,2r)}{w^{\frac{2}{2-p}}(x)dx} \right)^{1-\frac{p}{2}} \, |B(z,2r)|^{1-\frac{p}{2}} \\
		&\lesssim \frac{w(B(z,2r))}{w(B(z,r))} |B(z,r)|^{\frac{p}{2}} |B(z,2r)|^{-\frac{p}{2}} \\
		&\lesssim \frac{|B(z,2r)|}{|B(z,r)|} |B(z,r)|^{\frac{p}{2}} |B(z,2r)|^{-\frac{p}{2}} \lesssim 1.
	\end{align*}
	For the second integral note first that since $w\in RH_{s_1/p(s_1-1)}$ it follows
	\begin{align}
		\int_{C_j(z,r)} &|K(x,y)-K(x,z)|w^{\frac{1}{p}}(x)dx \leq \left( \int_{C_j(z,r)} |K(x,y)-K(x,z)|^{s_1}dx \right)^{\frac{1}{s_1}}  \times \nonumber \\
		&\quad \quad \quad \quad \times \left( \int_{C_j(z,r)}w^{\frac{s_1}{p(s_1-1)}}(x)dx \right)^{1-\frac{1}{s_1}} \nonumber \\
		&\lesssim 2^{-j\delta} \left(\frac{1}{|C_j(z,r)|} \int_{C_j(z,r)}w^{\frac{s_1}{p(s_1-1)}}(x)dx \right)^{1-\frac{1}{s_1}} \nonumber \\
		&\lesssim 2^{-j\delta} \left(\frac{|B_{j+1}(z,r)|}{|C_j(z,r)|} \right)^{1-\frac{1}{s_{1}}} |B_{j+1}(z,r)|^{-\frac{1}{p}} w(B_{j+1}(z,r))^{\frac{1}{p}}. \label{estimativaHp-Lp} 
	\end{align}

	Then,
	\begin{align*}
		\textnormal{I}_2 &\leq \sum_{j=1}^{\infty} \int_{C_j(z,r)} \left( \int_{B(z,r)}|K(x,y)-K(x,z)|\,|a(y)|dy \right)^{p}w(x)dx \\
		& \leq \sum_{j=1}^{\infty} w(B(z,r))^{-1} \left(\int_{C_j(z,r)} \int_{B(z,r)}|K(x,y)-K(x,z)|\,w^{\frac{1}{p}}(x)dydx \right)^{p} { |C_j(z,r)|^{1-p} }\\
		&= \sum_{j=1}^{\infty} w(B(z,r))^{-1} \left( \int_{B(z,r)} \left[ \int_{C_j(z,r)}|K(x,y)-K(x,z)|\,w^{\frac{1}{p}}(x)dx \right] dy \right)^{p} |C_j(z,r)|^{1-p} \\
		& \lesssim \sum_{j=1}^{\infty} \frac{w(B_{j+1}(z,r))}{w(B(z,r))}  |C_j(z,r)|^{1-p} |B(z,r)|^{p} \left(\frac{|B_{j+1}(z,r)|}{|C_j(z,r)|} \right)^{p \left(1-\frac{1}{s_{1}}\right)} \times \\
		& \quad \quad  \quad \quad \quad \quad \quad \quad \quad \quad \quad \quad \quad \quad \quad \quad \quad \quad \quad \quad \quad \quad   \times |B_{j+1}(z,r)|^{-1} \, 2^{-jp\delta} \\
		&\lesssim \sum_{j=1}^{\infty} |C_j(z,r)|^{1-p-p\left(1-\frac{1}{s_1}\right)} |B(z,r)|^{p-1} |B_{j+1}(z,r)|^{p \left(1-\frac{1}{s_{1}}\right)} 2^{-jp\delta} \\
		&\lesssim \sum_{j=1}^{\infty} 2^{j[n-p(n+\delta)]} \lesssim 1
	\end{align*}
	since $p >n/(n+\delta)$. Lets consider now the case $0< r \leq 1$. In the same way, we split
	\begin{align*}
		\| Ta \|_{L^{p}_{w}}^{p} = \int_{B(z,2r^{\rho})}{|Ta(x)|^p w(x)dx}+ \sum_{j=1}^{\infty}{\int_{C_j(z,r^{\rho})}{|Ta(x)|^p w(x)dx}}:=\textnormal{I}_3+\textnormal{I}_4
	\end{align*}
	for some $0< \rho \leq \sigma$ that will be chosen conveniently later. For the first integral, using H\"older inequality with exponent $s_2/p$, the $L^q-L^{s_2}$ continuity of $T$ and $w \in RH_{s_2/(s_2-1)}$ implies
	\begin{align*}
		\int_{B(z,2r^{\rho})}|Ta(x)|^p& w(x)dx \\
		&\leq \| Ta \|_{L^{s_2}}^{p} \left( \frac{1}{|B(z,2r^{\rho})|} \int_{B(z,2r^{\rho})}{w^{\frac{s_2}{s_{2}-p}}(x)dx} \right)^{1-\frac{p}{s_{2}}} |B(z,2r^{\rho})|^{1-\frac{p}{s_{2}}} \\
		&\lesssim \frac{w(B(z,2r^{\rho}))}{w(B(z,r))} \, |B(z,r)|^{\frac{p}{q}} |B(z,2r^{\rho})|^{-\frac{p}{s_{2}}} \\
		&\lesssim |B(z,r)|^{\frac{p}{q}-1} \,  |B(z,2r^{\rho})|^{1-\frac{p}{s_{2}}} \\
		&\lesssim r^{n\left[ \frac{p}{q}-1+\rho\left(1-\frac{p}{s_{2}}\right)  \right] } \lesssim 1 
	\end{align*}
	for
	$
	\rho \geq \rho_1:= \frac{1-p\left(\frac{1}{s_{2}}+\frac{\beta}{n} \right)}{1-\frac{p}{s_{2}}}.
	$
	For the second integral, proceeding just like in \eqref{estimativaHp-Lp}, it follows from $w\in RH_{s_1/p(s_1-1)}$ and $D_{s_1}$ condition that
	\begin{align*}
		\int_{C_j(z,r^{\rho})} &|K(x,y)-K(x,z)|w^{\frac{1}{p}}(x)dx \\
		&\lesssim |C_j(z,r^{\rho})|^{\frac{1}{s_1}-1+\frac{\delta}{n} \left( \frac{1}{\rho}-\frac{1}{\sigma} \right)}
		|B_{j+1}(z,r^{\rho})|^{1-\frac{1}{s_{1}}-\frac{1}{p}} w(B_{j+1}(z,r^{\rho}))^{\frac{1}{p}} \, 2^{-j\frac{\delta}{\rho}}. 
	\end{align*}
	Then,
	\begin{align*}
		\textnormal{I}_4 &\lesssim \sum_{j=1}^{\infty} \frac{w(B_{j+1}(z,r^{\rho}))}{w(B(z,r))} \, |B(z,r)|^{p} \, |C_j(z,r^{\rho})|^{\frac{p}{s_1}+1-2p+\frac{p\delta}{n}\left(\frac{1}{\rho}-\frac{1}{\sigma} \right)} \times \\
		& \quad \quad \quad \quad \quad \quad \quad \quad \quad \quad \quad \quad \quad \quad \quad  \times |B_{j+1}(z,r^{\rho})|^{p-\frac{p}{s_1}-1} 2^{-j\frac{p\delta}{\rho}} \\
		&\lesssim \sum_{j=0}^{\infty} \, |B(z,r)|^{p-1} \,|C_j(z,r^{\rho})|^{1-p+\frac{p\delta}{n}\left(\frac{1}{\rho}-\frac{1}{\sigma} \right)}	\, \left( \frac{|B_{j+1}(z,r^{\rho})|}{|C_j(z,r^{\rho})|} \right)^{p\left( 1-\frac{1}{s_1} \right)} \, 2^{-j\frac{p\delta}{\rho}} \\	
		&\lesssim \, r^{-\rho \left[ n(p-1)+\frac{p\delta}{\sigma} \right]+p\delta+n(p-1)} \, \sum_{j=1}^{\infty}{\, 2^{j \left[ n-p\left( n+\frac{\delta}{\sigma} \right) \right]}} \lesssim 1
	\end{align*}
	in which $\rho \leq \rho_2:= \frac{p(n+\delta)-n}{p \left( n+\frac{\delta}{\sigma} \right)-n} \leq \sigma$. The restriction $\rho_1 \leq \rho_2$ implies that uniform estimate holds for every $p \geq p_{_{0}}$. Hence, given $f \in H^{p}_{w}(\R^n)$, by standard arguments one has
	$$
	\|Tf \|_{L^{p}_{w}}^{p} \leq \sum_{j \in \N}|\lambda_j|^p \| Ta \|_{L^{p}_{w}}^{p} \lesssim \| f \|_{H^{p}_{w}}^{p},
	$$
	which concludes the proof.
\end{proof}

We remark that since $p_{_0}\leq p \leq 1$, we may replace the assumption  $w \in A_1 \cap RH_{d}$ for $d=\max\left\{\frac{s}{s-p}, \, \frac{s_1}{p(s_1-1)}\right\}$ at Theorem  \ref{th5.2} by the stronger condition $w \in A_1 \cap RH_{d_{_0}}$ for $d_{_0}=\max\left\{\frac{s}{s-1}, \, \frac{s_1}{p_{_0}(s_1-1)}\right\}$, where $s=\min \left\{2,s_{2} \right\}$.

A direct consequence of Theorem \ref{th5.2} is the following:

\begin{corollary}\label{coro}
	Let $T: \mathcal{S}(\R^{n}) \rightarrow \mathcal{S}'(\R^{n})$ be a linear and bounded operator as in Theorem \ref{teorema_s-hormander}. Then, $T$ can be extended to a bounded operator from $H^{p}(\R^n)$ to $L^{p}(\R^n)$  for $p_{_0} \leq p\leq 1$, with $p_{_0}$ given by \eqref{pcritico}.
\end{corollary}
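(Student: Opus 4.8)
The plan is to deduce Corollary \ref{coro} from Theorem \ref{th5.2} by taking the trivial weight $w\equiv 1$. First I would observe that the constant function $w\equiv 1$ belongs to $A_1$ (the averaging inequality holds with constant $C=1$) and that it satisfies the reverse H\"older inequality $RH_r$ for \emph{every} $1<r<\infty$ (again with constant $1$), so in particular $1\in A_1\cap RH_d$ for the specific exponent $d=\max\left\{\tfrac{s}{s-p},\,\tfrac{s_1}{p(s_1-1)}\right\}$ appearing in Theorem \ref{th5.2}, regardless of the values of $p$, $s$, $s_1$. Hence Theorem \ref{th5.2} applies with this weight and yields that $T$ extends to a bounded operator from $H^p_w(\R^n)$ to $L^p_w(\R^n)$ for $p_{_0}\le p\le 1$.

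The second step is the identification of the weighted spaces with the unweighted ones when $w\equiv 1$. For the Lebesgue side this is immediate: $L^p_w(\R^n)=L^p(\R^n,\,dx)=L^p(\R^n)$ with identical norms by definition. For the Hardy side, the weighted maximal characterization $\|f\|_{H^p_w}=\|\mathcal{M}_\varphi f\|_{L^p_w}$ reduces, for $w\equiv 1$, to $\|\mathcal{M}_\varphi f\|_{L^p}$, which is precisely the (grand/radial) maximal quasi-norm defining $H^p(\R^n)$ as recalled in Section \ref{preli}. Thus $H^1_w(\R^n)=H^p(\R^n)$ with equivalent quasi-norms, and the boundedness statement transfers verbatim. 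Combining, $T:H^p(\R^n)\to L^p(\R^n)$ is bounded for $p_{_0}\le p\le 1$, which is exactly the assertion of the corollary. Note that in the endpoint case $s_1=1$ one instead uses the variant of Theorem \ref{th5.2} with $d=\tfrac{1}{1-p}$ and the range $p_{_0}\le p<1$; the same substitution $w\equiv 1$ works since $1\in RH_{1/(1-p)}$.

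I do not expect any serious obstacle here: the corollary is a genuine specialization and the only things to check are the (elementary) membership $1\in A_1\cap RH_d$ and the (definitional) identifications $L^p_w=L^p$, $H^p_w=H^p$ at $w\equiv 1$. If one wanted a self-contained argument avoiding the weighted machinery altogether, the alternative — and the closest thing to a ``main step'' — would be to inspect the proof of Theorem \ref{th5.2}, set $w\equiv 1$ throughout, and notice that every invocation of $w\in A_1$ (the comparison $|E|w(B)\lesssim |B|w(E)$) and of the reverse H\"older inequality collapses to the trivial identity $|E|/|B|=|E|/|B|$, so the dyadic-annulus estimates $\mathrm{I}_1,\dots,\mathrm{I}_4$ go through with all weight factors replaced by volume ratios; but invoking Theorem \ref{th5.2} as a black box is cleaner and is the route I would take.
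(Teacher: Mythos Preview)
Your proposal is correct and is exactly the route the paper takes: the corollary is stated immediately after Theorem \ref{th5.2} as ``a direct consequence'' with no further proof, i.e., one simply specializes $w\equiv 1\in A_1\cap RH_d$ and identifies $H^p_w=H^p$, $L^p_w=L^p$. Your added remark on the $s_1=1$ endpoint is also in line with the paper's conventions.
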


\subsection{A special class of pseudodifferential operators} \label{subsection:pseudo}

It is well understood that pseudodifferential operators in the class $OpS^{m}_{\sigma,b}(\R^n)$ for certain parameters $m, \sigma$ and $b$ have distributional kernels satisfying the pointwise estimate \eqref{pontual_kernel_tipo_sigma} for $\delta=1$. This can be easily seen for operators $OpS^{-n(1-\sigma)}_{\sigma, b}(\R^n)$ from derivative estimate of the kernel presented in \cite[Theorem 1.1 (d)]{AlvarezHounie}. However, integral estimates are more suitable in dealing with this type of operators and using them we have the advantage of finding a wider set of examples. In \cite[Section 3]{AlvarezMilman}, the authors have shown that $OpS^{-m}_{\sigma,b}(\R^n)$ for $0<b\leq\sigma<1$ and $n(1-\sigma)/2\leq m < n/2$ satisfies a type of H\"ormander condition instead the pointwise condition \eqref{pontual_kernel_tipo_sigma}, i.e.,
\begin{equation}\nonumber
	\int_{|x|\geq 2r^{\sigma}}|K(x+z,x-y)-K(x+z,x)|dx+\int_{|x|\geq 2r^{\sigma}}|K(x-y,x+z)-K(x,x+z)|dx\leq C
\end{equation}
for all $z \in \R^n$, $|y|\leq r$ and $r\geq 0$. This represents the weakest condition known so far, but unfortunately it is still an open question to prove continuity on $H^p(\R^n)$ {from it} (see for instance the counterexample in \cite{YangYanDeng-Hormandercondition}).

In this section, we present classes of pseudodifferential operators satisfying the hypothesis of Theorems \ref{teorema_s-hormander} and \ref{theorem-derivative-condition}. We start verifying the derivative $D_{s_1}$ condition for $1 \leq s_1 \leq 2$, extending the case $s_{1}=1$ and $|\gamma|=0$ proved in \cite[Theorem 2.1]{AlvarezHounie}.


\begin{proposition} \label{exemplo_pseudo_2}
	Let $\delta>0$ and $T \in OpS^{m}_{\sigma, b}(\R^n)$ with $0<\sigma \leq 1$, $0\leq b <1$, $b \leq \sigma$ and $m \leq -n(1-\sigma)/2$. If $1 \leq s_1 \leq 2$, then $T$ satisfies the derivative $D_{s_1}$ condition with decay $\lfloor \delta \rfloor +1$. In particular, when $0<\delta < 1$ it satisfies integral conditions \eqref{s-hormander-1} and \eqref{s-hormander-2} with decay $1$.
\end{proposition}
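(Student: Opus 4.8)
The plan is to reduce the proposition to the pointwise kernel estimates available for operators in $OpS^{m}_{\sigma,b}(\R^n)$ and to feed them into the defining inequalities \eqref{hh1}--\eqref{hh2} of the derivative $D_{s_1}$ condition (for $0<\delta<1$, into \eqref{s-hormander-1}--\eqref{s-hormander-2}). First I would recall from \cite[Theorem 1.1 (d)]{AlvarezHounie} that the distributional kernel $K(x,y)$ of $T\in OpS^{m}_{\sigma,b}(\R^n)$ is smooth off the diagonal and, for every pair of multi-indices $(\beta,\gamma)$, obeys $|\partial_x^{\beta}\partial_y^{\gamma}K(x,y)|\lesssim|x-y|^{-\mu}$ when $0<|x-y|\le 1$, with $\mu$ no larger than $n+\tfrac{m+|\gamma|+b|\beta|}{\sigma}$, together with rapid decay $|\partial_x^{\beta}\partial_y^{\gamma}K(x,y)|\lesssim_{N}|x-y|^{-N}$ for all $N$ when $|x-y|\ge 1$. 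Since $b\le\sigma$ and $m\le -n(1-\sigma)/2\le 0$, the exponent relevant to us, $\mu(0,\gamma')$ with $|\gamma'|=\lfloor\delta\rfloor+1$, is at most $n+\tfrac{\lfloor\delta\rfloor+1}{\sigma}$, i.e. the kernel is no more singular than \eqref{derivative-condition} with decay $\lfloor\delta\rfloor+1$ requires. I would also note that the same bounds hold when $K$ is differentiated in its first slot: in the oscillatory representation of $K$ a derivative $\partial_x$ falling on $e^{i(x-y)\xi}$ reproduces exactly the factor $(i\xi)^{\gamma}$ that $\partial_y$ would produce, while a derivative on the symbol only improves the order because $b<1$; this disposes of the term $|\partial_y^{\gamma}K(y,x)-\partial_y^{\gamma}K(z,x)|$ of \eqref{hh1}--\eqref{hh2} exactly as the first term, so from here I would treat only the first.

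Next I would reduce the difference to a pointwise quantity. Taking $|\gamma|=\lfloor\delta\rfloor$ and $x$ with $|x-z|\ge 2|y-z|^{\sigma}$ (the region in which \eqref{hh1}--\eqref{hh2} are tested), the segment $[y,z]$ lies where $|x-\cdot|\approx|x-z|$, so the mean value theorem applied to $\partial_y^{\gamma}K(x,\cdot)$ gives
\[
|\partial_y^{\gamma}K(x,y)-\partial_y^{\gamma}K(x,z)|\;\lesssim\;|y-z|\sup_{\xi\in[y,z]}|\partial_y^{\gamma'}K(x,\xi)|\;\lesssim\;|y-z|\,|x-z|^{-\mu(0,\gamma')},\qquad|\gamma'|=\lfloor\delta\rfloor+1,
\]
with the rapid--decay bound replacing the last factor when $|x-z|\ge 1$. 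Since $|x-z|\approx 2^{j}\tilde r$ on $C_j(z,\tilde r)$, the right--hand side is essentially constant there, whence
\[
\Big(\int_{C_j(z,\tilde r)}|\partial_y^{\gamma}K(x,y)-\partial_y^{\gamma}K(x,z)|^{s_1}\,dx\Big)^{1/s_1}\;\lesssim\;|C_j(z,\tilde r)|^{1/s_1}\,|y-z|\,(2^{j}\tilde r)^{-\mu(0,\gamma')}.
\]
Thus $s_1\in[1,2]$ intervenes only through the volume factor $|C_j|^{1/s_1}$, which pairs with the $|C_j|^{1/s_1-1}$ on the right of \eqref{hh1}--\eqref{hh2}; this is why the whole range $1\le s_1\le 2$ is treated at once, and why for $\lfloor\delta\rfloor=0$ one simultaneously obtains \eqref{s-hormander-1}--\eqref{s-hormander-2} with decay $1$.

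It then remains to check the arithmetic of exponents. For $r>1$ (so $\tilde r=r$, $2^{j}\tilde r\ge 1$) and, for $r<1$, in the range $2^{j}r^{\rho}\ge 1$, one is in the rapidly decaying regime, an arbitrarily large power of $2^{j}$ is free, and \eqref{hh1}--\eqref{hh2} follow at once after absorbing the $r$--powers using $r>1$ (resp. $|y-z|<r$). The substantive case is $r<1$ with $2^{j}r^{\rho}\le 1$: inserting $|C_j(z,r^{\rho})|\approx(2^{j}r^{\rho})^{n}$ and $|y-z|<r$ turns the bound into $r\,(2^{j}r^{\rho})^{\,n/s_1-\mu(0,\gamma')}$, to be dominated by
\[
r^{-\lfloor\delta\rfloor}\,|C_j(z,r^{\rho})|^{\frac{1}{s_1}-1+\frac{\lfloor\delta\rfloor+1}{n}\left(\frac{1}{\rho}-\frac{1}{\sigma}\right)}\,2^{-j(\lfloor\delta\rfloor+1)/\rho}.
\]
Writing both sides as products of powers of $t:=2^{j}r^{\rho}\in(0,1]$ and of $r$ and matching the two exponents separately, the required estimates reduce to explicit numerical inequalities among $n,m,\sigma,b,\lfloor\delta\rfloor,\rho$ that hold precisely because $b\le\sigma$, $m\le -n(1-\sigma)/2$ and $0<\rho\le\sigma$; the finitely many boundary configurations ($j=0$, or $r$ near $1$) are handled by a crude direct estimate. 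This yields the derivative $D_{s_1}$ condition with decay $\lfloor\delta\rfloor+1$, and the case $0<\delta<1$ gives \eqref{s-hormander-1}--\eqref{s-hormander-2} with decay $1$. I expect the main obstacle to be exactly this last step — extracting the sharp exponent from \cite[Theorem 1.1 (d)]{AlvarezHounie}, confirming that $m\le -n(1-\sigma)/2$ (with $b\le\sigma$) is precisely the threshold making it compatible with \eqref{derivative-condition}, and verifying that the passage between the singular and the rapidly decaying ranges of $j$ is seamless inside the $L^{s_1}$ norm — which is where the argument genuinely goes beyond the case $s_1=1$, $|\gamma|=0$ of \cite[Theorem 2.1]{AlvarezHounie}.
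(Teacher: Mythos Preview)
Your approach has a genuine gap at the very first step: the pointwise exponent you attribute to \cite[Theorem 1.1 (d)]{AlvarezHounie} is incorrect. For $p\in S^{m}_{\sigma,b}$ the short--range bound on the kernel is
\[
|\partial_y^{\gamma'}K(x,y)|\lesssim |x-y|^{-(n+m+|\gamma'|)/\sigma},\qquad |x-y|\le 1,
\]
not $|x-y|^{-(n+(m+|\gamma'|)/\sigma)}$. (Check the case $m=0$, $|\gamma'|=0$: symbols in $S^{0}_{\sigma,0}$ give kernels as singular as $|x-y|^{-n/\sigma}$, not $|x-y|^{-n}$.) The two exponents differ by $n(1/\sigma-1)>0$ when $\sigma<1$, and this gap is exactly what decides the threshold on $m$. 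If you plug the correct exponent into your mean--value--plus--integration scheme and compare with the right--hand side of \eqref{hh2} on $C_j(z,r^{\rho})$ for $r<1$ and $2^{j}r^{\rho}\le 1$, the required inequality reduces to $(m+n)/\sigma\ge n$, i.e.\ $m\le -n(1-\sigma)$. So your argument recovers the $D_{s_1}$ condition only in the stronger regime $m\le -n(1-\sigma)$ --- precisely the range in which the paper already notes (opening paragraph of \S\ref{subsection:pseudo}) that the \emph{pointwise} condition \eqref{pontual_kernel_tipo_sigma} itself holds. It fails throughout the gap $-n(1-\sigma)<m\le -n(1-\sigma)/2$, in particular at the borderline $m=-n(1-\sigma)/2$ of the proposition.

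The paper's proof is designed exactly to close this gap. Instead of pointwise bounds it proves the $D_{2}$ condition directly (whence $D_{s_1}$ for $1\le s_1\le 2$ follows): write $\widetilde K(x,y)=\int_1^{\infty}\widetilde K(x,y,t)\,dt/t$ with a Littlewood--Paley cutoff $\psi(|\xi|/t)$, bound $\|\widetilde K(\cdot,y,t)-\widetilde K(\cdot,z,t)\|_{L^2(C_j)}$ by inserting the weight $(1+t^{2\sigma}|x-z|^2)^{N}$ and invoking the $L^{2}(\R^n)$--boundedness of the pseudodifferential operators with symbols $|\xi|^{n(1-\sigma)/2+\sigma|\beta|}\partial_\xi^{\beta}[(e^{i(z-y)\xi}-1)p(x,\xi)]\chi(|\xi|/t)\in S^{m+n(1-\sigma)/2}_{\sigma,b}\subset S^{0}_{\sigma,b}$, then integrate in $t$ (splitting at $t=r^{-1}$). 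The passage from $L^\infty$ to $L^2$ in frequency is what buys the extra $n(1-\sigma)/2$ in the threshold, and this is not something that a pointwise kernel estimate can reproduce.
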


Follows from  \cite[Theorem 3.5]{AlvarezHounie} that $T$ maps continuously $L^q(\R^n)$ into $L^{s_2}(\R^n)$ where $\dfrac{1}{q}=\dfrac{1}{s_2}+\dfrac{\beta}{n}$ and  $n(1-\sigma) \left( 1- \dfrac{1}{s_2} \right) \leq \beta < n\left( 1- \dfrac{1}{s_2} \right)$ since: 
\begin{enumerate}
	\item[($a_1$)] $\displaystyle	m \leq -\beta-n(1-\sigma)\left(\frac{1}{s_2}-\frac{1}{2}\right)$, if $1<q\leq s_2 \leq 2$;
	\item[($a_2$)] $m \leq -\beta$, if $1<q\leq 2 \leq s_2$;
	\item[($a_3$)] $m \leq -n(1-\sigma)/2$, if $2\leq q \leq s_2$.
\end{enumerate}
Note that $m \leq -n(1-\sigma)/2$ in all the cases and since $0 \leq b \leq \sigma<1$ we have that $T \in OpS^{m}_{\sigma,b}(\R^n)$ is bounded from $L^{2}(\R^n)$ to itself.  Now we present the proof of Proposition \ref{exemplo_pseudo_2}.

\begin{proof}
	Let $T \in OpS^{m}_{\sigma, b}(\R^n)$, $K$ its distributional kernel and we denote by $\widetilde{K}(x,y)= \partial_{y}^{\gamma}K(x,y)$ for $|\gamma|=\lfloor \delta \rfloor$. In order to obtain the derivative $D_{s_1}$ condition for $1 \leq s_1< 2$ is suffices to prove it for $s_1=2$. We claim that under the restriction $m \leq -n[(1-\sigma)/2+\lambda]$ in which $\lambda= \max \{0, (b-\sigma)/2  \}$ it follows for $r \geq 1$
	\begin{align*}
		\sup_{|y-z| \leq r}{\left( \int_{C_j(z,r^{})}{|\widetilde{K}(x,y)-\widetilde{K}(x,z)|^2dx} \right)^{\frac{1}{2}} \lesssim \, r^{-\lfloor \delta \rfloor} \, |C_j(z,r)|^{-\frac{1}{2}} \,\, 2^{-j(\lfloor \delta \rfloor + 1)}},
	\end{align*}
	and for $r<1$
	\begin{align*}
		&\sup_{|y-z| \leq r}\left( \int_{C_j(z,r^{\rho})}{|\widetilde{K}(x,y)-\widetilde{K}(x,z)|^2dx} \right)^{\frac{1}{2}} \\
		& \quad \quad \quad \quad \quad \quad \quad \quad \quad \quad   \lesssim \, r^{-\lfloor \delta \rfloor} \, |C_j(z,r^{\rho})|^{-\frac{1}{2}+ \frac{\lfloor \delta \rfloor + 1}{n} \left( \frac{1}{\rho}-\frac{1}{\sigma} \right)} \,\, 2^{-\frac{j}{\rho}(\lfloor \delta \rfloor+1)}
	\end{align*}
	
	The same estimate for the adjoint $\widetilde{K}(y,x)$ will be treated in the end assuming $m \leq -n(1-\sigma)/2 $.

	The proof consists an adaptation of \cite[Theorem 2.1]{AlvarezHounie}. Assume without loss of generality that the symbol $p(x,\xi)$ associated to the operator $T$ vanishes for $|\xi|\leq 1$ and consider $\psi \in C^{\infty}_{c}(\R)$ a non-negative function such that $\supp(\psi) \subset [1/2,1]$ and
	\begin{equation} \label{psi-1}
		\int_{0}^{\infty}{\psi \left( \frac{1}{t} \right) \,\, \frac{1}{t}\ dt } = \int_{1}^{2}{\psi \left( \frac{1}{t} \right) \,\, \frac{1}{t}\ dt }=1.
	\end{equation}
	Define $\displaystyle{K(x,y,t) = (2\pi)^{-n} \int{e^{i(x-y) \,\, \xi}p(x,\xi)\psi \left( \frac{|\xi|}{t} \right)d\xi}}$ and consequently 
	$$
	\widetilde{K}(x,y,t) = (-i)^{\lfloor \delta \rfloor}(2\pi)^{-n} \int{e^{i(x-y) \,\, \xi}  p(x,\xi)\psi \left( \frac{|\xi|}{t}  \right)\xi^{\gamma}d\xi}.
	$$
By the standard representation of the kernel of a pseudodifferential operator 
	and from \eqref{psi-1} we may write   
	\begin{equation} \label{q-hormander_demo3}
		\widetilde{K}(x,y)= \int_{0}^{\infty}{\widetilde{K}(x,y,t) \ \frac{dt}{t}} \ = \int_{1}^{\infty}{\widetilde{K}(x,y,t) \ \frac{dt}{t}}.
	\end{equation}
	Consider first $0<r<1$. From Minkowski inequality for integrals
	\begin{align}
		\int_{C_j(z,r^{\rho})}|\widetilde{K}(x,y)-&\widetilde{K}(x,y)|^2dx \leq \int_{C_j(z,r^{\rho})}{\left( \int_{1}^{\infty}{|\widetilde{K}(x,y,t)-\widetilde{K}(x,z,t)| \frac{dt}{t}} \right)^2dx} \nonumber \\
		& =  \left\{ \left[ \int_{C_j(z,r^{\rho})}{\left( \int_{1}^{\infty}{|\widetilde{K}(x,y,t)-\widetilde{K}(x,z,t)| \frac{dt}{t}} \right)^2dx}  \right]^{\frac{1}{2}} \right\}^{2} \nonumber \\
		& \leq  \left\{ \int_{1}^{\infty}{ \left( \int_{C_j(z,r^{\rho})}{|\widetilde{K}(x,y,t)-\widetilde{K}(x,z,t)|^2dx}  \right)^{\frac{1}{2}}\frac{dt}{t} }\right\}^{2}. \nonumber
	\end{align}
	Let $\Gamma (t)= \| \widetilde{K}(\cdot,y,t)-\widetilde{K}(\cdot,z,t) \|_{L^2[C_j(z,r^{\rho})]}$ and then
	\begin{align} \label{q-hormander_dem_1}
		\left( \int_{C_j(z,r^{\rho})}{|\widetilde{K}(x,y)-\widetilde{K}(x,z)|^2dx} \right)^{\frac{1}{2}} \leq \int_{1}^{r^{-1}}{ \Gamma (t) \frac{dt}{t}} + \int_{r^{-1}}^{\infty}{ \Gamma (t) \frac{dt}{t}}=I_1+I_2.
	\end{align}
	Lets deal first with $I_1$, in which the estimate relies strongly on the assumption $tr<1$. Throughout this proof we consider $N \in \mathbb{Z}_{+}$ a constant that will be chosen conveniently after. Clearly
	\begin{align*}
		\Gamma (t)
		&  \leq \left( \int{|\widetilde{K}(x,y,t)-\widetilde{K}(x,z,t)|^2(1+t^{2\sigma}|x-z|^2)^Ndx}  \right)^{\frac{1}{2}} \\ & \quad \quad \quad  \quad \quad \quad  \times \sup_{x \in C_j(z,r^{\rho})}{(1+t^{2\sigma}|x-z|^2)^{-\frac{N}{2}}}.
	\end{align*}
	We claim that for $m \leq -n[(1-\sigma)/2+\lambda]$
	\begin{align} \label{estimate-hounie-1}
		\left( \int_{\R^n}{|\widetilde{K}(x,y,t)-\widetilde{K}(x,z,t)|^2(1+t^{2\sigma}|x-z|^2)^Ndx}  \right)^{{1}/{2}}  \lesssim  (tr) t^{\frac{\sigma n}{2}+\lfloor \delta \rfloor} \ \ \text{for} \ \  tr \leq 1
	\end{align}
	and  $\displaystyle{	\sup_{x \, \in \, C_j(z,r^{\rho})}{(1+t^{2\sigma}|x-z|^2)^{-{N}/{2}}} \leq \left[ 1+t^{2\sigma}(2^jr^{\rho})^2 \right]^{-{N}/{2}}}$. Using these estimates and the change of variables $\omega=t^{\sigma}2^{j}r^{\rho}$ we obtain
	\begin{align}
		\int_{1}^{r^{-1}}{ \Gamma (t) \frac{dt}{t}} & \lesssim \ \int_{1}^{r^{-1}}{r \, t^{\frac{\sigma n}{2}+\lfloor \delta \rfloor}\left[ 1+t^{2\sigma}(2^jr^{\rho})^2 \right]^{-\frac{N}{2}}dt} \nonumber \\
		&\lesssim r^{1-\frac{\rho n}{2}-\frac{\rho}{\sigma}(1+\lfloor \delta \rfloor)} \, (2^j)^{-\frac{n}{2}-\frac{1+\lfloor \delta \rfloor}{\sigma}} \int_{2^{j}r^{\rho}}^{2^{j}r^{\rho-\sigma}}{\frac{\omega^{\frac{n}{2}-1+\frac{1+\lfloor \delta \rfloor}{\sigma}}}{(1+\omega^2)^{\frac{N}{2}}}d\omega} \nonumber \\
		& \lesssim   \ r^{-\lfloor \delta \rfloor} \, |C_j(z,r^{\rho})|^{-\frac{1}{2}+ \frac{\lfloor \delta \rfloor + 1}{n} \left( \frac{1}{\rho}-\frac{1}{\sigma} \right)} \,\, 2^{-\frac{j}{\rho}(\lfloor \delta \rfloor+1)}, \label{estimation-pseudo}
	\end{align}
	since $\displaystyle{\int_{0}^{\infty}{\frac{\omega^{\frac{n}{2}-1+\frac{1+\lfloor \delta \rfloor}{\sigma}}}{(1+\omega^2)^{\frac{N}{2}}}d\omega} < \infty}$ for $\displaystyle N>\frac{n}{2}+\frac{1+\lfloor \delta \rfloor}{\sigma}$. Lets us give an idea of the proof of \eqref{estimate-hounie-1}. Using integration by parts, for $\alpha \in \Z^{n}_{+}$ such that $|\alpha|\leq N$ we may write
	\begin{align}
		t^{\sigma|\alpha|} &(x-z)^{\alpha}[\widetilde{K}(x,y,t)-\widetilde{K}(y,z,t)] \nonumber \\
		&= \sum_{|\beta|\leq |\alpha|}C_{\alpha, \beta} \, t^{\sigma|\alpha| +\lfloor \delta \rfloor} \int{e^{i(x-z)\cdot \xi} |\xi|^{n(1-\sigma)/2+\sigma|\beta|} \, \partial_{\xi}^{\beta}\left[(e^{i(z-y)\cdot \xi}-1)p(x,\xi)\right]} \nonumber \\
		&\quad \quad \quad \quad \quad \quad \quad  \times	|\xi|^{-n(1-\sigma)/2-\sigma|\beta|} \, \partial_{\xi}^{\alpha-\beta}\left[\psi\left(\frac{|\xi|}{t}\right) \left( \frac{\xi}{t} \right)^{\gamma} \,\, \right] d\xi . \label{estimate-pseudo-kernel-1}
	\end{align} 
	Since $| e^{i(z-y)\cdot \xi}-1 | \leq tr$ and $| \partial_{\xi}^{\beta}e^{i(z-y)\cdot \xi} | \lesssim |\xi|^{-|\beta|} \, (tr)^{|\beta|}$ one can show that if $\chi \in C^{\infty}_{c}(\R^{+})$ is a function such that $\chi =\psi$ on the support of $\psi$, then
	\begin{align*}
		\left\{ |\xi|^{n(1-\sigma)/2+\sigma|\beta|} \partial_{\xi}^{\beta}\left[(e^{i(z-y)\cdot \xi}-1)p(x+z,\xi)\right] \chi \left( {|\xi|}/{t} \right): \ |y-z|<r, \, z \in \R^n   \right\}
	\end{align*}
	is a bounded subset of $S^{m+n(1-\sigma)/2}_{\sigma,b}(\R^n)$ with bounds being less than or equal to $Ctr$. Therefore, since $m\leq -n\lambda$, the family of symbols above defines  pseudodifferential operators that are bounded in $L^2(\R^n)$ with norm proportional to $Ctr$ (see \cite[Theorem 1]{Hounie1986}). Therefore from this consideration and \eqref{estimate-pseudo-kernel-1} we have
	\begin{align*}
		&\left( \int_{\R^n}|\widetilde{K}(x,y,t)-\widetilde{K}(x,z,t)|^2(1+t^{2\sigma}|x-z|^2)^Ndx  \right)^{{1}/{2}}  \\
		& \quad\quad\quad \lesssim tr \sum_{|\alpha|\leq N}\sum_{|\beta| \leq |\alpha|} C_{\alpha} \, \left\|  t^{\sigma|\alpha| + \lfloor \delta \rfloor} \, |\xi|^{-n(1-\sigma)/2-\sigma|\beta|} \partial_{\xi}^{\alpha-\beta}\left[\psi\left(\frac{|\xi|}{t}\right) \left( \frac{\xi}{t} \right)^{\gamma} \,\, \right]  \right\|_{L^2} \\
		&\quad\quad\quad \lesssim (tr)t^{\frac{\sigma n}{2}+\lfloor \delta \rfloor}.
	\end{align*}
	On the other hand, to control $I_2$ we split
	$$\Gamma (t) \leq \| \widetilde{K}(\cdot,y,t) \|_{L^2[C_j(z,r^{\rho})]}+\| \widetilde{K}(\cdot,z,t) \|_{L^2[C_j(z,r^{\rho})]}.
	$$
	If $x \in C_{j}(z,r^{\rho})$ and $|y-z|<r<1$, then $|x-y| \geq |x-z|-|y-z| \geq 2^{j-1}r^{\rho}$	and
	\begin{align*}
		\| \widetilde{K}(\cdot,y,t) \|_{L^2[C_j(z,r^{\rho})]} & \leq \left( \int_{\R^{n}}|\widetilde{K}(x,y,t)|^2(t^{2\sigma}|x-y|^2)^{N}dx \right)^{\frac{1}{2}} \times \\
		& \quad \quad \quad \quad \quad \quad \times \sup_{|x-y|>2^{j-1}r^{\rho}}{(t^{2\sigma}|x-y|^2)^{-N/2}}.
	\end{align*}
	We claim that
	\begin{align} \label{estimate-integral-adj}
		\left(\int_{\R^{n}}{|\widetilde{K}(x,y,t)|^2(t^{2\sigma}|x-y|^2)^{N}dx} \right)^{\frac{1}{2}} \lesssim t^{\frac{\sigma n}{2} + \lfloor \delta \rfloor}
	\end{align}
	and the second term is clearly estimated by $(t^{\sigma}2^{j-1}r^{\rho})^{-N}$. Thus $\| \widetilde{K}(\cdot,y,t) \|_{L^2[C_j(z,r^{\rho})]} \lesssim t^{{\sigma n}/{2} +\lfloor \delta \rfloor}(t^{\sigma}2^{j-1}r^{\rho})^{-N}$. Analogously $ \| \widetilde{K}(\cdot,z,t) \|_{L^2[C_j(z,r^{\rho})]} \lesssim t^{{\sigma n}/{2}+\lfloor \delta \rfloor}(t^{\sigma}2^{j-1}r^{\rho})^{-N}$. Using these estimates and assuming $\displaystyle N>\frac{n}{2}+ \frac{\lfloor \delta \rfloor + 1}{\sigma}$  we obtain
	\begin{align}
		\int_{r^{-1}}^{\infty}{ \Gamma (t) \frac{dt}{t}}  &\lesssim \int_{r^{-1}}^{\infty}{t^{\frac{\sigma n}{2}+\lfloor \delta \rfloor-\sigma N -1}(2^jr^\rho)^{-N}dt}
		\lesssim (2^jr^\rho)^{-\frac{n}{2}-\frac{1+\lfloor \delta \rfloor}{\sigma}} \, r \nonumber \\ 
		& = r^{1-\frac{\rho}{\sigma}-\frac{\rho n}{2}-\frac{\rho \lfloor \delta \rfloor}{\sigma}} (2^j)^{-\frac{n}{2}-\frac{1+\lfloor \delta \rfloor}{\sigma}} \nonumber \\
		& \lesssim r^{-\lfloor \delta \rfloor} \, |C_j(z,r^{\rho})|^{-\frac{1}{2}+ \frac{\lfloor \delta \rfloor + 1}{n} \left( \frac{1}{\rho}-\frac{1}{\sigma} \right)} \,\, 2^{-\frac{j}{\rho}(\lfloor \delta \rfloor+1)}. \label{ineq-1}
	\end{align}
	It just remains to show now \eqref{estimate-integral-adj}. In the same spirit as previously, taking $|\alpha|=N$ we may write
	\begin{align*}
		t^{\sigma |\alpha|}(x-y)\widetilde{K}(x,y,t) \simeq \sum_{|\beta| \leq |\alpha|} t^{\sigma|\alpha| {+ \lfloor \delta \rfloor}} \int{e^{i(x-y)\cdot \xi}\partial_{\xi}^{\beta}p(x,\xi) {\partial_{\xi}^{\alpha-\beta} \left[\psi\left(\frac{|\xi|}{t}\right) \left( \frac{\xi}{t} \right)^{\gamma} \,\, \right]}d\xi}.
	\end{align*}
	Since the class of symbols $\left\{ |\xi|^{n(1-\sigma)/2+\sigma|\beta|}\partial_{\xi}^{\beta}p(x+y,\xi): \ y \in \R^n  \right\}$ are a bounded subset of $S^{m+n(1-\sigma)/2}_{\sigma, b}(\R^n)$, follows directly that the family of pseudodifferential associated  is uniformly bounded on $L^2(\R^n)$. Therefore
	\begin{align*}
		&\left(\int_{\R^n}{|\widetilde{K}(x,y,t)|^2(t^{2\sigma}|x-y|^2)^{N}dx} \right)^{\frac{1}{2}} \\
		& \quad \quad \quad \quad \quad \quad \quad \lesssim \sum_{|\beta|\leq |\alpha|} t^{\sigma|\alpha| {+ \lfloor \delta \rfloor} } \, \left\| |\xi|^{-\frac{n(1-\sigma)}{2}-\sigma|\beta|} {\partial_{\xi}^{\alpha-\beta} \left[\psi\left(\frac{|\xi|}{t}\right) \left( \frac{\xi}{t} \right)^{\gamma} \,\, \right]} \right\|_{L^2} \\
		& \quad \quad \quad \quad \quad \quad \quad \lesssim t^{\frac{\sigma n}{2} + {+ \lfloor \delta \rfloor}}.
	\end{align*}
	Now we are moving on to the case $r>1$. Since we can estimate $\| \widetilde{K}(\cdot,y,t) \|_{L^2[C_j(z,r)]}$ and $\| \widetilde{K}(\cdot,z,t) \|_{L^2[C_j(z,r)]}$ in the same way as before, we obtain for {$$N>\max \left\{ \frac{n}{2}+\frac{\lfloor \delta \rfloor}{\sigma}, \, \frac{n}{2}+ \lfloor \delta \rfloor +1 \right\},$$}
	\begin{align}
		&\left( \int_{C_j(z,r)}{|\widetilde{K}(x,y)-\widetilde{K}(x,z)|^2dx} \right)^{\frac{1}{2}} \nonumber \\
		& \quad \quad \quad \quad \quad \quad \quad  \leq  \int_{1}^{\infty}{ \left( \| \widetilde{K}(\cdot,y,t) \|_{L^2[C_j(z,r)]} + \| \widetilde{K}(\cdot,z,t) \|_{L^2[C_j(z,r)]} \right) \frac{dt}{t}} \nonumber \\
		& \quad \quad \quad \quad \quad \quad \quad \lesssim (2^jr)^{-N} \, \int_{r^{-1}}^{\infty}{t^{\frac{\sigma n}{2} {+\lfloor \delta \rfloor}-\sigma N -1}dt} \nonumber \\
		& \quad \quad \quad \quad \quad \quad \quad \lesssim r^{-\frac{n}{2}-\lfloor \delta \rfloor -(1-\sigma)- \lfloor \delta \rfloor(1-\sigma)} \, (2^j)^{-\frac{n}{2}-\lfloor \delta \rfloor -1} \nonumber \\
		&  \quad \quad \quad \quad \quad \quad \quad \lesssim \ r^{-\lfloor \delta \rfloor} \, |C_j(z,r)|^{-\frac{1}{2}} \,\, 2^{-j(1+\lfloor \delta \rfloor)}. \label{estimate-rgreater1}
	\end{align}
	
	Now we deal with estimates of the adjoint. Suppose first $0<r<1$. Since
	\begin{align*}
		{(-i)^{-|\gamma|}}(2\pi)^{n}[\widetilde{K}(y,x,t)-\widetilde{K}(z,x,t)] &=\int{e^{-i(x-y)\cdot \xi}[p(y,\xi)-p(z,\xi)] \, \psi \left(\frac{|\xi|}{t}\right) \, {\xi^{\gamma}}  \, d\xi} \\
		& \quad \quad + \int{e^{ix \cdot \xi}(e^{iy\xi}-e^{iz \cdot \xi})p(z,\xi) \, \psi\left(\frac{|\xi|}{t}\right) {\xi^{\gamma}}  d\xi}  \\
		&=: f(x-y,y,z,t)+g(x,y,z,t),
	\end{align*}
	then
	\begin{align*}
		&\left( \int_{C_j(z,r^{\rho})}{|\widetilde{K}(y,x,t)-\widetilde{K}(z,x,t)|^{2}dx} \right)^{\frac{1}{2}} \\
		& \quad \quad \quad \quad \quad \quad \quad \lesssim \| g(\cdot,y,z,t) \|_{L^2[C_j(z,r^{\rho})]} + \| f(\cdot-y,y,z,t) \|_{L^2[C_j(z,r^{\rho})]}
	\end{align*}
	and we will obtain analogous estimates for the $L^2-$norm as presented before. Suppose first $tr<1$ and note that
	\begin{align} \label{estimation1-g}
		|g(x,y,z,t)|^2(1+t^{2\sigma}|x|^{2})^{N} = \sum_{|\alpha| \leq N} \left[ |g(x,y,z,t)| \, (t^{\sigma}|x|)^{|\alpha|} \right]^{2}.
	\end{align}
	Considering $G(\xi,y,z,t)=(e^{iy\cdot \xi}-e^{iz\cdot \xi})p(z,\xi) \,\psi \left( {|\xi|}/{t} \right) \, {\xi^{\gamma}}$ and taking the Fourier transform in the first variable we have the identity $\widehat{G}(x,y,z,t)=(2\pi)^{-n} g(x,y,z,t)$. In addition, from mean value inequality it follows for $|y-z|\leq r$ and $tr<1$ that
	\begin{align} \label{estimation2-g}
		\left| \partial_{\xi}^{\beta}[(e^{iy \cdot \xi}-e^{iz\cdot \xi})p(z,\xi)] \right| \lesssim (tr)t^{m-\sigma|\beta|}.
	\end{align}
	Then, from \eqref{estimation1-g} and \eqref{estimation2-g} 
	\begin{align*}
		&\left( \int_{\R^n}{|g(x,y,z,t)|^2(1+t^{2\sigma}|x|^{2})^{N}dx} \right)^{\frac{1}{2}} \lesssim \sum_{|\alpha| \leq N} t^{\sigma |\alpha|} \| \widehat{G}(\cdot,y,z,t) |x|^{|\alpha|} \|_{L^2} \\
		&\quad \quad \lesssim \sum_{|\alpha| \leq N} t^{\sigma |\alpha|} \| \widehat{\partial_{\xi}^{\alpha} G}(\cdot,y,z,t) \|_{L^2} \\
		&\quad \quad \leq \sum_{|\alpha| \leq N}\sum_{|\beta| \leq |\alpha|} t^{\sigma |\alpha| {+ \lfloor \delta \rfloor}} \left\| \partial_{\xi}^{\beta}[(e^{iy \cdot \xi}-e^{iz\cdot \xi})p(z,\xi)] \, \partial_{\xi}^{\alpha-\beta}\left[ \psi\left( \frac{|\xi|}{t} \right) \, {\left(\frac{\xi}{t}\right)^{\gamma}} \,\, \right] \right\|_{L^2} \\
		& \quad \quad \leq \sum_{|\alpha| \leq N}\sum_{|\beta| \leq |\alpha|} t^{\sigma |\alpha| {+\lfloor \delta \rfloor}} \,  (tr) \, t^{m-\sigma |\beta|} \, t^{|\beta|-|\alpha|} \, t^{\frac{n}{2}} \\
		& \quad \quad \lesssim (tr) t^{\frac{\sigma n}{2}{+\lfloor \delta \rfloor}}
	\end{align*}
	since $m \leq -n(1-\sigma)/2$. The estimate for $f$ follows by the same steeps as presented for g. We proceed as before replacing $G$ by $G'(\xi,y,z,t)=[p(y,\xi)-p(z,\xi)] \, \psi \left( |\xi|/t \right) \, {\xi^{\gamma}}$ and using the estimate
	\begin{align*}
		\left| \partial^{\alpha}_{\xi}[p(y,\xi)-p(z,\xi)] \right| \leq (tr) \, t^{m-\sigma |\alpha|},
	\end{align*}
Thus, the conclusion follows in the same way did in \eqref{estimation-pseudo}. If we drop the assumption $tr<1$ we will proceed as following. Write
	\begin{align*}
		g(x,y,z,t) &= \int{e^{-i(x-y)\cdot \xi}p(z,\xi) \, \psi \left( \frac{|\xi|}{t} \right) \, {\xi^{\gamma}} d\xi}-\int{e^{-i(x-z)\cdot \xi}p(z,\xi) \, \psi \left( \frac{|\xi|}{t} \right) \, {\xi^{\gamma}} d\xi} \\
		&=: g_1(x,y,z,t)-g_2(x,y,z,t).
	\end{align*}
	Thus, we will obtain the $L^2-$norm estimate for $g_1$ and $g_2$. In the same way as before
	\begin{align*}
		&\| g_2(\cdot,y,z,t) \|_{L^2[C_j(z,r^{\rho})]}  \\
		& \quad \quad \quad \quad \quad \quad \quad \leq \left( \int{|g_2(x,y,z,t)|^{2} \, [(x-z)^{2}t^{2\sigma}]^{N}dx} \right)^{\frac{1}{2}} 
		\sup_{x \in C_j(z,r^{\rho})}{[(x-z)t^{\sigma}]^{-N}}
	\end{align*}
	and consider $\alpha \in \Z^{n}_{+}$ such that $|\alpha|=N$. Integration by parts gives us
	$$
	g_2(x,y,z,t) \, (x-z)^{\alpha} t^{\sigma |\alpha|} = C \, t^{\sigma |\alpha|} \widehat{\partial_{\xi}^{\alpha}G}(x-z,y,z,t),
	$$
	where $G(\xi,y,z,t)= p(z,\xi) \, \psi \left({|\xi|}/{t} \right)\, {\xi^{\gamma}}$. Using that
	\begin{align*}
		\left| \partial_{\xi}^{\alpha} G(\xi,y,z,t)  \right| &\leq {t^{\lfloor \delta \rfloor}} \sum_{|\beta|\leq|\alpha|}{\left|\partial_{\xi}^{\beta} p(z,\xi) \right| \, \left|\partial_{\xi}^{\alpha - \beta} \left[ \psi \left( \frac{|\xi|}{t} \right) \left( \frac{\xi}{t} \right)^{\gamma} \,\, \right]  \right|} \nonumber \\
		&\leq \sum_{|\beta|\leq|\alpha|} |\xi|^{m-\sigma |\beta|} \, t^{|\beta|-|\alpha|{+\lfloor \delta \rfloor}} \nonumber \\
		&\leq \sum_{|\beta|\leq|\alpha|}{t^{m-\sigma|\alpha|{+\lfloor \delta \rfloor}} \, t^{(1-\sigma)(|\beta|-|\alpha|)}} \lesssim t^{m-\sigma|\alpha|{+\lfloor \delta \rfloor}}  
	\end{align*}
	we get
	\begin{align*}
		\| g_2(\cdot,y,z,t) \, (x-z)^{\alpha}t^{\sigma |\alpha|} \|_{L^2[C_j(z,r^{\rho})]} &\leq  \| t^{\sigma |\alpha|} \widehat{\partial_{\xi}^{\alpha}G}(\cdot-z,y,z,t) \|_{L^2} \\
		&= \| t^{\sigma |\alpha|} \partial_{\xi}^{\alpha}G(\cdot-z,y,z,t) \|_{L^2} \\
		&\lesssim  t^{\sigma |\alpha|} \, t^{m-\sigma |\alpha|+\frac{n}{2} {+\lfloor \delta \rfloor}} \lesssim t^{\frac{\sigma n}{2}{+\lfloor \delta \rfloor}}
	\end{align*}
	since $m \leq -(1-\sigma)n/2$. On the other hand, the same estimate for $g_1$ is valid. Indeed
	\begin{align*}
		&\| g_1(\cdot,y,z,t) \|_{L^2[C_j(z,r^{\rho})]} \\
		& \quad \quad \quad \quad \quad \quad \quad \leq \left( \int{|g_1(x,y,z,t)|^{2} \, [(x-y)^{2}t^{2\sigma}]^{N}dx} \right)^{\frac{1}{2}} 
		\sup_{x \in C_j(z,r^{\rho})}{[(x-y)t^{\sigma}]^{-N}}.
	\end{align*} 
	The control of the integral is analogous as in the previous case and for supremum term note that since $r<1$, $x \in C_j(z,r^{\rho})$ and $|y-z|<r$ we get $|x-y|>2^{j-1} r^{\rho}$ and thus
	\begin{align*}
		\sup_{x \in C_j(z,r^{\rho})}{[(x-y)t^{\sigma}]^{-N}} \leq (2^{j-1}r^{\rho}t^{\sigma})^{-N}.
	\end{align*}
	From that point we proceed as in \eqref{ineq-1} and obtain the desired estimates for $g$. The same argument applies to $f$ if we split
	\begin{align*}
		f(x-y,y,z,t)= & \int{e^{-i(x-y) \cdot \xi}p(y,\xi) \psi \left( \frac{|\xi|}{t} \right) {\, \xi^{\gamma} \,} d\xi} \\
		& \quad  -\int{e^{-i(x-y) \cdot \xi}p(z,\xi) \psi \left( \frac{|\xi|}{t} \right) {\, \xi^{\gamma} \,}d\xi}
	\end{align*}
	and the estimate follows exactly in the same way as did for $g$.
	
	The case $r \geq 1$ is analogous as the previous and we obtain that
	\begin{align*}
		\| g(\cdot, y, z, t) \|_{L^2[C_j(z,r)]} \lesssim t^{\frac{\sigma n}{2} {+\lfloor \delta \rfloor}} \, (2^j r t^{\sigma})^{-N}
	\end{align*}
	and
	\begin{align*}	
	 \| f(\cdot-y, y, z, t) \|_{L^2[C_j(z,r)]} \lesssim t^{\frac{\sigma n}{2} {+\lfloor \delta \rfloor} } \, (2^j r t^{\sigma})^{-N}.
	\end{align*} 
	Thus, the desired estimate follows as in \eqref{estimate-rgreater1}.
\end{proof}

\subsection{Strongly singular $\theta$-Calder\'on-Zygmund operator of type $\sigma$} \label{section-yabuta-operators}

K. Yabuta considered in \cite[Definition 2.1]{Yabuta1985} a generalization of classical Calder\'on-Zygmund operators assuming a $\theta$-modulus of continuity of the kernel 
and a complete study on boundedness ($L^p-L^p$ for $1<p<\infty$, $L^{\infty}-BMO$ and $H^1-L^1$) of this type of operators. Kernels satisfying $\theta$-modulus of continuity are related with general classes of pseudodifferential operators (beyond H\"ormander class), see for instance \cite[Theorems 3.1 and 3.2]{Yabuta1985}.


In this short subsection we introduce a generalization of strongly  singular Calder\'on-Zygmund operators of type $\sigma$ assuming an analogous $\theta$-modulus of continuity of the kernel. This has its own interests and can lead to new paths in connection to pseudo-differential operators associated to rough symbols. 


\begin{definition}
	Let $\theta: (0,\infty) \rightarrow (0, \infty)$ be an increasing function and $0<\sigma \leq 1$. 
	We say that a continuous function $K(x,y)$ defined on $\R^{2n}$ away the diagonal is a $\theta-$kernel of type $\sigma$ if 
	\begin{equation} 
		|K(x,y)| \lesssim \frac{1}{|x-y|^{n}} \quad \, \forall \ x \neq y
	\end{equation}
	and
	\begin{equation} \label{kernel-yabuta}
		|K(x,y)-K(x,z)|+|K(y,x)-K(z,x)| \lesssim \theta \left( \frac{|y-z|}{|x-z|^{\frac{1}{\sigma}}} \right) \, |y-z|^{-n}
	\end{equation}
	for all $|x-z| \geq 2|y-z|^{\sigma}$. A linear and bounded operator $T: \mathcal{S}(\R^{n}) \rightarrow \mathcal{S}'(\R^{n})$ is called a strongly singular $\theta$-Calder\'on-Zygmund operator of type $\sigma$ if it satisfies conditions (i) and (iii) of Theorem \ref{teorema_s-hormander}.
\end{definition}

Clearly if $\theta(t)=t^{\delta}$ for some $0<\delta \leq 1$, then we recover \eqref{pontual_kernel_tipo_sigma} and if one consider the Dini-condition $$\int_{0}^{1} \frac{\theta(t)}{t} dt < \infty$$ then we recover \eqref{hormander} with $\delta=1$.

\begin{theorem}
	Let $0<p\leq 1$ and $T$ a strongly singular $\theta-$Calder\'on-Zygmund operator of type $\sigma$. If $T^{\ast}(x^{\alpha})=0$ for every $|\alpha| \leq \lfloor \delta \rfloor$ and
	\begin{equation} \label{epsilon-yabuta}
		\int_{0}^{1}{\frac{\left[\theta (t)\right]^{s_1}}{t^{1+\delta s_1}}dt} < \infty
	\end{equation}
	for some $\delta>0$ and $1\leq s_1 <\infty$ with $p<s_1$, then $T$ is a bounded operator on $H^p(\R^n)$ to itself for $p_{_0}<p\leq1$, in which $p_{_0}$ is given by \eqref{pcritico}.
\end{theorem}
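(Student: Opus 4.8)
The plan is to deduce the statement from Theorem~\ref{teorema_s-hormander} by proving that, under the Dini-type hypothesis \eqref{epsilon-yabuta}, a $\theta$-kernel of type $\sigma$ is automatically a $\delta$-kernel of type $\sigma$, i.e.\ satisfies the pointwise estimate in \eqref{pontual_kernel_tipo_sigma} with decay exponent $\delta$. All the substance is in this reduction; once it is in hand one just invokes Theorem~\ref{teorema_s-hormander}.

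\emph{Step 1 (a pointwise bound for $\theta$).} The first step is to show that \eqref{epsilon-yabuta} together with the monotonicity of $\theta$ forces $\theta(t)\lesssim t^{\delta}$ for all $0<t\le1$. For $0<t\le\tfrac12$, since $\theta$ is increasing,
\[
\int_{0}^{1}\frac{[\theta(u)]^{s_1}}{u^{1+\delta s_1}}\,du\;\ge\;\int_{t}^{2t}\frac{[\theta(u)]^{s_1}}{u^{1+\delta s_1}}\,du\;\ge\;[\theta(t)]^{s_1}\int_{t}^{2t}\frac{du}{u^{1+\delta s_1}}\;=\;\frac{1-2^{-\delta s_1}}{\delta s_1}\,[\theta(t)]^{s_1}\,t^{-\delta s_1},
\]
so finiteness of the left-hand side gives $\theta(t)\le C\,t^{\delta}$, while for $\tfrac12<t\le1$ the bound is immediate since $\theta(t)\le\theta(1)\lesssim t^{\delta}$.

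\emph{Step 2 (reduction and conclusion).} On the region $|x-z|\ge2|y-z|^{\sigma}$ the argument of $\theta$ in \eqref{kernel-yabuta} is at most $2^{-1/\sigma}<1$, so Step~1 applies there and \eqref{kernel-yabuta} yields
\[
|K(x,y)-K(x,z)|+|K(y,x)-K(z,x)|\;\lesssim\;\frac{|y-z|^{\delta}}{|x-z|^{\,n+\delta/\sigma}},
\]
which is the pointwise estimate of \eqref{pontual_kernel_tipo_sigma} with the (now arbitrary) decay exponent $\delta>0$ — consistently with the remark that $\theta(t)=t^{\delta}$ recovers \eqref{pontual_kernel_tipo_sigma}. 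The same computation that shows a $\delta$-kernel of type $\sigma$ obeys the $D_{s}$ condition — splitting the annuli $C_j(z,r)$, $C_j(z,r^{\rho})$ and using $|y-z|<r$ together with $0<\rho\le\sigma\le1$ — then shows that $K$ satisfies the $D_{s_1}$ condition with decay $\delta$. Hence $T$ meets all the hypotheses of Theorem~\ref{teorema_s-hormander}: (i) and (iii) hold by the definition of a strongly singular $\theta$-Calder\'on--Zygmund operator of type $\sigma$, (ii) by the previous line, the cancellation $T^{\ast}(x^{\alpha})=0$ for $|\alpha|\le\lfloor\delta\rfloor$ is assumed, and the remaining parameter conditions ($p<s_1\le s_2$, together with either $1<s_1\le2$, or condition (iii) also for $T^{\ast}$, or $s_1=1$ with $p<1$) are exactly those under which the statement is phrased. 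Theorem~\ref{teorema_s-hormander} then gives the boundedness of $T$ on $H^{p}(\R^{n})$ for $p_{_0}<p\le1$, with $p_{_0}$ as in \eqref{pcritico}.

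The only genuinely non-formal point is Step~1 — passing from the \emph{averaged} condition \eqref{epsilon-yabuta} to a \emph{pointwise} $\delta$-H\"older-type control of $\theta$ — and monotonicity of $\theta$ is essential, both for the telescoping lower bound above and because without it \eqref{epsilon-yabuta} carries no pointwise information at all. After Step~1 nothing new is required; the only care needed in Step~2 is to keep the admissible parameters inherited from Theorem~\ref{teorema_s-hormander} mutually consistent.
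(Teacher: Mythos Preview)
Your argument is correct, but it takes a genuinely different route from the paper's own proof. The paper does \emph{not} reduce to Theorem~\ref{teorema_s-hormander}; instead it repeats the molecular scheme of that proof directly for the $\theta$-kernel: after the same (M1)/(M3) estimates, it bounds $|Ta(x)|\lesssim r^{n(1-1/p)}\theta\bigl(r/|x-z|^{1/\sigma}\bigr)|x-z|^{-n}$, changes variables in the annular integral to produce $\int_0^1 [\theta(t)]^{s_1}t^{-1-\delta s_1}\,t^{-\sigma\lambda+\sigma n(s_1-1)+\delta s_1}\,dt$, and then uses $\lambda<n(s_1-1)+\delta s_1/\sigma$ so that the extra power of $t$ is nonnegative and \eqref{epsilon-yabuta} can be invoked as stated. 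Your Step~1, by contrast, extracts from \eqref{epsilon-yabuta} and monotonicity the pointwise bound $\theta(t)\lesssim t^{\delta}$, after which the kernel is literally a $\delta$-kernel of type $\sigma$ and Theorem~\ref{teorema_s-hormander} applies verbatim.

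Your route is shorter and makes a point the paper does not make explicitly: under monotonicity, the integral hypothesis \eqref{epsilon-yabuta} already forces the pointwise power behaviour, so the theorem is a strict corollary of Theorem~\ref{teorema_s-hormander}. A side effect is that once $\theta(t)\lesssim t^{\delta}$ is known, the $D_s$ condition holds for \emph{every} $s\ge1$, so the particular $s_1$ in \eqref{epsilon-yabuta} no longer matters and you may simply apply Theorem~\ref{teorema_s-hormander} with (say) $s_1'=\min\{2,s_2\}$; this cleans up the ``remaining parameter conditions'' you allude to at the end of Step~2. The paper's direct computation, on the other hand, uses the integral \eqref{epsilon-yabuta} itself rather than its pointwise consequence, and that is the approach that survives the weakening described in the Remark immediately following the theorem (annular Dini-type conditions on $\theta$ that imply the $D_{s_1}$ condition without forcing a global bound $\theta(t)\lesssim t^{\delta}$).
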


Conditions like \eqref{epsilon-yabuta} have already been considered in the literature to obtain boundedness of standard $\theta-$Calder\'on-Zygmund operators. For instance, see \cite[Theorem 1.2]{Luong2011}, where the same condition with $\sigma=s_1=1$ has been used in the setting of weighted Hardy spaces and also \cite[Theorems 8 and 9]{YangQuek2000}, for a similar one in weak-Hardy spaces. Conditions like $\int_{0}^{1}\frac{\left[\theta (t)\right]^{a}}{t}dt<\infty$ for $a>0$ have also been considered in the literature (see \cite{LuZhang2014} and their cited papers) and is usually referred as $a-$Dini condition.

\begin{proof}
	Let $a(x)$ be a $(p, \infty)$-atom supported on $B(z,r)$. We show that $Ta$ is a $(p,\rho, q , \lambda,s_2,s_1)-$molecule like in the proof of Theorem \ref{teorema_s-hormander}. Since conditions (M1) and (M3) relies only on the continuity properties of $T$, and not on the regularity of the kernel itself, the proofs will be the same. Suppose first $r>1$ and we show (M2). Since $\theta$ is increasing and by \eqref{kernel-yabuta} it follows that
	\begin{align}
		|Ta(x)| & \leq \int_{B(z,r)}{|K(x,y)-K(x,z)| \ |a(y)|dy} \lesssim r^{\,n\left(1-\frac{1}{p}\right)} \theta \left( \frac{r}{|x-z|^{\frac{1}{\sigma}}} \right)  |x-z|^{-n}. \nonumber
	\end{align}
	Therefore
	\begin{align}
		\int_{\R^n \setminus B(z,2r)}{|Ta(x)|^{s_1}|x-z|^\lambda dx} 
		& \leq r^{\, s_1 n\left(1-\frac{1}{p} \right)} \int_{\R^n \setminus B(z,2r)}{\left[ \theta \left( \frac{(2r)^{\frac{1}{\sigma}}}{|x-z|^{\frac{1}{\sigma}}} \right) \right]^{s_1} |x-z|^{\lambda-s_1n}dx} \nonumber \\
	& = r^{\, \lambda+n\left(1-\frac{s_1}{p}\right)} \int_{|w|>1}{\left[\theta \left( |w|^{-\frac{1}{\sigma}}\right)\right]^{s_1} \, |w|^{\lambda-s_1n}dw} \nonumber \\
		& = r^{\,\lambda+n\left(1-\frac{s_1}{p}\right)} \int_{1}^{\infty}{\left[ \theta \left( u^{-\frac{1}{\sigma}}\right) \right]^{s_1} u^{\,\lambda-n(s_1-1)-1}du} \nonumber \\
		& \lesssim r^{\,\lambda+n\left(1-\frac{2}{p}\right)} \int_{0}^{1}{\frac{\left[\theta(t)\right]^{s_1}}{t^{1+\delta s_1}}t^{-\sigma \lambda+\sigma n (s_1-1)+\delta s_1}dt} \nonumber \\
		& \lesssim r^{\,\lambda+n\left(1-\frac{s_1}{p}\right)}, \nonumber
	\end{align}
	using condition \eqref{epsilon-yabuta} and $\lambda <n(s_1-1)+\frac{\delta s_1}{\sigma}$, which is valid and was already pointed out in \eqref{control-lambda}. For $r<1$,
	\begin{align}
		\int_{\R^n \setminus B(z,2r^{\rho})} &|Ta(x)|^{s_1}|x-z|^\lambda dx \lesssim r^{\,s_1n \left(1-\frac{1}{p} \right)} \int_{\R^n \setminus B(z,2r^{\rho})}{\left[\theta \left( \frac{r}{|x-z|^{\frac{1}{\sigma}}} \right)\right]^{s_1} |x-z|^{\lambda -s_1n}dx} \nonumber \\
		& = r^{\,s_1n\left(1-\frac{1}{p}\right)+\rho\left[\lambda - n(s_1-1)\right]} \int_{|w|>1}{\left[\theta \left( \frac{r^{\,1-\frac{\rho}{\sigma}}}{|w|^{\frac{1}{\sigma}}} \right) \right]^{s_1} |w|^{\lambda-s_1n}dw} \nonumber \\
		& =  r^{\,s_1n\left(1-\frac{1}{p}\right)+\sigma(\lambda-ns_1+n)+\rho-1} \int_{0}^{r^{1-\frac{\rho}{\sigma}}}{\frac{\left[ \theta(t) \right]^{s_1}}{t^{1+\delta s_1}} t^{-\sigma\lambda+\sigma n(s_1-1)+\delta s_1}dt} \nonumber \\
		&  \lesssim r^{\,\rho\lambda + n \left[ \rho \left(1-\frac{s_1}{s_2} \right)+s_1 \left(\frac{1}{q}-\frac{1}{p}\right) \right]}, \nonumber
	\end{align}
where in the last integral we estimate $t \leq r^{1-\frac{\rho}{\sigma}}$ and we choose  $\rho$ as in \eqref{choice-rho}.
\end{proof}

\begin{remark}
\textnormal{Condition \eqref{epsilon-yabuta} can be refined for one related to \eqref{s-hormander-1} and \eqref{s-hormander-2}. Let $I=(2^{-\frac{1}{\sigma}},1)$, $I_{j}^{\rho}=  r^{1-\frac{\rho}{\sigma}} 2^{-\frac{j}{\sigma}} \times I$ and $I_j = r^{1-\frac{1}{\sigma}} 2^{-\frac{j}{\sigma}} \times I$. Then  $\theta-$kernels of type $\sigma$ such that}
	$$	\left(\int_{I_{j}^{\rho}} \frac{[\theta(t)]^{s_1}}{t} dt \right)^{\frac{1}{s_{1}}} \lesssim  | I_{j}^{\rho}|^{\delta} \textnormal{ if } r<1 \textnormal{ and } \left(\int_{I_{j}} \frac{[\theta(t)]^{s_1}}{t} dt \right)^{\frac{1}{s_{1}}} \lesssim (2^{j})^{-\delta} \textnormal{ if } r>1.
	$$	
\textnormal{satisfy the $D_{s_1}$ condition.}
\end{remark}

\addcontentsline{toc}{chapter}{Bibliography}
\bibliographystyle{amsplain}
\bibliography{referencial}

\providecommand{\bysame}{\leavevmode\hbox to3em{\hrulefill}\thinspace}
\providecommand{\MR}{\relax\ifhmode\unskip\space\fi MR }
\providecommand{\MRhref}[2]{%
  \href{http://www.ams.org/mathscinet-getitem?mr=#1}{#2}
}
\providecommand{\href}[2]{#2}
\begin{thebibliography}{10}

\bibitem{AlvarezHounie}
J.~{\'A}lvarez and J.~Hounie, \emph{Estimates for the kernel and continuity
  properties of pseudo-differential operators}, {A}rkiv f{\"o}r {M}atematik
  \textbf{28} (1990), no.~1, 1--22.

\bibitem{AlvarezMilman}
J.~{\'A}lvarez and M.~Milman, \emph{{$H^p$} continuity properties of
  {C}alder\'{o}n-{Z}ygmund-type operators}, Journal of {M}athematical
  {A}nalysis and {A}pplications \textbf{118} (1986), no.~1, 63--79.

\bibitem{AlvarezMilmanVectorValued}
\bysame, \emph{Vector valued inequalities for strongly singular
  {C}alder\'{o}n-{Z}ygmund operators}, {R}evista {M}atem\'{a}tica
  {I}beroamericana \textbf{2} (1986), 405--426.

\bibitem{Bordin}
B.~Bordin, \emph{${H}^{p}$ estimates for weakly strongly singular integral
  operators on spaces of homogeneous type}, Studia {M}athematica \textbf{75}
  (1983), no.~2, 217--234.

\bibitem{DeFrancia19867}
J.~de~Francia, F.~Ruiz, and J.~Torrea, \emph{Calder\'on-{Z}ygmund theory for
  operator-valued kernels}, Advances in {M}athematics \textbf{\textbf{62}}
  (1986), no.~1, 7 -- 48.

\bibitem{JavierFourier}
J.~Duoandikoetxea, \emph{Fourier {A}nalysis}, Crm {P}roceedings \& {L}ecture
  {N}otes, American {M}athematical {S}oc., 2001.

\bibitem{Fefferman1970}
C.~Fefferman, \emph{Inequalities for strongly singular convolution operators},
  Acta {M}athematica \textbf{124} (1970), no.~1, 9--36.

\bibitem{FeffermanStein1972}
C.~Fefferman and E.~Stein, \emph{${H}^p$ spaces of several variables}, Acta
  {M}athematica \textbf{129} (1972), no.~1, 137--193.

\bibitem{GarciaFranciaWeighted}
J.~Garc{\'\i}a-Cuerva and J.~de~Francia, \emph{Weighted norm inequalities and
  related topics}, {A}nnals of {D}iscrete {M}athematics, no.
  N{\textordmasculine} 116, {N}orth-{H}olland, 1985.

\bibitem{Hirschman1959}
I.~Hirschman, \emph{On multiplier transformations}, Duke {M}athematical
  {J}ournal \textbf{26} (1959), no.~2, 221--242.

\bibitem{Hounie1986}
J.~Hounie, \emph{On the ${L}^2$ continuity of pseudo-differential operators},
  Communications in {P}artial {D}ifferential {E}quations \textbf{11} (1986),
  no.~7, 765--778.

\bibitem{Luong2011}
L.~D. Ky, \emph{A note on ${H}^{p}_{w}-$boundedness of {R}iesz transforms and
  $\theta-${C}alder\'on-{Z}ygmund operators through molecular
  characterization}, Analysis in {T}heory \& {A}pplications \textbf{27} (2011),
  no.~3, 251--264.

\bibitem{LiLu2006}
J.~F. Li and S.~Z. Lu, \emph{Strongly {S}ingular {I}ntegral {O}perators on
  {W}eighted {H}ardy {S}pace}, Acta {M}athematica {S}inica \textbf{22} (2006),
  no.~3, 767--772.

\bibitem{LuZhang2014}
G.~Lu and P.~Zhang, \emph{Multilinear {C}alder\'on-{Z}ygmund operators with
  kernels of {D}ini's type and applications}, Nonlinear {A}nalysis \textbf{107}
  (2014), 92--117.

\bibitem{MeyerCoifman1997}
Y.~Meyer and R.~Coifman, \emph{{W}avelets: {C}alder{\'o}n-{Z}ygmund and
  multilinear operators}, Cambridge Studies in Advanced Mathematics, Cambridge
  University Press, 1997.

\bibitem{YangQuek2000}
T.~Quek and D.~Yang, \emph{Calder\'on-{Z}ygmund-type operators on weighted weak
  {H}ardy spaces over $\mathbb{R}^n$}, Acta {M}athematica {S}inica \textbf{16}
  (2000), no.~1, 141--160.

\bibitem{SteinHarmonic}
E.~Stein, \emph{{H}armonic {A}nalysis: real-variable methods, orthogonality,
  and oscillatory integrals}, Monographs in {H}armonic {A}nalysis, Princeton
  {U}niversity {P}ress, 1993.

\bibitem{strombergWeighted1989}
J.~O. Str{\"o}mberg and A.~Torchinsky, \emph{Weighted {H}ardy {S}paces},
  Lecture {N}otes in {M}athematics, Springer {B}erlin {H}eidelberg, 1989.

\bibitem{Taibleson-Weiss}
M.~Taibleson and G.~Weiss, \emph{The molecular characterization of certain
  {H}ardy spaces}, Ast\'erisque, no.~77, Soci\'et\'e {M}ath\'ematique de
  {F}rance, 1980, pp.~67--149.

\bibitem{Wainger1965}
S.~Wainger, \emph{Special trigonometric series in $k$ dimensions}, Memoirs of
  the American Mathematical Society, no.~59, American {M}athematical {S}ociety,
  1965.

\bibitem{Yabuta1985}
K.~Yabuta, \emph{Generalizations of {C}alder\'on-{Z}ygmund operators}, Studia
  {M}athematica \textbf{82} (1985), 17--31.

\bibitem{YangYanDeng-Hormandercondition}
Q.~Yang, L.~Yan, and D.~Deng, \emph{On {H}{\"o}rmander condition}, {C}hinese
  {S}cience {B}ulletin \textbf{42} (1997), no.~16, 1341--1345.

\end{thebibliography}

\end{document}